\numberwithin{equation}{section}
\newcommand{\bR}{{\mathbb R}}
\newcommand{\bZ}{{\mathbb Z}}
\newcommand{\beq}{\begin{equation}}
\newcommand{\bEq}{\end{equation}}
\newcommand{\bx}{{\bf{x}}}
\newcommand{\bu}{{\bf{u}}}
\newcommand{\bv}{{\bf{v}}}
\newcommand{\bw}{{\bf{w}}}
\newcommand{\al}{\alpha}
\newcommand{\be}{\begin{equation}}
\newcommand{\ee}{\end{equation}}
\newcommand{\e}{{\varepsilon}}
\renewcommand{\b}[1]{\bm{\mathrm{#1}}} %bold
\renewcommand{\cal}{\mathcal}
\newcommand{\wt}{\widetilde}
\newcommand{\ii}{\mathrm{i}} %\newcommand{\mi}{\mathrm{i}}
\newcommand{\deq}{\mathrel{\mathop:}=}
\renewcommand{\epsilon}{\varepsilon}
\renewcommand{\leq}{\leqslant}
\renewcommand{\geq}{\geqslant}
\renewcommand{\le}{\leq}
\renewcommand{\ge}{\geq}
\renewcommand{\P}{\mathbb{P}}
\newcommand{\E}{\mathbb{E}}
\newcommand{\R}{\mathbb{R}}
\newcommand{\C}{\mathbb{C}}
\newcommand{\N}{\mathbb{N}}
\newcommand{\Z}{\mathbb{Z}}
\newcommand{\abs}[1]{\lvert #1 \rvert}
\DeclareMathOperator{\var}{Var}
\DeclareMathOperator{\re}{Re}
\DeclareMathOperator{\im}{Im}
\DeclareMathOperator{\OO}{O}
\theoremstyle{plain} %plain, definition, remark
\newtheorem{theorem}{Theorem}[section]
\newtheorem*{theorem*}{Theorem}
\newtheorem{lemma}[theorem]{Lemma}
\newtheorem*{lemma*}{Lemma}
\newtheorem*{corollary*}{Corollary}
\newtheorem*{proposition*}{Proposition}
\newtheorem{definition}[theorem]{Definition}
\newtheorem*{definition*}{Definition}
\newtheorem*{example*}{Example}
\newtheorem{remark}[theorem]{Remark}
\newtheorem*{remark*}{Remark}
\newtheorem*{remarks*}{Remarks}
\renewcommand{\subsection}{\@startsection
{subsection}%                   % the name
{2}%                         % the level
{0mm}%                       % the indent
{-\baselineskip}%            % the before skip
{0 \baselineskip}%          % the after skip
{\normalfont\bf\itshape}} % the style
\newcommand{\nc}{\normalcolor}
\def\bR{{\mathbb R}}
\def\bZ{{\mathbb Z}}
\renewcommand{\b}[1]{\boldsymbol{\mathrm{#1}}} %bold
\def\@empty{}
\def\author#1{\par
    {\centering{\authorfont#1}\par\vspace*{0.05in}}
}
\def\titlefont{\fontsize{13}{15}\bfseries\boldmath\selectfont\centering{}}
\def\authorfont{\fontsize{13}{15}}
\def\abstractfont{\fontsize{8}{10}}
\let\affiliationfont\rhfont
\def\address#1{\par
    {\centering{\affiliationfont#1\par}}\par\vspace*{11pt}
}
\def\body{
\setcounter{footnote}{0}
\def\thefootnote{\alph{footnote}}
\def\@makefnmark{{$^{\rm \@thefnmark}$}}
}
\def\title#1{
    \thispagestyle{plain}
    \vspace*{-14pt}
    \vskip 79pt
    {\centering{\titlefont #1\par}}%
    \vskip 1em
}
\renewenvironment{abstract}{\par%
    \vspace*{6pt}\noindent %{\bf Abstract}
    \abstractfont
    \noindent\leftskip18pt\rightskip18pt
}{%
  \par}
\renewcommand{\b}[1]{\boldsymbol{\mathrm{#1}}} %bold
\renewcommand{\section}{\@startsection
{section}%                   % the name
{1}%                         % the level
{0mm}%                       % the indent
{-2\baselineskip}%            % the before skip
{2\baselineskip}%          % the after skip
{\normalfont\large\scshape\centering}} % the style
\newcommand{\tnorm}[1]{{\left\vert\kern-0.25ex\left\vert\kern-0.25ex\left\vert #1 
    \right\vert\kern-0.25ex\right\vert\kern-0.25ex\right\vert}}
\begin{document}

\title{Random band matrices in the delocalized phase, II:\\  Generalized resolvent estimates}

{\let\thefootnote\relax\footnotetext{\noindent The work of P.B. is partially supported by the NSF grant DMS\#1513587. 
The work of H.-T. Y. is partially supported by NSF Grant  DMS-1606305 and a Simons Investigator award.
The work of  J.Y. is partially supported by the NSF grant DMS\#1552192.}}

\vspace{1.2cm}
~\hspace{-1cm}
\noindent\begin{minipage}[b]{0.25\textwidth}

 \author{P. Bourgade}

\address{Courant Institute\\
   bourgade@cims.nyu.edu}
 \end{minipage}
 \begin{minipage}[b]{0.25\textwidth}

\author{F. Yang }
\address{U. of California, Los Angeles\\
   fyang75@math.ucla.edu}
 \end{minipage}
\begin{minipage}[b]{0.25\textwidth}

 \author{H.-T. Yau}

\address{Harvard University\\
   htyau@math.harvard.edu}

 \end{minipage}
\begin{minipage}[b]{0.25\textwidth}

 \author{J. Yin}

\address{U. of California, Los Angeles\\
    jyin@math.ucla.edu}

 \end{minipage}

\begin{abstract}
This is the second part of a three part series abut delocalization for band matrices. In this paper, we consider a general class of $N\times N$ random band matrices $H=(H_{ij})$ whose entries are centered random variables, independent up to a symmetry constraint. We assume that the variances $\mathbb E |H_{ij}|^2$ form a band matrix with typical band width $1\ll W\ll N$. We consider the generalized resolvent of $H$ defined as $G(Z):=(H - Z)^{-1}$, where $Z$ is a deterministic diagonal matrix such that $Z_{ij}=\left(z\mathds{1}_{1\le i \le W}+\wt z\mathds{1}_{ i > W} \right) \delta_{ij}$, with two distinct spectral parameters $z\in \mathbb C_+:=\{z\in \mathbb C:\im z>0\}$ and $\wt z\in \mathbb C_+\cup \mathbb R$. In this paper, we prove a sharp bound for the local law of the generalized resolvent $G$ for $W\gg N^{3/4}$. This bound is a key input for the proof of delocalization and
bulk universality of random band matrices in \cite{PartI}. Our proof depends on a fluctuations averaging bound on certain averages of polynomials in the resolvent entries, which will be proved in \cite{PartIII}.
\end{abstract}

\tableofcontents

\section{The model and the results.}

\subsection{The model.}\ Our {\rm goal} in this  paper is to establish estimates on Green's functions which  were used in the proof of delocalization conjecture and
bulk universality for random band matrices.  All results in this paper apply to both real and complex band matrices. For simplicity of notations, we consider only the real symmetric case. 
Random band matrices are  characterized by the property that the matrix element $H_{ij}$   becomes negligible if $\mbox{dist}(i,j)$ exceeds the band width $W$.  We shall restrict ourselves to the convention that $i,j \in \Z_N=\Z \cap (-N/2, N/2],$ and $i-j$ is defined modular $N$. More precisely, we consider the following matrix ensembles.

\begin{definition}[Band matrix $H_N$ with bandwidth $W_N$]\label{jyyuan}
Let $H_N$ be an $N\times N$ matrix with real centered entries ($H_{ij}$: $i,j\in \Z_N)$ which are  
independent up to the condition $H_{ij}= H_{ji}$.  We say that $H_N$ is a random band matrix with (typical) bandwidth $W=W_N$ if
\be\label{bandcw0}
s_{ij} :=\E |H _{ij}|^2  =f(i-j)
\ee
for some non-negative symmetric function $f: \Z_N\to \R_+$ satisfying 
\begin{equation}\label{sumsone}
\sum_{x\in\mathbb{Z}_N} f(x)=1, 
\end{equation}
and there exist some (small) positive constant $c_s$ and (large) positive constant $C_s$ such that 
\begin{equation}\label{bandcw1}
c_s\, W^{-1} \cdot \mathds{1} _{|x| \le   W}\le  f(x)  \le C_s \,W^{-1} \cdot \mathds{1} _{|x| \le C_s W}, \quad i,j\in \Z_N.
\end{equation}
\end{definition}   

The method in this paper also allows to treat cases with 
exponentially small mass away from the band width (e.g. $f(x)\leq C_s W^{-1}e^{-c_s{|x|^2}/{W^2}}$). We work under the hypothesis (\ref{bandcw1}) mainly for simplicity.

\vspace{5pt}

We assume that the random variables $H_{ij}$ have arbitrarily high moments, in the sense that for any fixed $p\in \mathbb N$, there is a constant $\mu_p>0$ such that
\begin{equation}\label{eqn:subgaus}
\max_{i,j}\left(\mathbb E|H_{ij}|^p\right)^{1/p} \le \mu_p \var \left(H_{ij}\right)^{1/2}
\end{equation}
uniformly in $N$. 

In this paper, we will {\it not} need the following moment condition assumed in Part I of this series \cite{PartI}:  there is fixed $\e_m>0$ such that for $|i-j|\le W$,  
$
\min_{  |i-j|\le W}\left(\E \,\xi^4_{ij}-(\E \,\xi^3_{ij})^2-1 \right)\ge N^{-\e_m},
$
where $ \xi _{ij}:= H_{ij} (s_{ij})^{-1/2}$  is the normalized random variable with mean zero and variance one.

All the results in this paper will  depend  on the parameters $c_s$, $C_s$ in \eqref{bandcw1}  and $\mu_p$ in \eqref{eqn:subgaus}.  But we will not track the dependence on $c_s$, $C_s$  and $\mu_p$ in the proof.

Denote the eigenvalues of $H_N$ by
$
\lambda_1\leq\dots\leq \lambda_N.
$
It is well-known that the empirical spectral measure $\frac{1}{N}\sum_{k=1}^N\delta_{\lambda_k}$ converges
almost surely to the Wigner semicircle law with density
$$
\rho_{\rm sc}(x)=\frac{1}{2\pi}\sqrt{(4-x^2)_+}.
$$
The aim of this paper is to estimate  ``the generalized  resolvent"   $G(  z,\wt z\, )$ of $H_N$  defined by 
\be\label{defGzetag0}
  G(z,\wt z\, ):=  
\left(H_N- \begin{pmatrix}  zI_{W\times W}  & 0 \cr 0 & \wt z I_{(N- W)\times (N- W)}  \end{pmatrix} \right)^{-1}, \quad  z,\; \wt z\in \C^+\cup \R,
 \ee
where $\C^+$ denotes the upper half complex plane $\C^+:=\{z\in\C:\im z>0\}$. The generalized  resolvent is an important quantity used in Part I of this series \cite{PartI}. The key point of this generalization, compared with the usual resolvent, is the freedom to choose different $z$ and $\wt z$. To the best of our knowledge, the local law for this type of generalized resolvent has only been studied in the preceding paper \cite{BouErdYauYin2017}, where it was assumed that $W\ge cN$ for some constant $c >0$.

To understand the role of the  generalized  resolvent,
we block-decompose  the band matrix   $H_N$ and its eigenvectors as
$$
   H_N=  \begin{pmatrix} A  & B^* \cr B & D  \end{pmatrix}, \quad 
   \b\psi_j:=   \begin{pmatrix}\b w_j \cr \b p_j \end{pmatrix},
$$
where $A$ is a $W\times   W$ Wigner matrix. From the eigenvector equation $H \b\psi_j = \lambda_j \b\psi_j$, we get
$$
  Q_{\lambda_j}  \bw_j  = \lambda_j  \bw_j, \quad   Q_e :=  A
 - B^* \frac{1}{D-e}B.
$$
Thus $\bw_j$  is an eigenvector of  $Q_e:= A- B^* (D  - e)^{-1}B$  with eigenvalue $\lambda_j$ when   $e= \lambda_j$. 
A key input to the proof of universality and QUE for random band matrices is an estimate on the Green's function of $Q_e$. 
Since some eigenvalues of $D$  can be very close to $e$, the  matrix  $(D  - e)^{-1}$ can be very singular.  It is thus very difficult (if possible) to estimate  the Green's function of $Q_e$ directly. On the other hand, the Green's function of $Q_e$ is just the $W\times W$ minor of the generalized  resolvent  $G(  z, e )$ of $H_N$, which we find to be relatively more doable. 

Due to the need in Part I, we will consider generalized resolvent for a general class of band matrices.
More precisely, we introduce the following Definition \ref{defHg}. Here and throughout the rest of this paper, we will use the notation that for any $a,b\in \Z$, 
$$\llbracket a,b \rrbracket:=[a,b]\cap \Z.$$

 \begin{definition}[Definition of $H_\zeta^{\b g}$]\label{defHg}
For any sufficiently small $\zeta>0$ and any $\b g=(  g_1,   g_2, \cdots,  g_N)\in \R^N$, $H_{\zeta}$ and $H_\zeta^{\b g}$ will denote $N \times N$ real symmetric matrices satisfying the following properties. The entries $(H_\zeta)_{ij}$ are centered and independent up to the symmetry condition, satisfy \eqref{eqn:subgaus}, and have variances
$$
\mathbb E  |(H_{\zeta})_{ij}|^2 =(s_{\zeta})_{ ij}:=s_{ij}-\frac{\zeta (1+\delta_{ij})}{W}{\bf 1}_{ i, j\in\llbracket 1, W\rrbracket },
$$
where $s_{ij}$, $i,j\in \Z_N$, satisfy the conditions in Definition \ref{jyyuan}. 
Then the matrix $H_\zeta^{\b g}$ is defined by
$$
(H_\zeta^{\b g} )_{ij}:= (H_\zeta)_{ij} -  g_i \delta_{ij}. 
$$
 We denote by $S_0$ and  $\Sigma $ the matrices with entries $(S_0)_{ij} = s_{ij}$ and  $\Sigma_{ij} = \frac{ (1+\delta_{ij})}{W}{\bf 1}_{ i, j\in\llbracket 1, W\rrbracket }$, respectively. Then the matrix of variances is
$$S_\zeta := S_0 -\zeta\Sigma, \quad (S_\zeta)_{ij}=(s_\zeta)_{ij}.$$ 
 \end{definition}

\subsection{The results.}\ The generalized resolvent $G_\zeta^{\b g}(  z,\wt z\, )$ of $H_\zeta^{\b g}$ is defined similarly as in \eqref{defGzetag0} by  
$$
  G_\zeta^{\b g}(z,\wt z\, ):=  
\left(H_\zeta^{\b g}- \begin{pmatrix}  zI_{W\times W}  & 0 \cr 0 & \wt z I_{(N- W)\times (N- W)}  \end{pmatrix} \right)^{-1}.
$$
Define $\big((M_\zeta^{\b g})_i(  z, \wt z)\big)_{i=1}^N$ as the solution vector to the system of self-consistent equations 
 \be\label{falvww}
\left((M_\zeta^{\b g})_i(  z, \wt z)\right)^{-1}=-z\mathds{1}_{i\in\llbracket 1,W\rrbracket}- \wt z\mathds{1}_{i\notin\llbracket 1,W\rrbracket}-{g}_i- \sum_{j} (s_{\zeta})_{ ij}(M_\zeta ^{\b g})_j(z,\wt z ), 
 \ee
for $z,\wt z\in \C^+\cup \R$ and $i\in \Z_N$, with the constraint that $$(M_0^{\b 0})_i( \wt z, \wt z\,)=m_{\rm sc} (\wt z+ {\rm i} 0^+ ),$$
 where $m_{\rm sc}$ denotes the Stieltjes transform of the semicircle law 
\be\label{msc}
m_{\rm sc}(z):=\frac{-z+\sqrt{z^2-4}}{2},\quad z\in \C^+.
\ee
(The existence, uniqueness and continuity of the solution is given by Lemma \ref{UE} below.) For simplicity of notations, we denote by  $M_\zeta^{\b g}( z, \wt z)$ the diagonal matrix with entries 
$$(M_\zeta^{\b g})_{ij}: =(M_\zeta^{\b g})_i\delta_{ij}.$$ We will show that $M_\zeta^{\b g}( z, \wt z)$ is the asymptotic limit of the generalized resolvent 
$G_\zeta^{\b g}(  z,\wt z\, )$. 
We now list some properties of  $M_\zeta^{\b g}$  needed for the proof of local law stated in Theorem \ref{LLniu}.  Its proof is delayed to Section \ref{matrix-norm}.

\begin{lemma}\label{UE}
 Assume $|\re \wt z\, |\le 2-\kappa$  and $| \wt z| \le \kappa^{-1}$ for some (small) constant $\kappa>0$. Then there exist constants $ c, C>0$ such that the following statements hold.
 \begin{itemize}
\item (Existence and Lipschitz continuity) If  
\be\label{heiz}
 \zeta+\| \b g\|_\infty+ |z-\wt z|\le c, 
\ee
 then there exist $(M_\zeta^{\b g})_i(z, \wt z)$, $ i \in \bZ_N$, which satisfy \eqref{falvww} and 
\be\label{bony}
 \max_i\left|(M_\zeta^{\b g})_i(z, \wt z)-m_{\rm sc}(\wt z+  {\rm i}0^+ )\right|\le C \left(\zeta+\| \b g\|_\infty+ |z-\wt z|\,\right).
\ee
If, in addition, we have $\zeta'+\| \b g'\|_\infty+ |z'-\wt z\,'| \le c $, then 
\be\label{dozy}
\max_i\left|(M_{\zeta'}^{ \b g'})_i(z', \wt z\,')-(M_\zeta^{\b g})_i(z, \wt z)\right|\le C\left(\| \b g-\b g'\|_\infty +|z'-z|+|\wt z\,'-\wt z|+|\zeta'-\zeta|\right).
\ee

\item (Uniqueness) The solution vector $\big((M_\zeta^{\b g}\big)_i(z, \wt z)\big)_{i=1}^N$ to \eqref{falvww}  is unique under \eqref{heiz} and the constraint 
$$
\max_i\left|(M_\zeta^{\b g})_i(z, \wt z)-m_{\rm sc}(\wt z+ {\rm i}0^+  )\right|\le c.
$$
\end{itemize} 
\end{lemma}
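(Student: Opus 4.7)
My approach is a perturbation argument around the known exact solution at the base point $(\zeta,\b g,z)=(0,\b 0,\wt z)$. At this point, the constant vector $\b m^\star$ with $m^\star_i:=m_{\rm sc}(\wt z+\ii 0^+)$ solves \eqref{falvww}, since $\sum_j s_{ij}=1$ reduces the vector equation to the scalar semicircle identity $m_{\rm sc}^{-1}=-\wt z-m_{\rm sc}$. Linearizing the self-consistent map
\[
F_i(\b m):= m_i^{-1}+z\mathds{1}_{i\in\llbracket 1,W\rrbracket}+\wt z\mathds{1}_{i\notin\llbracket 1,W\rrbracket}+g_i+\sum_j (s_\zeta)_{ij}\,m_j
\]
at $\b m^\star$ (with $\zeta=0$) yields the symmetric circulant stability operator
\[
L:=-m_{\rm sc}^{-2}\,I+S_0,
\]
with eigenvalues $\mu_k=-m_{\rm sc}^{-2}+\hat f(2\pi k/N)$, $k=0,\dots,N-1$, where $\hat f$ is the Fourier transform of the band profile $f$.

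The central analytic step is to establish a quantitative spectral lower bound
\[
\min_{k}|\mu_k|\ge c(\kappa)>0,
\]
uniform in $N$ and $W$. This follows from the explicit form of $m_{\rm sc}(\wt z+\ii 0^+)$ under $|\re\wt z|\le 2-\kappa$ and $|\wt z|\le\kappa^{-1}$, combined with $\mathrm{spec}(S_0)\subseteq[-1,1]$ (from $s_{ij}\ge 0$ and $\sum_j s_{ij}=1$) and the fact that the Perron eigenvalue $\hat f(0)=1$ is simple. Once $L$ is controlled, I set $\b m=\b m^\star+\b v$ and recast \eqref{falvww} as the fixed-point equation
\[
\b v=-L^{-1}\,\b r(\b v;\zeta,\b g,z-\wt z),
\]
with
\[
r_i=(z-\wt z)\mathds{1}_{i\in\llbracket 1,W\rrbracket}+g_i-\zeta\,m_{\rm sc}\sum_j\Sigma_{ij}+\zeta\sum_j\Sigma_{ij}\,v_j+O(\|\b v\|_\infty^2).
\]
For $\zeta+\|\b g\|_\infty+|z-\wt z|$ small enough (depending only on $\kappa$), this map sends the closed $\ell^\infty$-ball of radius $C_0(\zeta+\|\b g\|_\infty+|z-\wt z|)$ around $\b 0$ into itself and contracts there, and Banach's theorem supplies a unique fixed point $\b v$, proving \eqref{bony}. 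The Lipschitz estimate \eqref{dozy} and the uniqueness in the full $c$-ball both follow by subtracting the self-consistent equations for two parameter sets (or two candidate solutions) and inverting the resulting linear operator, which is a small perturbation of $L$ and therefore remains invertible with the same quantitative control.

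The main obstacle I anticipate is extracting an $\ell^\infty\to\ell^\infty$ operator bound on $L^{-1}$ that depends only on $\kappa$, uniformly in $N$ and $W$. The $\ell^2$-invertibility is immediate from the spectral gap, but since $\b g$ is an arbitrary $\ell^\infty$-bounded vector one needs $\sum_j|(L^{-1})_{ij}|\le C(\kappa)$. As $L$ is circulant, this reduces to bounding the $\ell^1$-norm of the inverse Fourier transform of $k\mapsto 1/\mu_k$, which requires exploiting the regularity of $\hat f$ in addition to the spectral gap. In particular, the many eigenvalues $\hat f(2\pi k/N)$ that cluster near $1$ for low-frequency modes $|k|\lesssim N/W$ (the spectral gap of $S_0$ is only of order $(W/N)^2$) must be handled carefully, likely by isolating the rank-one all-ones component, where the relevant scalar factor $1/(1-m_{\rm sc}^{-2})$ is controlled directly from $|\re\wt z|\le 2-\kappa$, and summing the orthogonal part as a geometric series in $m_{\rm sc}^{2}(S_0-\text{rank-one part})$.
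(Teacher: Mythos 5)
Your overall architecture coincides with the paper's: perturb around the constant vector $m_{\rm sc}(\wt z+\ii 0^+)\,{\bf e}_1$ (which solves \eqref{falvww} at $(\zeta,\b g,z)=(0,\b 0,\wt z)$ because $\sum_j s_{ij}=1$), invert the linearization $L=-m^{-2}(1-m^2S_0)$, run a Banach fixed point in a small $\ell^\infty$ ball, and get \eqref{dozy} and uniqueness by subtracting two copies of the equation. The entire content of the lemma, however, is the uniform-in-$N,W$ bound $\|(1-m^2S_0)^{-1}\|_{L^\infty\to L^\infty}\le C(\kappa)$ (the paper's Lemma \ref{lem_m0}, estimate \eqref{dust}), and this is precisely the point you flag as the "main obstacle" and do not actually prove; moreover the route you gesture at is flawed in two concrete ways.

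First, even your $\ell^2$ spectral claim $\min_\theta|1-m^2\hat f(\theta)|\ge c(\kappa)$ does not follow from ${\rm spec}(S_0)\subseteq[-1,1]$ together with simplicity of the Perron eigenvalue: the dangerous collision is not at $+1$ but at $-1$. When $\re\wt z$ is near $0$ one has $m^2$ near $-1$, so one must rule out $\hat f(\theta)\approx-1$, and this requires the lower bound in \eqref{bandcw1} (a mass $\gtrsim c_s$ spread with density $\gtrsim c_s/W$ over the window of width $W$, e.g.\ via a Dirichlet-kernel estimate); without that core condition a profile like $f=\tfrac12(\delta_1+\delta_{-1})$ has $\hat f(\pi)=-1$. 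Conversely, the clustering of eigenvalues near $+1$ that worries you is harmless for the resolvent bound, since $m^2$ stays away from $+1$ in the bulk. Second, and more seriously, the proposed fix for the $L^\infty$ bound --- remove the rank-one all-ones component and sum a geometric series in $m^2(S_0-\text{rank one})$ --- does not work: after deleting the constant mode, all Fourier modes with $|k|\lesssim N/W$ still have $\hat f$ within $o(1)$ of $1$, so the remaining operator has $\ell^2$ norm $1-{\rm O}((W/N)^2)$ and the series yields a bound of order $(N/W)^2$ rather than ${\rm O}(1)$; on $\ell^\infty$ it is not a contraction at all, since $\|m^2S_0\|_{L^\infty\to L^\infty}=|m|^2=1$ when $\wt z$ is real. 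A correct Fourier route would instead combine the uniform multiplier bound with the regularity of $\hat f$ on scale $W^{-1}$ (i.e.\ $|\hat f^{(j)}|\lesssim W^j$) to prove kernel decay of $(1-m^2S_0)^{-1}-I$ on scale $W$ with amplitude ${\rm O}(W^{-1})$, which is what row-summability requires; the paper avoids Fourier analysis altogether and proves directly that $\bigl\|\bigl((m^2S_0+\tau)/(1+\tau)\bigr)^2\bigr\|_{L^\infty\to L^\infty}\le 1-c_1$ (estimate \eqref{gbzz2}) by a phase-alignment argument that again hinges on the core condition \eqref{bandcw1}, and then sums a Neumann series. As written, your contraction scheme therefore rests on an unproved key input, and the specific mechanism you propose for it would fail; the remaining steps (self-mapping, contraction, Lipschitz bound via \eqref{m'}-type continuity of $m_{\rm sc}$, and local uniqueness) are fine and essentially identical to the paper's.
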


We now state our results on the generalized resolvent of $H_\zeta^{\b g}$. In this paper, we will always use $\tau$ to denote an arbitrarily small positive constant independent of $N$, and $D$ to denote an arbitrarily large positive constant independent of $N$. Define for any matrix $X$ the max norm 
$$\| X \|_{\max} := \max_{i, j} |X_{ij}|.$$ 
The notations $\eta_*, \eta^*$ and $r$ in next theorem were used in Assumptions 2.3 and 2.4 
of Part I of this series \cite{PartI}. Their meanings are not important for this paper and the reader can simply view them as some parameters.  In this paper, all the statements hold for sufficiently large $N$ and we will not repeat it everywhere.

 \begin{theorem}[Local law]\label{LLniu}
 Define a set of parameters with some constants $\epsilon_*,\epsilon^*>0$:
\be\label{mouyzz} 
\eta_*:=N^{-\e_*},\quad \eta^*:=N^{-\e^*} \quad r:=N^{- \e_*+3\e^*},\quad  T:=N^{- \e_*+\e^* }, \quad  0<\e^*\le \e_* /20.
\ee
Fix any $|e|<2-\kappa$ for some constant $\kappa>0$. Then for any deterministic $z$, $\zeta$, $\b g$ satisfying 
  \be\label{mouyzz2} 
   |\re z-e|\le r, \quad \eta_* \le  \im z\le \eta^* , \quad 0\le \zeta\le T, \quad \|\b g\|_\infty\le W^{-3/4}  , 
\ee
and $W, \, \e_*,\, \e^*$ satisfying
\be\label{67}
\log _N W \ge \max\left\{ \frac67 + \epsilon^* , \ \frac34 + \frac34\e_*+ \e^* \right\},
\ee
we have that for any fixed $\tau>0$ and $D>0$, 
\be\label{jxw}
\P\left( \|G_\zeta^{\b g}(z, e)-M_\zeta^{\b g}( z, e)\| _{\max}\ge  N^{ \tau}\left(\frac{1}{\sqrt{W\im z}}+ \frac{N^{1/2}}{W}\right)\right)\le N^{-D}. 
 \ee 
In fact, the last estimate holds under the weaker assumption 
\be\label{jxw0}
\log _N W \ge \max\left\{ \frac 34 + \epsilon^*, \ \frac12 + \e_*+\e^* \right\}.
\ee
\end{theorem}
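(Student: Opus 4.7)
The plan is to derive the local law by a Schur-complement-based self-consistent equation analysis combined with a multi-scale bootstrap in $\im z$, using the fluctuation averaging bound from \cite{PartIII} as an input to obtain the sharp $N^{1/2}/W$ term. First, for each index $i$ I would apply the Schur complement formula to write
$$(G^{\b g}_\zeta)_{ii}^{-1}=-Z_{ii}-g_i-H_{ii}-\sum_{k,\ell\ne i} H_{ik}(G^{\b g}_\zeta)^{(i)}_{k\ell} H_{\ell i},$$
and analogous identities for the off-diagonal entries. Splitting the quadratic form into its conditional expectation (which reproduces the deterministic SCE \eqref{falvww} for $M_\zeta^{\b g}$) plus a fluctuation $Z_i$, one arrives at a perturbed self-consistent system $\Phi_i(G)=Z_i$ whose residuals $Z_i$ are sums such as $\sum_{k,\ell}(H_{ik}H_{\ell i}-s_{ik}\delta_{k\ell})(G^{\b g}_\zeta)^{(i)}_{k\ell}$. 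Standard large-deviation estimates for quadratic forms yield $|Z_i|\lesssim N^\tau/\sqrt{W\im z}$, which is responsible for the first term in \eqref{jxw}. The sharper second term $N^{1/2}/W$ cannot come from pointwise control and requires the fluctuation averaging estimate from \cite{PartIII}, which bounds averages of $Z_i$-type quantities, and more generally polynomials in resolvent entries, with an extra factor of order $N^{1/2}/W$ beyond the pointwise bound.

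Second, one must invert the linearized SCE around $M_\zeta^{\b g}$. Writing $G_\zeta^{\b g}=M_\zeta^{\b g}+\Delta$ and linearizing, the relevant stability operator is a perturbation of $\Id-m_{\rm sc}(e+\ii 0^+)^2 S_\zeta$ with a correction in the upper $W\times W$ block coming from $z\ne e$. Invertibility of this operator in the bulk $|e|<2-\kappa$ rests on the fact that, although $|m_{\rm sc}(e+\ii 0^+)|=1$, one has $\im m_{\rm sc}(e+\ii 0^+)>0$, so the spectrum of the stability operator remains separated from zero in the norms dictated by $S_\zeta$. The subtlety is that the lower-right block involves the real parameter $e$, so the uniform lower bound $\im (M_\zeta^{\b g})_i(z,e)\gtrsim 1$ that is needed for the stability estimate is supplied by Lemma \ref{UE} rather than by $\im z$. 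Composed with the residual estimate, this yields $\|G_\zeta^{\b g}-M_\zeta^{\b g}\|_{\max}\lesssim N^\tau\big(1/\sqrt{W\im z}+N^{1/2}/W\big)$ on the event that a weak a priori bound $\|\Delta\|_{\max}\le c$ is already in force.

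Third, the a priori bound is installed by a standard continuity (bootstrap) argument: at $\im z=O(1)$ one has $\|G_\zeta^{\b g}-M_\zeta^{\b g}\|_{\max}=o(1)$ by elementary resolvent estimates, and $z\mapsto G_\zeta^{\b g}(z,e)$ is $(\im z)^{-2}$-Lipschitz, so the bound can be propagated down to $\im z=\eta_*$ along a polynomially fine grid, each step closing the bootstrap thanks to the gap between the weak input bound and the stronger conclusion. A union bound over a dense grid in $z,\zeta,\b g$, combined with the Lipschitz continuity statement \eqref{dozy} of $M_\zeta^{\b g}$ and with the analogous Lipschitz continuity of $G_\zeta^{\b g}$, upgrades the pointwise-in-parameters control to the uniform statement of the theorem.

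The main obstacle is the tightness of the budget when $W\gg N^{3/4}$ rather than $W\gtrsim N$: at the smallest scale $\im z=\eta_*$, the naive pointwise error $1/\sqrt{W\im z}$ is only barely $o(1)$, so there is essentially no margin in the bootstrap without the improved $N^{1/2}/W$ term, and this improvement can only come from the fluctuation averaging mechanism. Extracting that improvement in the present setting is delicate because the averaging is performed against the lower $(N-W)\times(N-W)$ block with the real spectral parameter $e$, where individual resolvent entries are not small; exploiting the imaginary part injected by the upper block through $z\in\mathbb C_+$ to carry out the averaging is the technical heart of the argument, and is precisely what is deferred to \cite{PartIII}.
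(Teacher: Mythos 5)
Your proposal correctly identifies the broad ingredients (Schur complement, fluctuation averaging from Part III, a stability operator related to $1-m_{\rm sc}^2 S$, a bootstrap) and correctly locates the hard case in the real spectral parameter $e$, but the mechanism you propose has three genuine gaps that make it fail as written.

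First, the stability claim is wrong. You assert that $\im m_{\rm sc}(e+\ii 0^+)>0$ keeps the spectrum of the stability operator away from zero. In fact, when $\wt z=e$ is real one has $|m_{\rm sc}(e+\ii 0^+)|=1$, $\|M\|_\infty>1$, and $\|S|M|^2\|_{L^\infty\to L^\infty}>1$, so it is not even clear a priori that $1$ lies outside the spectrum of $S|M|^2$, and a uniform lower bound on $1-S|M|^2$ is simply false. What the paper actually proves (Lemma~\ref{maxnorm}) is the \emph{diverging} bound $\|(1-S|M|^2)^{-1}S\|_{\max}={\rm O}(1/(W\im z)+N/W^2)$, and its proof is a delicate quantitative spectral gap estimate: one shows $\sum_i(|M_i|^2-1)\le -cW\im z$ (so the imaginary part of the \emph{first} spectral parameter $z$, not $\wt z$, supplies the positivity) and then uses Temple's inequality together with a Poincar\'e-type inequality for the core of $S$ on an interval of length $(\log N)^4 W$ (Lemma~\ref{small lem}). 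Your perturbation argument around $m_{\rm sc}$ has no route to produce this.

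Second, you have omitted the $T$-equation, which is the core of the paper's argument. The Schur complement analysis only gives a pointwise bound $1/\sqrt{W\im z}$ for the residual $Z_i$, and the fluctuation averaging input alone does not close the estimate; the paper instead sets up a self-consistent equation for $T_{ij}=\sum_k S_{ik}|G_{kj}|^2$ (see \eqref{Tequation}) whose stability constant is the diverging operator norm above, so that each application of the $T$-equation estimate (Lemma~\ref{YEniu1} or \ref{YEniu}) turns an $L^2$-type input $\tnorm{G}^2$ into a strictly improved max-norm bound $\Phi^{(k)}\to\Phi^{(k+1)}$. Without this self-improving structure the $N^{1/2}/W$ term does not appear.

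Third, your continuity bootstrap is the wrong bootstrap. You propose propagating the bound down a polynomially fine grid in $\im z$, but the difficulty is in $\im \wt z$, not $\im z$ (which is constrained to $[\eta_*,\eta^*]$ throughout); and the paper explicitly notes that an ordinary continuity argument in $\im\wt z$ cannot work here, because the $T$-equation estimate degrades the probability set substantially (Remark~\ref{remarkY}) and can only be applied ${\rm O}(1)$ times. The paper's actual scheme is to make large multiplicative jumps $\im\wt z_n\to N^{-\epsilon_0}\im\wt z_n$, at each level inheriting only an $L^2$-bound $\tnorm{G}^2\prec N\wt\Phi^2$ from the previous level via the resolvent $L^2\to L^2$ bound, and then iterating the $T$-equation a bounded number of times to recover the sharp max-norm bound before taking the next jump. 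A fine-grid continuity argument is only used inside a single level (between $\wt z_n$ and $\wt z_{n+1}$) where the self-consistent equation estimate of Lemma~\ref{bneng}, which does not degrade the probability set, suffices.
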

 
We will refer to the first statement, i.e.,  \eqref{jxw} under the assumption \eqref{67}, as the weak form of this theorem, and the statement \eqref{jxw} under assumption \eqref{jxw0} as the strong form.  This paper gives a full and self-contained proof for the weak form, which helps the reader understand the basic strategy of our proof. On the other hand, the proof for the  strong form is much more involved, and we include a substantial part into a separate paper \cite{PartIII}.
Only the strong form of Theorem \ref{LLniu} was used in part I of this series \cite{PartI}, where we took $\log _N W>3/4$, $\e_* <1/4$ and $\e^*$ to be a sufficiently small constant.

 The main purpose of this part and part III \cite{PartIII} of this series is to prove the above Theorem \ref{LLniu}. In fact, the bound (\ref{jxw}) is almost optimal under our setting  in the sense that it (at least) gives the correct size of $\mathbb E|(G_\zeta^{\b g})_{ij}|^2$ for $i\ne j$ up to an $N^\tau$ factor.
This sharp bound is very important for the proof of the complete delocalization of eigenvectors and the bulk universality of random band matrices in part I \cite{PartI}. As explained there, the bound must be of order ${\rm o}(W/N)$ to allow the application of the so-called {\it mean field reduction} method, which was introduced in \cite{BouErdYauYin2017} and is the starting point of this series. Compared with the local law for regular resolvents, the main difficulty in proving the local law for the generalized resolvents is due to the small and even vanishing imaginary part of $\wt z$. As a result, some key inputs, such as Ward's identity (see \eqref{ward}) for the regular resolvents estimates are missing. In fact, as discussed before, the case $\|G(z,\wt z)\|_{\max}=\infty$ could occur when $\wt z=e$ is real.  This difficulty has already appeared in the case $W\ge c N$ in \cite{BouErdYauYin2017}, where some "uncertainty principle" was introduced to solve this problem. Unfortunately, this method seems difficult to apply in the $W\ll N$ case. Instead, in this paper, we shall use a totally different 
strategy, i.e, the $T$-equation method, which was introduced in \cite{ErdKnoYauYin2013-band}. 
Moreover, we have to improve the induction (bootstrap) argument used in \cite{ErdKnoYauYin2013-band}, as explained below. We remark that the proofs of the weak form and strong form of Theorem \ref{LLniu} are completely parallel, except that we will apply a stronger $T$-equation estimate (Lemma \ref{YEniu}) than the one (Lemma \ref{YEniu1}) used in the proof of the weak form. We shall give a simple proof of the weak $T$-equation estimate using the standard fluctuation averaging mechanism as in the previous proof of local semicircle law \cite{ErdKnoYauYin2013,ErdYauYin2012Univ}. The proof of the strong $T$-equation estimate is based on an improved (and substantially more involved) fluctuation averaging result, whose proof is delayed to part III of this series \cite{PartIII}.
\nc

\subsection{Sketch of proof.}\  In the following discussion, for two random variables $X$ and $Y$, we shall use the notation $X\prec Y$ if for any fixed $\tau>0$, $|X|\le N^\tau|Y|$ with high probability for large enough $N$.

We define the $T$ matrix with entries
\be \label{def: T}
T_{ij}:= \sum_{k } S_{ik} |G_{kj}|^2 ,\quad G\equiv G_\zeta^{\b g}, \quad  S_{ik}\equiv (S_{\zeta})_{ik},
\ee
With a standard self-consistent equation estimate (see Lemma \ref{bneng}), one can show that
\be\label{selfT}
\left\|G - M\right\|_{\max}^2\prec \|T\|_{\max}, \quad M\equiv M_\zeta^{\b g}.
\ee
Our proof of Theorem \ref{LLniu} is based on an induction argument combined with a self-consistent $T$-equation estimate as explained below. We introduce the following notation:
\begin{align}\label{tri} 
 \tnorm{G}^2(z,\wt z) :=   \max_j   \sum_{1\le i\le N} |G_{ij}(z,\wt z)|^2 ,\quad \Lambda(z,\tilde z):=\left\|G - M\right\|_{\max}(z,\wt z).
\end{align}
Fix $z$ and $\re \wt z=e$. We perform the induction with respect to the imaginary part of $\wt z$. Define a sequence of $\wt z_n$ such that 
$$ \im \wt z_n = N^{-n\e }\im   z, \quad \re \wt z_n=e,$$ 
for small enough constant $\e>0$. In the $n=0$ case with $\im \wt z_0 = \im  z$, using the methods in \cite{ErdKnoYauYin2013,ErdYauYin2012Univ}, we can obtain the local law \eqref{jxw} for $G(z,\wt z_0)$. Suppose we has proved the local law for $G(z,\wt z_{n-1})$: 
\be\label{jxw_prec}\Lambda(z,\wt z_{n-1}) \prec \Phi_{{\rm goal}}, \quad \Phi_{{\rm goal}}:= \frac{1}{\sqrt{W\im z}}+ \frac{N^{1/2}}{W}.\ee
Then with  $\im \wt z_{n} = N^{-\e }\im   z_{n-1}$ and a simple (but quite sharp up to an $N^{2\epsilon}$ factor) $L^2$-estimate, we get a bound on the $n$-th level: 
\be\label{initial}
\tnorm{G}^2(z,\wt z_{n})\prec N\wt \Phi^2 , \quad \wt \Phi^2 :=N^{2\epsilon}\Phi^2_{{\rm goal}},
\ee
which gives a rough bound $\Phi^{(0)}$ by the self-consistent equation estimate \eqref{selfT}:
\be\label{initial2}
\quad \|T\|_{\max} (z,\wt z_{n})\le \frac{C_s}{W}\tnorm{G}^2(z,\wt z_{n}) \prec (\Phi^{(0)})^2 \Rightarrow \Lambda(z,\wt z_n) \prec  \Phi^{(0)}, \quad \Phi^{(0)}:= \sqrt{\frac{N}{W}}\wt \Phi,
\ee
where $C_s$ is the constant from \eqref{bandcw1}. Note that $\wt\Phi$ is very close to $\Phi_{{\rm goal}}$, while $\Phi^{(0)}$ is not. Now with the {strong} $T$-equation estimate (see Lemma \ref{YEniu}), one can get an improved bound $(\Phi^{(1)})^2$ on $T$ as follows: 
\be\label{Testimate}
\|T\|_{\max} (z,\wt z_{n})\prec (\Phi^{(1)})^2 \Rightarrow \Lambda(z,\wt z_n) \prec  \Phi^{(1)}, \quad \Phi^{(1)}:= \Phi^2_{{\rm goal}} + \left( \frac{N}{W\im z}+\frac{N^{2}}{W^2} \right) \left( \wt \Phi^2+ {N^{-1/2}}\right) (\Phi^{(0)})^2, 
\ee
where we used \eqref{selfT} to get a better bound $\Lambda(z,\wt z_n)\prec \Phi^{(1)}$. With \eqref{jxw0}, one can verify that $\Phi^{(1)}\le \Phi_{{\rm goal}}+N^{-\epsilon'}\Phi^{(0)}$ for some constant $\epsilon'>0$. 
After at most $l:=1/\e'$ many iterations with \eqref{Testimate} and \eqref{selfT}, i.e. $\Phi^{(0)}\to \Phi^{(1)}\to \cdots \to \Phi^{(l)}$, we can obtain the local law \eqref{jxw_prec} for $G(z,\wt z_n)$, which is used as the input for the next induction. The key point of this induction argument is that one has a good $L^2$-bound \eqref{initial} inherited from the local law on the upper level, and this $L^2$-bound can be used in the $T$-equation estimate \eqref{Testimate} to give an improved bound for $\Lambda(z,\wt z_n)$ on this level.   
Finally, after finitely many inductions in $n$, we can obtain the local law \eqref{jxw} for, say, $G(z, e+\ii N^{-10})$. Then with a continuity argument, we can prove the local law \eqref{jxw} for $G(z,e)$. In Fig.\,\ref{proof_strat}, we illustrate the flow of the induction argument with a diagram. 

We remark that the above induction argument is not a continuity argument, as used e.g. in the works \cite{ESY_local,ErdKnoYauYin2013,ErdYauYin2012Univ} on local semicircle law of regular resolvents. The multiplicative steps $\im \wt z_n \to N^{-\e }\im  \wt z_n$ that we made are far too large for a continuity argument to work. The main reason for choosing this multiplicative step is that the $T$-equation estimate can only be applied for ${\rm O}(1)$ number of times due to the degrade of the probability set (see Remark \ref{remarkY}).

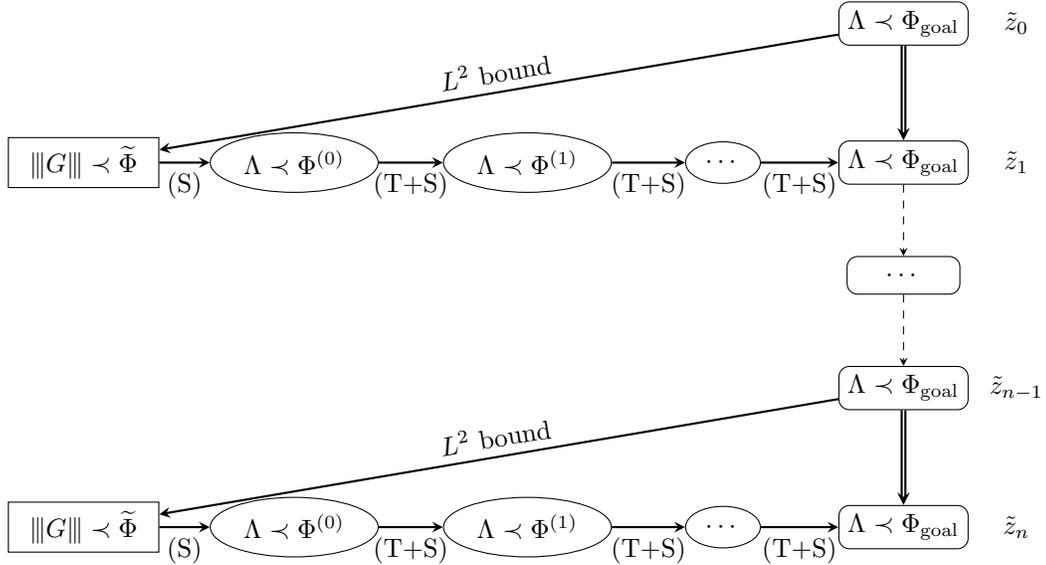
\begin{figure}[htb]
\centering
\begin{tikzpicture}[node distance=1.5cm]

\tikzstyle{startstop} = [rectangle,rounded corners, minimum width=1cm,minimum height=0.5cm,text centered, draw=black]
\tikzstyle{startstop2} = [rectangle,rounded corners, minimum width=1.5cm,minimum height=0.5cm,text centered, draw=black]
\tikzstyle{rough} = [rectangle,minimum height=0.5cm,text centered,minimum width =2cm,draw=black]
\tikzstyle{Gestimate} = [ellipse,minimum width=0.5cm,minimum height=0.5cm,text centered,draw=black]
\tikzstyle{Gestimate2} = [ellipse,minimum width=0.5cm,minimum height=0.5cm,text centered,draw=black]
\tikzstyle{arrow} = [thick,->,>=stealth]
\tikzstyle{arrow_dash} = [dashed,->,>=stealth]
\tikzstyle{arrow_double} = [double,thick, ->,>=stealth]

\node (start) [startstop] {$\Lambda \prec \Phi_{{\rm goal}}$}; \node [right of = start] {$\tilde z_0$};
\node (input1) [rough,below of=start, xshift=-10.9cm, yshift=-0.35cm] {$\tnorm{G}\prec \wt \Phi$ };
\node (process1) [Gestimate, right of=input1, xshift=1.3cm] {$ \Lambda\prec \Phi^{(0)}$};
\node (process12) [Gestimate, right of=process1, xshift=1.6cm] {$ \Lambda\prec \Phi^{(1)}$};
\node (process13) [Gestimate2, right of=process12, xshift=1.1cm] {$\cdots$};
\node (process14) [startstop, below of=start, yshift=-0.35cm] {$\Lambda \prec \Phi_{{\rm goal}}$};\node [right of = process14] {$\tilde z_1$};

\node (inputm) [startstop2,below of=process14] {$\cdots$ };

\node (processn) [startstop, below of=inputm] {$\Lambda\prec \Phi_{{\rm goal}}$};\node [right of = processn] {$\tilde z_{n-1}$};
\node (input2) [rough,below of=processn, xshift=-10.9cm, yshift=-0.35cm] {$\tnorm{G} \prec \wt \Phi$ };
\node (process2) [Gestimate, right of=input2, xshift=1.3cm] {$ \Lambda\prec\Phi^{(0)}$};
\node (process22) [Gestimate, right of=process2, xshift=1.6cm] {$ \Lambda\prec\Phi^{(1)}$};
\node (process23) [Gestimate2, right of=process22, xshift=1.1cm] {$\cdots$};
\node (process24) [startstop, below of=processn, yshift=-0.35cm] {$\Lambda \prec \Phi_{{\rm goal}}$};\node [right of = process24] {$\tilde z_n$};

\draw [arrow] (start) -- node[above,sloped]{$L^2$ bound}(input1);
\draw [arrow] (input1) --node[below]{(S)} (process1);
\draw [arrow] (process1) -- node[below]{(T+S)} (process12);
\draw [arrow] (process12) -- node[below]{(T+S)} (process13);
\draw [arrow] (process13) -- node[below]{(T+S)} (process14);
\draw [arrow_double] (start) -- (process14);
\draw [arrow_dash] (process14) -- (inputm);

\draw [arrow] (processn) -- node[above,sloped]{$L^2$ bound} (input2);
\draw [arrow] (input2) -- node[below]{(S)}(process2);
\draw [arrow] (process2) -- node[below]{(T+S)} (process22);
\draw [arrow] (process22) -- node[below]{(T+S)}(process23);
\draw [arrow] (process23) -- node[below]{(T+S)}(process24);
\draw [arrow_dash] (inputm) -- (processn);
\draw [arrow_double] (processn) -- (process24);
\end{tikzpicture}
\caption{The diagram for the induction argument with respect to $n$. At each level $n-1$, we obtain the local law \eqref{jxw_prec}, which gives the rough bound $\Phi^{(0)}$ on level $n$ through \eqref{initial} and \eqref{initial2}. Applying \eqref{Testimate} and \eqref{selfT} iteratively, one can improve the initial bound $\Phi^{(0)}$ to the sharp bound $\Phi_{{\rm goal}}$. In the diagram, (S) stands for an application of the self-consistent equation estimate \eqref{selfT}, and (T+S) stands for an application of the $T$-equation estimate \eqref{Testimate} followed by a self-consistent equation estimate \eqref{selfT}.}
\label{proof_strat}
\end{figure}

The main difficulty of our proof lies in establishing the $T$-equation estimate \eqref{Testimate}. The starting point is a self-consistent equation for the $T$ matrix, i.e. the $T$-equation, see \eqref{Tequation} below.
In this paper, we focus on proving the stability of the $T$-equation, i.e. bounding $\|\left(1-S|M|^2\right)^{-1}S\|_{\max}$ in \eqref{Tequation}, where we abbreviate $S\equiv S_{\zeta}$. For regular resolvent of generalized Wigner matrices (i.e. $\tilde z=z$, $\zeta=0$ and $\mathbf g=\bf 0$), we have $|M| \le 1- c\im z$ for some constant $c>0$. However, in our general setting and in particular when $\im \tilde z$ is small, we actually have $\|M\|_\infty>1$ and $\|S|M|^2\|_{l^\infty\to l^\infty}>1$. Therefore, the usual Taylor expansion approach cannot be used (in fact, it is not even easy to see that $1$ is outside the spectrum of $|M|^2 S$).  In this paper, we will establish the following bound
$$\left\|(1-S|M|^2)^{-1} S\right\|_{\max}=\OO \left( \frac{1}{W\im z}+\frac{N}{W^2}\right).$$
{ One important component for the proof is the estimate $\sum_{i}(|M_i|^2-1)\le -cW\im z$ for some constant $c>0$. To see this bound is useful, we can intuitively view $(|M|^2 S)^n$ as an $n$-step inhomogeneous random walk on $\Z_N$ with annihilation, where the average annihilation rate is $-W\im z/N$ by the above bound. This shows that we can explore some decay properties of $(|M|^2 S)^n$ as $n$ increase, which may give some useful bounds on the Taylor expansion of $(1-S|M|^2)^{-1}$. However, our proof actually will not follow this heuristic argument, see Section \ref{matrix-norm}.} 

Finally, to finish the proof of the  strong version of the $T$-equation estimate (Lemma \ref{YEniu}), we need a fluctuation averaging results for a quantity of the form $N^{-1}\sum_k \cal E_k$, where $\mathcal E_k$'s are some polynomials of the generalized resolvent entries.  
The proof involves a new graphical method and we include it in part III of this series \cite{PartIII}.

 \section{Tools for the proof of Theorem \ref{LLniu} }

The basic strategy to  prove Theorem \ref{LLniu} is to apply the self-consistent equation estimate: Lemma \ref{bneng}, and the $T$-equation estimate: Lemma \ref{YEniu1} or \ref{YEniu}, in turns. We collect these results in this section, and use them to prove Theorem \ref{LLniu} in next section.

For simplicity, we will often drop the superscripts $\zeta$ and $\b g$ from our notations. In particular, $G$ and $M$ are always understood as $G_\zeta^{\bf g}$ and $M_\zeta^{\bf g}$, while $H$ and $S$ are understood as $H_{\zeta}$ and $S_{\zeta}$ in the rest of this paper.

In the proof, for quantities $A_N$ and $B_N$, we will use the notations $A_N = {\rm O}(B_N)$ and $A_N \asymp B_N$ to mean that $|A_N| \le C|B_N|$ and $C^{-1}|B_N| \le |A_N| \le C|B_N|$, respectively, for some constant $C>0$.

\subsection{The self-consistent equation estimate.} \ 
The self-consistent equation estimate is the starting point of almost every proof of the local law of the (generalized) resolvents of random matrices. We now state the self-consistent equation estimate for our model.

\begin{lemma}[Self-consistent equation estimate]\label{bneng} Suppose that 
$|\re \wt z\, |\le 2-\kappa$ for some constant $\kappa>0$. Then there exists constant $c_0>0$ such that  if
$$
 \zeta+\| \b g\|_\infty+ |z-\wt z|\le c_0,
$$
then the following statement holds. If there exist some fixed $\delta>0$ and some deterministic parameter $\Phi \ge W^{-1/2}$ such that 
\be\label{cllo}
  \|G(z,\wt z)-M(z,\wt z)\|_{\max}\le N^{-\delta}, \quad  \|T\|_{\max}\le \Phi^2, 
\ee 
in a subset $\Omega$ of the sample space of the random matrices, then for any fixed $\tau>0$ and $D>0$, 
\be\label{34}
\P\left({\bf 1}_{  \Omega} \|G(z,\wt z)-M(z,\wt z)\|_{\max}\ge N^\tau \Phi \right)\le N^{-D}.
\ee 
\end{lemma}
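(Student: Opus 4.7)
The plan is to follow the classical self-consistent equation (SCE) strategy for resolvent entries of Wigner-type matrices, adapted to the generalized resolvent and to the a priori hypothesis \eqref{cllo}. First I would derive the SCE for each diagonal entry $G_{ii}$ via Schur's complement. Writing $z_i := z\mathds{1}_{i\in\llbracket 1,W\rrbracket}+\wt z\,\mathds{1}_{i\notin\llbracket 1,W\rrbracket}$ and letting $G^{(i)}$ denote the resolvent of the minor obtained by removing row and column $i$, Schur's formula yields
$$G_{ii}^{-1}=-z_i-g_i+H_{ii}-\sum_{k,l\neq i}H_{ik}H_{il}G^{(i)}_{kl}.$$
Separating the quadratic form into its conditional mean $\sum_k s_{ik}G^{(i)}_{kk}$ (over row $i$) and the centered fluctuation $\Xi_i$, and replacing minor entries by full entries via $G^{(i)}_{kk}=G_{kk}-G_{ki}G_{ik}/G_{ii}$, one obtains
$$G_{ii}^{-1}+z_i+g_i+\sum_k s_{ik}G_{kk}=\Upsilon_i,\qquad \Upsilon_i:=H_{ii}-\Xi_i+G_{ii}^{-1}\sum_k s_{ik}G_{ki}G_{ik}.$$

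The core step is to show $|\Upsilon_i|\prec\Phi$. The term $H_{ii}$ is $\mathrm{O}(N^{-1/2})\leq W^{-1/2}\leq\Phi$. The replacement correction is bounded by $|G_{ii}|^{-1}T_{ii}\lesssim\Phi^2$, where $|G_{ii}|\gtrsim 1$ follows from Lemma \ref{UE} together with the a priori bound $\|G-M\|_{\max}\leq N^{-\delta}$. The fluctuation $\Xi_i$ is a centered quadratic form in the independent entries $(H_{ik})_{k\neq i}$; by the Hanson--Wright inequality (valid under the moment assumption \eqref{eqn:subgaus}),
$$|\Xi_i|^2\;\prec\;\sum_{k}s_{ik}^2|G^{(i)}_{kk}|^2+\sum_{k\neq l}s_{ik}s_{il}|G^{(i)}_{kl}|^2\;\lesssim\;\tfrac{C}{W}+\|T^{(i)}\|_{\max}\;\lesssim\;\Phi^2,$$
where we used $\sum_{k,l}s_{ik}s_{il}|G^{(i)}_{kl}|^2=\sum_l s_{il}T^{(i)}_{il}\lesssim\|T^{(i)}\|_{\max}$ and then passed from $T^{(i)}$ to $T$ by one more application of the same resolvent identity. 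Combining the three contributions yields $|\Upsilon_i|\prec\Phi$.

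Subtracting the deterministic equation \eqref{falvww} from the random SCE and setting $v_i:=G_{ii}-M_i$, simple algebra (using $G_{ii}M_i=M_i^2+M_iv_i$) produces the linearized system
$$(I-M^2S)\boldsymbol v\;=\;GM\cdot\boldsymbol\Upsilon\;+\;\mathrm{O}\!\left(\|\boldsymbol v\|_\infty^{\,2}\right),$$
understood coordinatewise. Inverting $I-M^2 S$ in max-norm via the stability bound established in Section \ref{matrix-norm}, and absorbing the quadratic term using the a priori hypothesis $\|\boldsymbol v\|_\infty\leq N^{-\delta}$, one concludes $\max_i |v_i|\prec\Phi$. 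For an off-diagonal entry $i\neq j$ (where $M_{ij}=0$), the identity $G_{ij}=-G_{ii}\sum_{k\neq i}H_{ik}G^{(i)}_{kj}$ reduces the problem to a linear concentration bound, giving $|G_{ij}|\prec\big(\sum_k s_{ik}|G^{(i)}_{kj}|^2\big)^{1/2}\lesssim\sqrt{T_{ij}}\lesssim\Phi$ directly from the hypothesis $\|T\|_{\max}\leq\Phi^2$. A standard union bound over $(i,j)$ then upgrades all $\prec$-statements to the high-probability conclusion \eqref{34}.

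The main obstacle is the stability of the linear operator $I-M^2 S$ in max-norm. For a regular Wigner resolvent the elementary estimate $|m_{\rm sc}|^2\leq 1-c\,\im z$ lets one sum a geometric series; in our generalized setting, however, $|M_i|$ may exceed $1$ when $\im\wt z$ is small, so a subtler mechanism---based on averaging cancellations and the lower bound $\sum_i(|M_i|^2-1)\leq -cW\im z$ alluded to in the introduction---is required. This stability analysis is the technical heart of the paper and is carried out in Section \ref{matrix-norm}; for the present lemma we invoke it as a black box.
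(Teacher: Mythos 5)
Your proposal is correct and follows essentially the same route as the paper's proof: Schur's complement yields the self-consistent equation with error $\Upsilon_i$, the fluctuation is bounded by a large-deviation/Hanson--Wright estimate using $\|T\|_{\max}\le\Phi^2$, the linearized system $(I-M^2S)\boldsymbol v = \mathrm{O}_\tau(\Phi)$ is inverted via the stability bound \eqref{tianYz} from Section \ref{matrix-norm}, and the a priori bound $\|\boldsymbol v\|_\infty\le N^{-\delta}$ absorbs the quadratic term; the off-diagonal case likewise matches. One cosmetic slip: $H_{ii}$ is $\mathrm{O}_\tau(W^{-1/2})$, not $\mathrm{O}(N^{-1/2})$, by \eqref{bandcw1} and \eqref{eqn:subgaus} --- but since $W^{-1/2}\le\Phi$ this does not affect the conclusion.
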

Note that by the definition of $T$-matrix in \eqref{def: T}, we have
$$ \|T\|_{\max} \le \|G(z,\wt z)-M(z,\wt z)\|_{\max}^2 + {\rm O}(W^{-1}).$$
Hence we can always choose $\Phi =\OO (N^{-\delta})$ in \eqref{cllo}. The proof of Lemma \ref{bneng} follows the standard idea of using a vector-level self consistent equation method \cite{ErdYauYin2012Univ,ErdKnoYauYin2013}. In preparation for the proof, we recall the following definition of minors.  

\begin{definition}[Minors] \label{def:minors}
For any $N\times N$ matrix $A$ and $\mathbb T \subset \{1, \dots, N\}$, we define the minor of the first kind $A^{[\mathbb T]}$ as the $(N-|\mathbb T|)\times (N-|\mathbb T|)$ matrix with
\begin{equation*}
(A^{[\mathbb T]})_{ij} \;\deq\; A_{ij}, \quad i,j  \notin \mathbb T.
\end{equation*}
For any $N\times N$ invertible matrix $B$, we define the minor of the second kind $B^{(\mathbb T)}$ as the $(N-|\mathbb T|)\times (N-|\mathbb T|)$ matrix with
\begin{equation*}
(B^{(\mathbb T)})_{ij}=\left( (B^{-1})^{[\mathbb T]}\right)^{-1}_{ij}, \quad i,j  \notin \mathbb T,
\end{equation*} 
whenever $(B^{-1})^{[\mathbb T]}$ is invertible. Note that we keep the names of indices when defining the minors. By definition, for any sets $\mathbb U,\mathbb T\subset \{1,\dots, N\}$, we have 
\be\nonumber
(A^{[\mathbb T]})^{[\mathbb U]}=A^{[\mathbb T\cup \mathbb U]}, \quad
 (B^{(\mathbb T)})^{(\mathbb U)}=B^{(\mathbb T\cup \mathbb U)} .
\ee
For convenience, we shall also adopt the convention that for $i\in \mathbb T$ or $j\in \mathbb T$,
$$(A^{[\mathbb T]})_{ij}=0, \quad (B^{(\mathbb T)})_{ij}=0.$$
For $\mathbb T = \{a\}$ or $\mathbb T = \{a,b\}$, we shall abbreviate $(\{a\})\equiv (a)$ and $(\{a,b\})\equiv (ab)$. \end{definition}

\begin{remark}
In previous works, e.g. \cite{EKYY2,ErdYauYin2012Univ}, we have used the notation $(\cdot)$ for both the minor of the first kind and the minor of the second kind. Here we try to distinguish between $(\cdot)$ and $[\cdot]$ in order to be more rigorous.
\end{remark}

The following  identities were proved in Lemma 4.2 of \cite{ErdYauYin2012Univ} and Lemma 6.10 of \cite{EKYY2}.
\begin{lemma}[Resolvent identities]
For an invertible matrix $B\in \C^{N\times N}$ and $k\notin \{i,j\}$, we have
\begin{equation} \label{Gij Gijk}
B_{ij} \;=\; B_{ij}^{(k)} + \frac{B_{ik} B_{kj}}{B_{kk}}\, , \quad \frac1{B_{ii}} \;=\; \frac1{B_{ii}^{(k)}} -  \frac{B_{ik} B_{ki}}{B_{ii}^{(k)}B_{ii}B_{kk}}\, ,
\end{equation}
and
\begin{equation} \label{sq root formula2}
\frac 1{B_{ii}} \;=\;  (B^{-1})_{ii}-\sum_{k,l}^{(i)}(B^{-1})_{ik}  B^{(i)}_{kl} (B^{-1})_{li} .
\end{equation}
Moreover, for $i \neq j$ we have
\begin{equation} \label{sq root formula}
B_{ij} \;=\; - B_{ii} \sum_{k}^{(i)} (B^{-1})_{ik} B_{kj}^{(i)} \;=\; - B_{jj} \sum_k^{(j)}  B^{(j)} _{ik} (B^{-1})_{kj}\,.
\end{equation}
{The above equalities are understood to hold whenever the expressions in them make sense.}
\end{lemma}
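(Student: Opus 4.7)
The plan is to derive all four identities from the Schur complement formula applied to suitable block decompositions of $B$; no probabilistic input and not even the band structure of $H$ will be needed.

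First, I would establish the key structural identity
\begin{equation*}
B^{(k)}_{ij} \;=\; B_{ij} - \frac{B_{ik}B_{kj}}{B_{kk}}, \qquad i,j \neq k.
\end{equation*}
Partitioning $B$ so that the $k$-th row and column are separated,
\begin{equation*}
B \;=\; \begin{pmatrix} B_{kk} & r^{\top} \\ c & B^{[k]} \end{pmatrix},
\end{equation*}
the standard block-inverse (Schur complement) formula gives $(B^{-1})^{[k]} = (B^{[k]} - c B_{kk}^{-1} r^{\top})^{-1}$, which by Definition~\ref{def:minors} is exactly $(B^{(k)})^{-1}$. Reading off entries yields the displayed identity, which is the first equation in~\eqref{Gij Gijk}. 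The second equation in~\eqref{Gij Gijk} then follows by dividing the first by $B_{ii} B^{(k)}_{ii}$ and rearranging.

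Next, for~\eqref{sq root formula}, I would use the structural identity with $i$ playing the role of $k$, giving $B^{(i)}_{kj} = B_{kj} - B_{ki}B_{ij}/B_{ii}$. Multiplying by $(B^{-1})_{ik}$ and summing over $k \neq i$, the leading term collapses via $\sum_k (B^{-1})_{ik} B_{kj} = \delta_{ij}$ (after subtracting the $k=i$ contribution), while the correction uses $\sum_{k \neq i}(B^{-1})_{ik} B_{ki} = 1 - (B^{-1})_{ii} B_{ii}$. For $i\neq j$ the $\delta_{ij}$ drops out and the remaining cancellations reduce to $B_{ij} = -B_{ii}\sum_k^{(i)}(B^{-1})_{ik} B^{(i)}_{kj}$. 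The symmetric form on the right of~\eqref{sq root formula} is obtained by the analogous computation after isolating the $j$-th row and column instead.

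Finally, to prove~\eqref{sq root formula2}, I would substitute $B^{(i)}_{kl} = B_{kl} - B_{ki}B_{il}/B_{ii}$ into $\sum_{k,l \neq i}(B^{-1})_{ik} B^{(i)}_{kl}(B^{-1})_{li}$. The $B_{kl}$ piece is evaluated from $(B^{-1} B B^{-1})_{ii} = (B^{-1})_{ii}$ after an inclusion--exclusion that accounts for the excluded indices $k=i$ or $l=i$; the correction piece factors as $B_{ii}^{-1}(1 - B_{ii}(B^{-1})_{ii})^2$ by separately evaluating $\sum_{k\neq i}(B^{-1})_{ik}B_{ki}$ and $\sum_{l\neq i}B_{il}(B^{-1})_{li}$. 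Combining the two contributions simplifies (via the identity $a+(1-a)^2 - (1-a) = a$ with $a := B_{ii}(B^{-1})_{ii}$) to $(B^{-1})_{ii} - 1/B_{ii}$, which is~\eqref{sq root formula2}. The only delicate point in the whole proof is the bookkeeping needed to convert the ``all indices'' sums coming from $BB^{-1}=I$ into the ``$k\neq i$'' sums appearing in the statement; the remaining work is purely algebraic.
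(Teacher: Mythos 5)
The paper does not prove this lemma in-text; it refers to Lemma 4.2 of \cite{ErdYauYin2012Univ} and Lemma 6.10 of \cite{EKYY2}. Your Schur-complement argument is the standard proof of these identities and is essentially the same route taken in those references: the structural relation $B^{(k)}_{ij} = B_{ij} - B_{ik}B_{kj}/B_{kk}$ for $i,j\neq k$ is exactly the Schur-complement formula for the $(2,2)$ block of $B^{-1}$ (and is valid whenever $B_{kk}\neq 0$, which the lemma's closing caveat covers), and from it everything else follows by elementary summation using $\sum_k(B^{-1})_{ik}B_{kj} = \delta_{ij}$ and $\sum_k B_{ik}(B^{-1})_{kj}=\delta_{ij}$. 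Your derivations of \eqref{Gij Gijk} and \eqref{sq root formula} are correct as written.

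There is one slip in the last paragraph: the parenthetical identity ``$a + (1-a)^2 - (1-a) = a$'' is false, as its left-hand side equals $a^2$. Writing $a := B_{ii}(B^{-1})_{ii}$ and carrying out the inclusion--exclusion you outline, the $B_{kl}$ piece contributes $(B^{-1})_{ii}(a-1)$ while the correction piece contributes $-B_{ii}^{-1}(1-a)^2$ (note the overall minus sign coming from $-B_{ki}B_{il}/B_{ii}$ in the structural identity, which your sentence omits). The scalar identity that actually closes the argument is $a(a-1) - (1-a)^2 = a-1$, equivalently $a(2-a)+(1-a)^2 = 1$, both of which hold. Substituting this in does give $\frac{1}{B_{ii}} = (B^{-1})_{ii} - \sum_{k,l}^{(i)}(B^{-1})_{ik}B^{(i)}_{kl}(B^{-1})_{li}$, so \eqref{sq root formula2} is correctly obtained; only the quoted auxiliary identity (and a sign in the description of the correction piece) was misstated, not the underlying computation.
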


Since the $N^\tau$ factor and the $N^{-D}$ bound for small probability event appear very often in our proof, we introduce the following notations.

\begin{definition} \label{whp}
For any non-negative $A$, we denote
 $$ \OO_\tau(A):= {\rm O}(N^{{\rm O}(\tau)} A)  .$$
We shall say an event $\mathcal E_N$ holds with high probability ({\it w.h.p.}) if for any fixed $D>0$,
 $$\P( \mathcal E_N)\ge 1-N^{-D}$$
for sufficiently large $N$. Moreover, we say $\mathcal E_N$ holds with high probability in $\Omega$ if for any fixed $D>0$,
 $$\P( \Omega\setminus \mathcal E_N)\le N^{-D}$$
for sufficiently large $N$.
 \end{definition}

The following lemma gives standard large deviation bounds that will be used in the proof of Lemma \ref{bneng}.

\begin{lemma}[Lemma 3.5 of \cite{isotropic}]\label{large_deviation}
Let $(X_i)$ be a family of  independent random variables and $(b_{i})$, $(B_{ij})$ be deterministic families of complex numbers, where $i,j=1, \ldots, N$. Suppose the entries $X_i$ satisfy $\mathbb E X_i=0$, $\mathbb E| X_i |^2 = 1$ and the bound(\ref{eqn:subgaus}). Then for any fixed $\tau>0$, we have
\begin{equation*}
\left|\sum_i {b_i X_i } \right| \le N^\tau\left( {\sum_i {\left| {b_i^{} } \right|^2 } } \right)^{1/2} , \quad \left|\sum_{i, j}\bar X_i B_{ij} X_j \right| \le N^\tau\left( {\sum_{i,j} {\left| B_{ij} \right|^2 } } \right)^{1/2},
\end{equation*}
 with high probability.
\end{lemma}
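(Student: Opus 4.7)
The plan is to use the standard $2p$-th moment method. For any even $p\in\mathbb{N}$ and any random quantity $Y$, Markov's inequality gives $\mathbb{P}(|Y|\geq N^\tau A)\leq N^{-2p\tau}A^{-2p}\,\mathbb{E}|Y|^{2p}$, so it suffices to establish $\mathbb{E}|Y|^{2p}\leq C_p A^{2p}$ for each fixed $p$; taking $p$ large depending on $D$ and $\tau$ then yields the stated ``with high probability'' conclusion. The only input from the random variables is the uniform high-moment bound \eqref{eqn:subgaus}, which enters through $\mu_k$ for $k\leq 2p$; thanks to the normalization $\mathbb{E}|X_i|^2=1$ the second moment is automatically unit.

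For the linear form $Y_1=\sum_i b_i X_i$, I would expand $\mathbb{E}|Y_1|^{2p}$ as a sum over multi-indices $(i_1,\ldots,i_{2p})$ of products $\prod_\alpha b_{i_\alpha}^{\sharp}\,\mathbb{E}\!\left[\prod_\alpha X_{i_\alpha}^{\sharp}\right]$, where $\sharp$ tracks conjugations. Independence together with $\mathbb{E}X_i=0$ force every distinct index value to appear at least twice in any surviving term. Partitioning the $2p$ positions into blocks of sizes $k_1,\ldots,k_r\geq 2$, bounding each block expectation by $\mu_{k_\ell}$ via \eqref{eqn:subgaus}, and summing over index assignments gives $\prod_\ell \sum_i |b_i|^{k_\ell}$. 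A Hölder reduction collapses each $k_\ell\geq 2$ block to a variance factor $\sum_i|b_i|^2$, and the $(2p)!/(2^p p!)$ pair-matching terms dominate, yielding $\mathbb{E}|Y_1|^{2p}\leq C_p\bigl(\sum_i|b_i|^2\bigr)^p$.

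For the quadratic form I would split $\sum_{i,j}\bar X_i B_{ij} X_j$ into the diagonal piece $\sum_i B_{ii}|X_i|^2$ and the off-diagonal piece $Y_{od}=\sum_{i\neq j}\bar X_i B_{ij} X_j$. The diagonal piece, after subtracting its deterministic mean, is a linear form in the independent centered variables $|X_i|^2-1$, whose high moments are still controlled by \eqref{eqn:subgaus}, so it reduces to the previous case. For $Y_{od}$ the same $2p$-th moment expansion produces a sum indexed by labellings of length $4p$, which encode a multigraph on the distinct indices whose $2p$ edges carry $B$-factors. Independence and centering force every vertex to have total degree $\geq 2$; the dominant contribution, perfect pair matchings of half-edges, delivers $C_p\bigl(\sum_{i\neq j}|B_{ij}|^2\bigr)^p$.

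The main technical obstacle is the combinatorial verification that all graphs with higher vertex multiplicities are subdominant to the pair matchings. This is a standard Hanson--Wright-type argument: each vertex of multiplicity $k>2$ costs an additional factor of $\mu_k$ but saves one index sum, and a Hölder estimate on the edge weights, combined with $|B_{ij}|\leq \bigl(\sum|B_{ij}|^2\bigr)^{1/2}$, shows the saving is at most $(\sum_{i\neq j}|B_{ij}|^2)^{1/2}$ per edge, giving a total bound of the form $C_p\bigl(\sum|B_{ij}|^2\bigr)^p$ with a $p$-dependent combinatorial constant. The careful bookkeeping is carried out in Lemma 3.5 of \cite{isotropic}, so for our purposes we would simply quote that combinatorial argument and plug in \eqref{eqn:subgaus} to supply the required moment bounds.
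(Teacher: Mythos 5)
The paper gives no proof of this lemma; it is quoted directly from Lemma 3.5 of \cite{isotropic}, so there is no internal argument to compare against, and your sketch is the standard $2p$-th moment (Hanson--Wright) expansion that the cited reference carries out. One caveat worth flagging: the quadratic-form bound as printed, summing over all $(i,j)$ including the diagonal, is not literally true---take $B=\Id$ and $X_i$ real standard Gaussian, and the left side concentrates near $N$ while the right side is $N^{\tau+1/2}$. The version actually stated in \cite{isotropic}, and the one your diagonal/off-diagonal decomposition proves, is the off-diagonal sum $\sum_{i\ne j}$ together with a separate linear-form estimate for $\sum_i a_i(|X_i|^2-1)$; this is also precisely what the paper needs in the proof of Lemma~\ref{bneng}, where the diagonal mean is removed by the $Q_i$ projection before the large-deviation bound is invoked. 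So your proof establishes the correct statement, and you should simply be aware that the verbatim display above is a slightly loose restatement of the reference.
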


The following lemma provides estimates on the entries of $ (1-M^2  S )^{-1}$ and $ \left(1-S |M|^2\right)^{-1}S$. It will be used in the proof of Lemma \ref{bneng} and Theorem \ref{LLniu}, and its proof is delayed until Section \ref{matrix-norm}.

\begin{lemma}\label{maxnorm}
 Suppose that the assumptions for the strong form of Theorem \ref{LLniu}, i.e., \eqref{mouyzz}, \eqref{mouyzz2} and \eqref{jxw0}, hold.  If $\wt z$ satisfies 
$$ \re \wt z=e, \quad 0\le \im \wt z\le \im z, $$ 
then we have for $M\equiv M_\zeta^{\b g}( z, \wt z)$ and $S\equiv S_{\zeta}$,
 \be
 \left[ (1-M^2  S )^{-1} \right]_{ij} =
 \begin{cases}
 \delta_{ij}+{\rm O}( W^{-1}),\quad & {\rm  if} \quad |i-j|\le (\log N)^2W\\
 {\rm O}(N^{-c\log N}),\quad & {\rm  if} \quad |i-j|> (\log N)^2W
 \end{cases},
 \label{tianYz}
 \ee
and  
\be\label{zlende} 
\left\|\left(1-S|M|^2\right)^{-1}S\right\|_{\max}
= {\rm O} \left( \frac{1}{W\im z}+\frac{N}{W^2} \right). 
\ee
\end{lemma}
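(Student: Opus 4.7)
The plan is to reduce both estimates to Fourier analysis of a constant-diagonal, translation-invariant model and then absorb perturbations via resolvent expansion. By Lemma~\ref{UE}, $M_i = m + O(\zeta + \|\b g\|_\infty + |z-\wt z|)$ uniformly in $i$, where $m := m_{\mathrm{sc}}(\wt z + \ii 0^+)$. A refinement of this, treating $i \le W$ (where the spectral parameter is $z$, with $\im z \ge \eta_*$) and $i > W$ (where it is $\wt z$) separately, yields $|M_i|^2 \le 1 - c\,\im z$ for $i \le W$ and $|M_i|^2 = 1 - O(\im \wt z)$ for $i > W$; in particular $\sum_i (|M_i|^2 - 1) \le -cW\,\im z$, the estimate highlighted in the introduction. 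Similarly, $S = S_\zeta$ agrees with a circulant $S_0$ up to the rank-$W$ correction $-\zeta\Sigma$ of size $O(\zeta/W)$ supported on the $W \times W$ block.

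For \eqref{tianYz}, the key algebraic input is that in the bulk $|e| \le 2-\kappa$ one has $|1 - m^2| \asymp \sqrt{4-e^2} \gtrsim 1$, so the Fourier eigenvalues $1 - m^2 \hat f(k)$ of $1 - m^2 S_0$ are bounded below uniformly. Writing
\[
(1 - m^2 S_0)^{-1} = I + \sum_{n \ge 1} m^{2n} S_0^n,
\]
the kernel $S_0^n$ equals the $n$-fold convolution $f^{\ast n}$, which is supported in $[-nC_s W, nC_s W]$ with $\|f^{\ast n}\|_\infty \le \|f\|_\infty \|f\|_1^{n-1} = O(W^{-1})$. For $|x-y| \le (\log N)^2 W$, truncating the sum at $n \sim (\log N)^2/C_s$ gives the claimed $\delta_{xy} + O(W^{-1})$ bound. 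For $|x-y| > (\log N)^2 W$, only terms with $n \gtrsim (\log N)^2$ contribute; combining the support-based cutoff with the uniform Fourier bound on $(1 - m^2 S_0)^{-1}$ and summing the tail yields the $N^{-c\log N}$ decay. The perturbations $M - mI$ and $S - S_0$ are then absorbed by a resolvent expansion, which preserves the banded/localized structure of the kernel since both perturbations are either small in operator norm or of rank $W$ and localized.

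For \eqref{zlende}, apply $(1 - S|M|^2)^{-1} S = S(1 - |M|^2 S)^{-1}$. The naive spectral gap of $1 - |m|^2 S_0$ at $k=0$ is $1 - |m|^2 \asymp \im \wt z$, which may vanish, so the constant-diagonal Fourier analysis fails outright. The key step --- and the main obstacle --- is to show that the rank-$W$ negative perturbation $|M|^2 - |m|^2 I$ (of order $\im z$ on the $W$-block) restores an effective low-mode spectral gap of order $\im z$. I would do this via a Schur complement with respect to the first $W$ coordinates, or equivalently by making the inhomogeneous random-walk-with-annihilation picture from the introduction rigorous, where the total annihilation budget $-cW\,\im z$ replaces $1 - |m|^2$ in the role of the spectral gap. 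The effective Fourier eigenvalue at mode $k$ then becomes of size $\im z + (kW/N)^2$ for $|k| \lesssim N/W$ and of size $1$ otherwise. Summing $N^{-1}|\hat f(k)|/(\im z + (kW/N)^2)$ over $k$ yields $(W\im z)^{-1}$ from the low-mode regime and $N/W^2$ from the tails and the non-circulant correction $-\zeta \Sigma$, matching \eqref{zlende}.
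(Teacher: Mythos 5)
Your sketch for \eqref{tianYz} is essentially the paper's approach: Neumann-expand $(1-m^2S_0)^{-1}$ using the $L^\infty\to L^\infty$ contraction bound of Lemma~\ref{lem_m0}, obtain exponential off-diagonal decay from the finite band support, then perturb $m\to M$ and $S_0\to S$. The paper does the same (via the $(m^2S_0+\tau)/(1+\tau)$ device to get strict contraction), so that part is fine.

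For \eqref{zlende} there is a genuine gap. You identify the right obstacle---$1-|m|^2S_0$ has a vanishing $k=0$ gap when $\im\wt z\to0$, and the $W$-block where $|M_i|^2$ drops below $1-c\,\im z$ must restore stability---but your stated effective eigenvalue ``$\im z+(kW/N)^2$'' is off by a factor $N/W$: a perturbation of size $\im z$ supported on $W$ of the $N$ sites shifts the $k=0$ quadratic form $\langle\mathbf 1,(1-|M|^2S)\mathbf 1\rangle/N$ by only $W\im z/N$, not $\im z$. Your final answer $(W\im z)^{-1}$ in fact corresponds to the gap $W\im z/N$, since the $k=0$ contribution to the diagonal kernel of the inverse is $N^{-1}/\text{gap}$; with gap $\im z$ you would get $(N\im z)^{-1}$, so the intermediate step and the conclusion are inconsistent with each other. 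More seriously, $|M|^2-|m|^2$ is not circulant---it is a localized perturbation concentrated in a window of size $O(W\log N)$---so the Fourier basis does not diagonalize $1-|M|^2S$, and this perturbation is not small relative to the tiny $k=0$ gap it is meant to create. Neither the Schur complement nor the random-walk heuristic is carried out, and the paper's introduction explicitly says it does \emph{not} make the latter rigorous. The paper's actual argument in Section~\ref{matrix-norm} avoids diagonalization entirely: from $(1-|M|^2S)\mathbf u^0 = S\mathbf e_{i_0}$, testing against $\mathbf u^0|M|^{-2}$ yields the quadratic-form identity \eqref{nhx}, which splits into a localized ``Schr\"odinger operator'' $H=H_0+H_1$ on a window $\mathbb T$ of size $(\log N)^4W$ plus a global Dirichlet form $\tfrac14\sum S_{ij}(\mathbf u^0_i-\mathbf u^0_j)^2$. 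Temple's inequality lower-bounds $H\geq c\,\im z(\log N)^{-4}$, using $\langle H_1\rangle_{\phi_0}\gtrsim \im z(\log N)^{-4}$ (from the mass estimate of Lemma~\ref{lem63}, which is where your $\sum_i(|M_i|^2-1)\lesssim -W\im z$ actually gets used) and the spectral gap $E_1(H_0)\gtrsim(\log N)^{-13}$ from a Poincar\'e-type chain estimate (Lemma~\ref{small lem}). Finally, decomposing $\mathbf u^0=u\mathbf 1+\widetilde{\mathbf u}$, \eqref{yjzzex} gives $(W\im z)^{-1}$ when the mean dominates, while a discrete Sobolev chain inequality gives $\|\mathbf u^0\|_\infty^2\lesssim (N/W^2)\sum S_{ij}(\mathbf u^0_i-\mathbf u^0_j)^2$, hence $N/W^2$, when the fluctuation dominates. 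In short: your proposal states the correct target bound but does not derive it, and the one concrete mechanism it offers has the wrong scaling.
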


Now we can give the proof of Lemma \ref{bneng}.

\begin{proof}[Proof of Lemma  \ref{bneng}] \    The following proof is fairly  standard in random matrix theory and we will omit some details.  For simplicity, we drop $\zeta$ and $\b g$ in superscripts.  Using \eqref{sq root formula}, we have 
$
G_{ij}=-G_{ii}\sum_{k}^{(i)}H_{ik}G^{(i)}_{kj}
$ for $i\ne j$. 
Since the elements in $\{H_{ik}\}_{k=1}^N$ are independent of $G^{(i)}$, by the standard large deviations estimates in Lemma \ref{large_deviation}, 
we have that for any fixed $\tau>0$ and $D>0$, 
\be\label{miny}
\P\left(|G_{ij}|^2\le N^\tau |G_{ii}|^2  \sum_{k}S_{ik}|G^{(i)}_{kj}|^2\right)\ge1- N^{-D},\quad i\ne j .
 \ee
Since  $G_{ii}\asymp 1$ in $\Omega$,  \eqref{miny} implies that 
$$
\P\left({\bf 1}_{\Omega}|G_{ij}|^2 =\OO_\tau \left(\sum_{k}S_{ik}|G^{(i)}_{kj}|^2\right) \right)\ge1- N^{-D},\quad i\ne j .
$$
By  \eqref{Gij Gijk}, the definition of $T$ in \eqref{def: T}, and the bound for $T$ in \eqref{cllo}, we have 
 $$
 \sum_{k}S_{ik}|G^{(i)}_{kj}|^2\le 2 \sum_{k}S_{ik}|G _{kj}|^2+2\sum_{k}S_{ik}\frac{|G_{ki}G_{ij}|^2}{|G_{ii}|^2}= \OO (\Phi^2) 
 \quad \hbox{in}\; \Omega.
 $$
 Therefore, we obtain \eqref{34} for the $i\ne j$ case.  
 
 For the diagonal case, we define 
 $$
 \cal Z_i:= Q_i\left(\sum^{(i)}_{kl}H_{ik}H_{il}G^{(i)}_{kl}\right)-H_{ii}.
 $$
Using \eqref{sq root formula2}, \eqref{Gij Gijk}, the off-diagonal case for \eqref{34} we just proved, and the standard large deviations estimates in Lemma \ref{large_deviation}, 
we can get that for any fixed $\tau>0$, 
 $$
  \frac 1{G_{ii}} \;=\;  -z\mathds{1}_{i\in\llbracket 1,W\rrbracket} - \wt z\mathds{1}_{i\notin\llbracket 1,W\rrbracket}-{g}_i-\sum_{j} S_{ ij}G _{jj} 
 -\cal Z_i+\OO_\tau(\Phi^2)  , \quad \text{with }\cal Z_i =\OO_\tau \left( \Phi  \right),
 $$
holds with high probability in $\Omega$. With the definition of $M_i$ in \eqref{falvww}, we have 
 $$
 G^{-1}_{ii}-M_i^{-1}=-\sum_{j} S_{ ij}\left(G _{jj}-M_j\right)    +\OO_\tau(\Phi )  ,\quad \text{{\it w.h.p.} in }\Omega,
 $$ 
 which implies 
 $$
 M_i-G_{ii}=-\sum_{j} M_i^2 S_{ ij}\left(G _{jj}-M_j\right)    + \OO_\tau(\Phi )  + \OO \left(\max_i |G_{ii}-M_i|^2\right),\quad \text{{\it w.h.p.} in }\Omega.
 $$
 We rewrite the above estimate as
  $$
  \sum_j (1-M^2S)_{ij}\left(G _{jj}-M_j\right) =\OO_\tau(\Phi )  +\OO \left(\max_i |G_{ii}-M_i|^2\right).
  $$
Then with \eqref{tianYz} and the first bound in \eqref{cllo}, we can get \eqref{34} for the diagonal entries and complete the proof of Lemma \ref{bneng}
  \end{proof}
  
\subsection{The $T$-equation estimate.}\
A key component for the proof of Theorem \ref{LLniu} is the self-consistent equation for the $T$ variables. It leads to a self-improved bound on $\|G-M\|_{\max}$. This kind of approach was also used in \cite{ErdKnoYauYin2013-band} to prove a weak type delocalization result for random band matrices. To help the reader understand the proof, we first prove a weak $T$-equation estimate, i.e. Lemma \ref{YEniu1}, which will give the weak form of Theorem \ref{LLniu}. The stronger $T$-equation estimate will be stated in Lemma \ref{YEniu}, and its proof is put in the companion paper \cite{PartIII}.

\begin{lemma}[Weak $T$-equation estimate] \label{YEniu1} 
Under the assumptions of Theorem \ref{LLniu} (i.e., \eqref{mouyzz}, \eqref{mouyzz2}, \eqref{jxw0} and the assumption on $e$), the following statements hold provided  $\e_*>0$ is a sufficiently small constant.      
  Let  $\wt z$ satisfy 
  \be\label{xiazhou}
  \re \wt z=e, \quad N^{-10}\le \im \wt z\le \im z, 
   \ee
and $\Phi$ be any deterministic parameter satisfying 
$$
   W^{-1}\le   \Phi^2\le N^{- \delta} 
$$ 
for some fixed $\delta > 0$. Fix some $z$ and $\wt z$ (which can depend on $N$). If for any constants $\tau'>0$ and $D'>0$, 
  \begin{align}\label{GM11}
\mathbb P\left( \|G(z,\wt z) -M(z,\wt z)\|_{\max} \ge N^{\tau'}\Phi \right) \le N^{-D'},
    \end{align}
then for any fixed (small) $\tau>0$ and (large) $D>0$, we have
\be\label{GM21}
\mathbb P\left( \|T(z,\wt z) \|_{\max} \ge  N^\tau(\Phi_\#^{w})^2\right) \le N^{-D},  \quad  (\Phi_\#^{w})^2:= \left( \frac{N}{W\im z}+\frac{N^{2}}{W^2} \right) (\Phi^3+ N^{-1}) . 
\ee
 Furthermore, if the parameter $\Phi$  satisfies  
\be\label{46}
 \Phi \le  \min\left\{ \frac{W}{N^{1+\e_* + \e^*}} ,\frac{W^2}{N^{2+ \e^*}} \right\}, 
\ee
 then for any fixed $\tau>0$ and $D>0$ we have 
\be\label{44}
\|G(z,\wt z)-M(z,\wt z)\|_{\max}  \le  \Phi N^{ -\frac1{3}\e^*   } + N^{ \tau}\left(\frac{1}{\sqrt{W\im z}}+ \frac{N^{1/2}}{W}\right) 
\ee
with probability at least $1- N^{-D}$.
\end{lemma}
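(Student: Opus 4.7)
The plan is to derive a self-consistent equation for the matrix $T$ (the ``$T$-equation''), control its inhomogeneity via fluctuation averaging using the input \eqref{GM11}, invert it using the sharp max-norm bound on $(1-S|M|^2)^{-1}S$ from Lemma \ref{maxnorm}, and finally upgrade the resulting $T$-bound to a bound on $\|G-M\|_{\max}$ by feeding it into the self-consistent equation estimate, Lemma \ref{bneng}.

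\emph{Derivation of the $T$-equation.} For $k\neq i$, the resolvent identity \eqref{sq root formula} gives $G_{ki}=-G_{kk}\sum_p^{(k)}H_{kp}G^{(k)}_{pi}$, so
\[
|G_{ki}|^2 \;=\; |G_{kk}|^2\sum_{p,q}^{(k)} H_{kp}H_{kq}\,G^{(k)}_{pi}\,\overline{G^{(k)}_{qi}}.
\]
Separating this sum into its diagonal ($p=q$) and off-diagonal ($p\neq q$) parts, replacing $H_{kp}^2$ by $S_{kp}$ and $|G_{kk}|^2$ by $|M_k|^2+O(\Phi)$ (on the high-probability event supplied by \eqref{GM11}), and using \eqref{Gij Gijk} to pass from $G^{(k)}$ back to $G$, one arrives at
\[
T_{ai} \;=\; |M_i|^2 S_{ai} + (S|M|^2 T)_{ai} + \mathcal E_{ai},
\]
whose error is dominated by the centered fluctuation
\[
\sum_k S_{ak}\,|M_k|^2\Bigl[\,\sum_p^{(k)}(H_{kp}^2-S_{kp})|G^{(k)}_{pi}|^2 + \sum_{p\neq q}^{(k)} H_{kp}H_{kq}\,G^{(k)}_{pi}\,\overline{G^{(k)}_{qi}}\,\Bigr]
\]
together with lower-order corrections produced by the replacements $G_{kk}\to M_k$ and $G^{(k)}\to G$.

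\emph{Fluctuation averaging — the main obstacle.} Each bracketed quantity is centered with respect to the $k$-th row of $H$, and Lemma \ref{large_deviation} yields the single-$k$ bound $\prec T^{(k)}_{ki}\prec \Phi^2+W^{-1}$. The weights $S_{ak}$ average $\asymp W$ such almost-independent quantities, so the standard high-moment fluctuation averaging argument of \cite{ErdKnoYauYin2013,ErdYauYin2012Univ} converts this centered structure into the improved bound
\[
\|\mathcal E\|_{\max} \;\prec\; \Phi^3 + N^{-1}.
\]
The crux is genuinely obtaining the extra factor $\Phi$ (versus the naive $\Phi^2$): Ward's identity, which is the usual device for producing such a gain in the resolvent setting, is unavailable here because $\im\wt z$ may vanish, so the moment method is the only available tool.

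\emph{Inversion and conclusion.} Write $D:=|M|^2_{\mathrm{diag}}$; fixing the column $i$, the $T$-equation reads $(1-SD)T_{\cdot i} = SD\,e_i + \mathcal E_{\cdot i}$, and applying $(1-SD)^{-1}=1+(1-SD)^{-1}SD$ yields
\[
T_{ai} \;=\; |M_i|^2\bigl[(1-SD)^{-1}S\bigr]_{ai} + \mathcal E_{ai} + \bigl[(1-SD)^{-1} S\cdot (D\mathcal E)\bigr]_{ai}.
\]
Since Lemma \ref{maxnorm} gives $\|(1-SD)^{-1}S\|_{\max}=O(\tfrac1{W\im z}+\tfrac{N}{W^2})$, bounding the $\ell^1$-length of the $k$-summation in the last term by $N\|\mathcal E\|_{\max}$ and combining with the fluctuation-averaging bound yields
\[
\|T\|_{\max} \;\prec\; \tfrac{1}{W\im z}+\tfrac{N}{W^2} + \Bigl(\tfrac{N}{W\im z}+\tfrac{N^2}{W^2}\Bigr)\bigl(\Phi^3+N^{-1}\bigr) \;\lesssim\; (\Phi_\#^{w})^2,
\]
which is \eqref{GM21} (the deterministic leading part is absorbed using $\Phi^3+N^{-1}\ge N^{-1}$). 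Feeding this into Lemma \ref{bneng} with $\Phi_\#^{w}$ in place of $\Phi$ gives $\|G-M\|_{\max}\prec \Phi_\#^{w}$; plugging \eqref{46} together with $\im z\ge N^{-\epsilon_*}$ into the elementary decomposition
\[
\Phi_\#^{w} \;\le\; \sqrt{\tfrac{N}{W\im z}}\,\Phi^{3/2}+\tfrac{N}{W}\Phi^{3/2}+\tfrac{1}{\sqrt{W\im z}}+\tfrac{N^{1/2}}{W}
\]
shows that each of the first two summands is $\le \Phi\,N^{-\epsilon^*/2}$, which yields \eqref{44}.
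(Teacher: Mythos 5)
Your proposal is correct and follows essentially the same route as the paper's proof: the same $T$-equation, the same decomposition of the error into non-centered corrections of size $\Phi^3$ plus a centered part gaining one factor of $\Phi$ by moment-method fluctuation averaging (the paper's Lemmas \ref{Ppart2} and \ref{Qpart2}), the inversion bound \eqref{zlende} from Lemma \ref{maxnorm}, and the final upgrade via Lemma \ref{bneng} under \eqref{46}. The only difference is bookkeeping: you average the fluctuation against the band weights $S_{ak}$ \emph{before} inverting $1-S|M|^2$ and then pay the factor $N\|\cdot\|_{\max}$, whereas the paper starts from the exact inverted identity \eqref{Tequation} and averages against the rows of $(1-S|M|^2)^{-1}S$ normalized to ${\rm O}(N^{-1})$; both orderings yield the same $(\Phi_\#^{w})^2$ since $\Phi^2\ge W^{-1}$.
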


\begin{remark}\label{remarkY}
The above statements should be understood as follows. For any small constant $\tau>0$ and large constant $D>0$, (\ref{GM21}) and (\ref{44}) hold if (\ref{GM11}) holds for some constants $\tau',D'$ that depend on $\tau$ and $D$. In general, we need to take $\tau'<\tau$ to be sufficiently small and $D'>D$ to be sufficiently large. Compared with Lemma \ref{bneng}, we lose a much ``larger" portion of the probability set. Hence Lemma \ref{YEniu1} can only be iterated for ${\rm O}(1)$ number of times, while Lemma \ref{bneng} can be applied for ${\rm O}(N^C)$ times for any fixed $C>0$.
\end{remark}

\begin{proof}[Proof of Lemma \ref{YEniu1}] \ 
From the defining equation \eqref{def: T} of $T$,  we add and subtract $\sum_{k}S_{ik}|M_k|^2T_{kj} $ 
so that      
$$
T_{ij} 
=\sum_{k}S_{ik}|M_k|^2T_{kj} +\sum_{k}S_{ik}\left( |G_{kj}|^2-|M_k|^2T_{kj}\right).
$$
Therefore, we have
\begin{equation}\label{Tequation}
T_{ij}=\sum_{k}\left[\left(1-S|M|^2\right)^{-1}S\right]_{ik}\left(  |G_{kj}|^2-|M_k|^2T_{kj} \right).
\end{equation}
Isolating the diagonal terms,  we can write the $T$-equation as 
\be\label{T0}
T_{ij}=T_{ij}^{\,0}+\sum_{k\ne j }\left[(1-S|M|^2)^{-1}S\right]_{ik}\left(  |G_{kj}|^2-|M_k|^2T_{kj} \right)
,\quad T_{ij}^{\,0}:=\left[(1-S|M|^2)^{-1}S\right]_{ij}\left(  |G_{jj}|^2-|M_j|^2T_{jj} \right). 
\ee
By the definition of $T$, the assumption \eqref{GM11} and the estimate \eqref{bony} on $M_i$, we can get the simple bounds $G_{jj}={\rm O}(1)$ and $T_{jj}=\OO_\tau(\Phi^2)$. Applying these bounds to the definition of $T_{ij}^{\,0}$, we get 
\be\label{Tij0}
T_{ij}^{\,0}=\OO \left(\left[(1-S|M|^2)^{-1}S\right]_{ij}\right), 
\ee
which will be shown to be the main term of $T_{ij}$ up to an $N^\tau$ factor. By \eqref{zlende} and the condition \eqref{mouyzz2} on $\im z$, we have
\be\label{43}
\left[(1-S|M|^2)^{-1}S\right]_{ij}= {\rm O} \left( \frac{1}{W\im z}+\frac{N}{W^2} \right).
\ee
\begin{definition}[$\E_k$, $P_k$ and $Q_k$] We define $\E_k$ as the partial expectation with respect to the $k$-th row and column of $H$, i.e. $\E_k(\cdot) := \E(\cdot|H^{[k]})$.  For simplicity, we will also use the notations
\be\label{defPQ}
P_k :=\E_k , \quad Q_k := 1-\E_k.
\ee
 \end{definition}

Using this definition and the  bound \eqref{43}, we  rewrite   the off-diagonal  terms in \eqref{T0} into two parts: 
\be\label{damen}
\begin{split}
& \sum_{k\ne j }\left[(1-S|M|^2)^{-1}S\right]_{ik}\left(  |G_{kj}|^2-|M_k|^2T_{kj} \right) \\
& = \left( \frac{N}{W\im z}+\frac{N^2}{W^2} \right) 
 \left( \sum_{k\ne j} c_k\left(\mathbb E_k |G_{k j}|^2-|M_k|^2T_{k j}\right) + \sum_{k\ne j} c_k Q_k |G_{k  j}|^2  \right),
 \end{split}
\ee
where $c_k$ is a sequence of deterministic numbers satisfying 
$$
c_k:=\left[(1-S|M|^2)^{-1}S\right]_{ik} \left( \frac{N}{W\im z}+\frac{N^2}{W^2} \right)^{-1}={\rm O}(N^{-1}). 
$$
The following two lemmas provide estimates for the two parts in \eqref{damen}, where Lemma \ref{Qpart2} is a standard fluctuation averaging lemma.

\begin{lemma}\label{Ppart2} Suppose that $b_k$, $k\in \Z_N$, are deterministic coefficients satisfying $\max_k |b_k|={\rm O}(N^{-1})$.   Then under the assumptions of Lemma \ref{YEniu1}, we have that for any fixed (small) $\tau>0$, 
\be\label{eqn-Ppart2}
 \sum_{k\ne j} b_k\left(\mathbb E_k |G_{k j}|^2-|M_k|^2T_{k j}\right) 
=  \OO_\tau \left(\Phi^3  \right) ,\quad  j\in \Z_N, 
\ee
with high probability.
\end{lemma}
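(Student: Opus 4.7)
The plan is to prove, for each $k \ne j$, the pointwise estimate
\[
\E_k |G_{kj}|^2 - |M_k|^2 T_{kj} \;=\; \OO_\tau(\Phi^3) \quad \text{w.h.p.,}
\]
and then obtain \eqref{eqn-Ppart2} by summing over $k$ using $\sum_k |b_k| = O(1)$ together with a union bound. The starting point is the resolvent identity \eqref{sq root formula}, which for $k \ne j$ gives $G_{kj} = -G_{kk}\sum_l^{(k)} H_{kl} G_{lj}^{(k)}$. Squaring, using the independence of $\{H_{kl}\}_{l \ne k}$ from the $\E_k$-measurable minor $G^{(k)}$ together with $\E_k[H_{kl}H_{km}] = S_{kl}\delta_{lm}$ for $l,m \ne k$, leads to the clean decomposition
\[
\E_k |G_{kj}|^2 \;=\; |M_k|^2 \sum_{l}^{(k)} S_{kl}\, |G_{lj}^{(k)}|^2 \;+\; \E_k\bigl[(|G_{kk}|^2 - |M_k|^2)\, Q_k\bigr], \qquad Q_k := |G_{kj}/G_{kk}|^2.
\]

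I would then bound the two resulting errors separately. For the second summand, the hypothesis \eqref{GM11} and $|M_k| \asymp 1$ (Lemma \ref{UE}) give $|G_{kk}| \asymp 1$ w.h.p., hence $\bigl||G_{kk}|^2 - |M_k|^2\bigr| = \OO_\tau(\Phi)$ and $|Q_k| = \OO_\tau(\Phi^2)$ w.h.p.; the product is $\OO_\tau(\Phi^3)$ w.h.p., and a Markov-type argument (using the deterministic polynomial bound $|G_{ab}| \le (\im \tilde z)^{-1} \le N^{10}$ coming from $\im \tilde z \ge N^{-10}$) promotes this to the same bound on the conditional expectation. For the first summand, I would convert $\sum_l^{(k)} S_{kl} |G_{lj}^{(k)}|^2$ into $T_{kj}$ via \eqref{Gij Gijk}, i.e.\ $G_{lj}^{(k)} = G_{lj} - G_{lk} G_{kj}/G_{kk}$. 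For $l \notin \{j,k\}$ all three factors are off-diagonal of size $\OO_\tau(\Phi)$, yielding $\bigl||G_{lj}^{(k)}|^2 - |G_{lj}|^2\bigr| = \OO_\tau(\Phi^3)$ w.h.p. For $l = j$ one only has $\OO_\tau(\Phi^2)$, but this is weighted by $S_{kj} = O(W^{-1})$ and absorbed into $\OO_\tau(\Phi^3)$ via the hypothesis $\Phi \ge W^{-1/2}$ (hence $W^{-1} \le \Phi^2$). The $l = k$ term present in $T_{kj}$ but missing from $\sum_l^{(k)}$ contributes $S_{kk}|G_{kj}|^2 = O(W^{-1})\OO_\tau(\Phi^2) = \OO_\tau(\Phi^3)$. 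Combining these yields the desired pointwise bound.

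The main technical point is the Markov-type argument that promotes a high-probability bound on the random variable $(|G_{kk}|^2 - |M_k|^2) Q_k$ to the same bound on its conditional expectation $\E_k[\,\cdot\,]$. Splitting along the good event on which the $\OO_\tau(\Phi^3)$ bound holds pointwise versus its complement (of probability $\le N^{-D'}$ for arbitrarily large $D'$), the good piece contributes $\OO_\tau(\Phi^3)$ after $\E_k$, while the bad piece is controlled by $N^{O(1)} \cdot \P_k(\text{bad})$, which is itself $\OO_\tau(\Phi^3)$ w.h.p.\ after applying Markov to the deterministic probability bound and choosing $D'$ sufficiently large. Apart from this passage from ordinary to conditional expectation, the entire argument is a direct application of the resolvent identities \eqref{Gij Gijk}--\eqref{sq root formula} together with the entrywise a priori estimate supplied by \eqref{GM11}.
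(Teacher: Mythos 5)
Your proposal is correct and follows essentially the same route as the paper's proof: express $G_{kj}$ via \eqref{sq root formula}, pull out $|M_k|^2$ in place of $|G_{kk}|^2$ at cost $\OO_\tau(\Phi^3)$, apply $\E_k$ to get $\sum_l^{(k)} S_{kl}|G_{lj}^{(k)}|^2$, and convert back to $T_{kj}$ via \eqref{Gij Gijk}. The only difference is presentational: you make the Markov-type passage from high-probability bounds to bounds on $\E_k[\,\cdot\,]$ explicit, and you spell out the boundary cases $l=j$ and $l=k$, which the paper leaves implicit.
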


 \begin{proof}
By \eqref{sq root formula} and \eqref{GM11}, we have $-\sum_l^{(k)} H_{kl}G^{(k)}_{lj} = G_{kj}/ G_{kk} =\OO_\tau(\Phi)$ and $G_{kk}-M_k=\OO_\tau(\Phi)$ ({\it w.h.p.}). Then we can obtain that for $k \not = j$, 
\be\label{step1}
\mathbb E_k |G_{k j}|^2=\mathbb E_k  |M_k|^2 \left|\sum_l^{(k)} H_{kl}G^{(k)}_{lj}\right|^2 +\OO_\tau (\Phi^3)
= |M_k|^2 \sum_{l}^{(k)}s_{kl}\left|G^{(k  )}_{lj}\right|^2 + \OO_\tau (\Phi^3) 
\ee 
with high probability. Using \eqref{Gij Gijk}, we have 
$$G^{(k)}_{lj}=G _{lj}+\OO_\tau(|G_{lk}||G_{ kj }|)=G _{lj}+\OO_\tau(\Phi^2), \quad  l,j\ne k,
$$
with high probability. Inserting it into \eqref{step1} and using the definition \eqref{def: T}, we can obtain \eqref{eqn-Ppart2}.
\end{proof}

\begin{lemma}\label{Qpart2} Suppose that $b_k$, $k\in \bZ_N$ are deterministic coefficients satisfying $\max_k |b_k|={\rm O}(N^{-1})$. Then under the assumptions of Lemma  \ref{YEniu1}, we have for any fixed (large) $p\in 2\mathbb N$ and (small) $\tau>0$,  
\be\label{eqn-Qpart2}
\mathbb E\left|\sum_{k\ne j} b_k  Q_k |G_{kj}|^2\right|^p \le \left(N^{\tau} \Phi^{ 3} \right)^p,\quad  j\in \Z_N.
\ee
\end{lemma}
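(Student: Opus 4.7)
The plan is to use the standard fluctuation averaging scheme developed in \cite{ErdYauYin2012Univ,ErdKnoYauYin2013}. The starting point is the moment expansion
\[
\mathbb E\bigg|\sum_{k\ne j} b_k Q_k |G_{kj}|^2\bigg|^p \;=\; \sum_{k_1,\ldots,k_p}\Big(\prod_{i=1}^p b_{k_i}\Big)\mathbb E\prod_{i=1}^{p/2}Q_{k_i}|G_{k_ij}|^2\prod_{i=p/2+1}^{p}\overline{Q_{k_i}|G_{k_ij}|^2}.
\]
I would organize the sum according to the partition $\Pi=\Pi(k_1,\ldots,k_p)$ of $\{1,\ldots,p\}$ induced by the equivalence $i\sim i'\iff k_i=k_{i'}$, keeping track of the number $r=|\Pi|$ of distinct values and the number $s$ of singleton blocks. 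The partitions with $r\leq p-1$ already gain at least one factor of $N^{-1}$ from the $b_{k_i}$ constraint; the hard case is when many $k_i$ are singletons.

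For each factor, I would use the resolvent identity $G_{kj}=-G_{kk}\sum_{l}^{(k)}H_{kl}G^{(k)}_{lj}$ together with $G_{kk}=M_k+\OO_\tau(\Phi)$ from \eqref{GM11} to express $|G_{k_ij}|^2$ as a polynomial in $\{H_{k_il}\}_l$ whose coefficients are entries of the minor $G^{(k_i)}$. The action of $Q_{k_i}$ then annihilates every piece that is $H^{[k_i]}$-measurable, so only terms with at least one surviving $H_{k_i\cdot}$-factor (e.g.\ of the form $H_{k_il}^2-s_{k_il}$ or $H_{k_il}H_{k_il'}$ with $l\ne l'$) contribute. Iterating \eqref{Gij Gijk} to rewrite all $G^{(k_i)}$ as $G^{(\mathbb T)}$ with $\mathbb T\supseteq\{k_1,\ldots,k_p\}$ decouples the dependence across the $p$ factors, and the large deviation bounds of Lemma \ref{large_deviation}, together with the input $\sum_l s_{kl}|G^{(\mathbb T)}_{lj}|^2=\OO_\tau(\Phi^2)$ (which follows exactly as in \eqref{step1} using $|G_{ij}|=\OO_\tau(\Phi)$ for $i\ne j$ and \eqref{Gij Gijk}), give $|Q_{k_i}|G_{k_ij}|^2|=\OO_\tau(\Phi^2)$ in general, and the improved bound $\OO_\tau(\Phi^3)$ whenever $k_i$ is a singleton in $\Pi$: any contribution from a singleton block must pair its surviving $H_{k_i\cdot}$-factor against an $H_{k_i\cdot}$-factor generated by expanding some other minor $G^{(k_{i'})}$, and this pairing costs an extra $\Phi$ via Lemma \ref{large_deviation}.

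Collecting these estimates, the contribution of a partition with $r$ blocks and $s$ singletons is bounded by
\[
N^r\cdot\bigl(\max_k|b_k|\bigr)^p\cdot\bigl(N^\tau\Phi^2\bigr)^{p-s}\cdot\bigl(N^\tau\Phi^3\bigr)^{s}\;\lesssim\;N^{\OO(\tau p)}\,N^{r-p}\,\Phi^{2p+s}.
\]
Since non-singleton blocks contain at least two elements, $r\le s+(p-s)/2=(p+s)/2$, so $N^{r-p}\le N^{(s-p)/2}\le\Phi^{p-s}$ by the assumption $\Phi\ge W^{-1/2}\ge N^{-1/2}$. Thus each partition contributes at most $N^{\OO(\tau p)}\Phi^{3p}$, and summing over the $\OO_p(1)$ many partitions yields \eqref{eqn-Qpart2} after absorbing the $\tau$-loss.

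The main obstacle is making the ``extra $\Phi$ per singleton'' step rigorous: this requires carefully tracking how $Q_{k_i}$ interacts with the $H_{k_i\cdot}$-dependence of the other $G^{(k_{i'})}$ factors after iterated minor expansions, propagating the bound $\sum_l s_{kl}|G^{(\mathbb T)}_{lj}|^2=\OO_\tau(\Phi^2)$ uniformly over all $|\mathbb T|\le p$, and ensuring that the error terms from stopping the minor expansion after finitely many steps remain of size at most $\Phi^3$. These steps are handled by the combinatorial bookkeeping of \cite{ErdKnoYauYin2013}, which we import directly.
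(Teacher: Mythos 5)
Your proposal follows essentially the same strategy as the paper: both are the standard fluctuation-averaging argument of \cite{ErdKnoYauYin2013}, via a $p$th-moment expansion, organization by coincidence patterns of the $k_i$'s, iterated minor expansion in the $Q$-projected resolvent entries, and the regularity input $\Phi\ge W^{-1/2}\ge N^{-1/2}$ to absorb the combinatorial entropy. The main difference is in organization: the paper isolates a single clean pointwise claim, $|Q_A X_k|=\OO_\tau(\Phi^{|A|+1})$ for $k\in A$, proves it by repeatedly inserting \eqref{Gij Gijk} and killing the leading $|G^{(\mathbb T)}_{kj}|^2$ term at each stage, and then delegates the counting of $(A_s)$ families to (B.21)--(B.24) of \cite{ErdKnoYauYin2013}; you instead keep the partition combinatorics explicit ($N^r$, $s$ singletons, $N^{r-p}\le\Phi^{p-s}$) and attribute an extra $\Phi$ directly to each singleton factor. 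One caveat about your phrasing: ``$|Q_{k_i}|G_{k_ij}|^2|=\OO_\tau(\Phi^3)$ for singleton $k_i$'' is \emph{not} a pointwise high-probability bound --- for $k\ne j$ the best pointwise estimate is $\OO_\tau(\Phi^2)$, matching the paper's claim with $|A|=1$. The extra $\Phi$ per lone index only materializes inside the expectation: for a nonzero term, that singleton index must be absorbed into some $A_s$, increasing $|A_s|$ by one and so boosting the exponent of the \emph{other} factor. You do gesture at the right mechanism (``must pair its surviving $H_{k_i\cdot}$ factor against an $H_{k_i\cdot}$ factor generated by expanding some other minor''), and you flag this as the main obstacle to be handled via \cite{ErdKnoYauYin2013}, so I read this as a presentation issue rather than a gap. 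The net bookkeeping $N^{r-p}\Phi^{2p+s}\cdot\Phi^{p-s}=\Phi^{3p}$ is correct and matches what the paper obtains.
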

\begin{proof}
Our proof follows the arguments in \cite[Appendix B]{ErdKnoYauYin2013}. We consider the decomposition of the space of random variables using $P_k$ and $Q_k$ defined in \eqref{defPQ}. It is evident that $P_k$ and $Q_k$ are projections, $P_k+Q_k=1$, $P_kQ_k=0$, and all of these projections commute with each other. For a set $A\subset \Z_N$, we denote $P_A:=\prod_{k\in A} P_k$ and $Q_A:=\prod_{k\in A} Q_k$. Now fix any $j\in \Z_N$, we set $X_{k}:=Q_{k}|G_{k j}|^2$. Then for $p\in 2\N$, we can write
\begin{equation*}
\begin{split}
\mathbb E \left|\sum_{k\ne j} b_k X_k\right|^p = \sum_{k_1 ,k_2 , \ldots ,k_p }^* c_{\mathbf k} \mathbb E \prod_{s = 1}^p {X_{k_s } } & = \sum_{\mathbf k}^* c_{\mathbf k} {\mathbb E \prod_{s = 1}^p {\left( {\prod_{r = 1}^p {\left( {P_{k_r }  + Q_{k_r } } \right)} X_{k_s } } \right)}  } \\
& = \sum^*_{\mathbf k} c_{\mathbf k} {\sum\limits_{A_1 , \ldots ,A_p  \subset \left[ \mathbf k \right]} \mathbb E \prod_{s = 1}^p {\left( {P_{A_s^c } Q_{A_s} X_{k_s } } \right)} } ,
\end{split}
\end{equation*}
where $\mathbf k:=(k_1,k_2,\ldots, k_p)$, $[\mathbf k]:=\{k_1,k_2,\ldots,k_p\}$, $\sum^*$ means summation with indices not equal to $j$, and $c_{\mathbf k}$ are deterministic coefficients satisfying $c_{\mathbf k}={\rm O}(N^{-p}).$ Then with the same arguments as in \cite{ErdKnoYauYin2013} (more specifically, the ones between (B.21)-(B.24)), we see that to conclude \eqref{eqn-Qpart2}, it suffices to prove that for $k\in A \subset \Z_N \setminus \{j\}$ and any fixed $\tau>0$,
\begin{equation}
\left|Q_A X_k\right| =\OO_\tau\left( \Phi^{|A|+1}\right) \quad w.h.p. \label{claim}
\end{equation}

We first recall the following simple bound for partial expectations, which is proved in Lemma B.1 of \cite{ErdKnoYauYin2013}. 
Given a nonnegative random variable $X$ and a deterministic control parameter $\Psi$ such that $X\le \Psi$ with high probability. Suppose $\Psi\ge N^{-C}$ and $X\le N^C$ almost surely for some constant $C>0$. 
Then for any fixed $\tau>0$, we have
\be\label{partial_P}
\max_i P_i X =\OO_\tau (\Psi) \quad w.h.p.
\ee
In fact, \eqref{partial_P} follows from Markov's inequality, using high-moments estimates combined with the definition of high probability events in Definition \ref{whp} and Jensen's inequality for partial expectations. In the application to resolvent entries, the deterministic bound follows from $\|G\|\le (\im \wt z)^{-1} \le N^{10}$ by \eqref{xiazhou}.

Now the bound \eqref{claim} in the case $|A|=1$ follows from \eqref{partial_P} directly. For the case $|A|=n \ge 2$, we assume without loss of generality that $j=1$, $k=2$ and $A=\{2 , \ldots, n+1\}$. It suffices to prove that
\begin{equation}
Q_{n+1}\cdots Q_3 |G_{21}|^2 =\OO_\tau\left( \Phi^{n+1}\right) .\label{claim2}
\end{equation}
Using the identity \eqref{Gij Gijk}, we can write
$$Q_3 |G_{21}|^2 = Q_3 \left(G_{21}^{(3)}+\frac{G_{23}G_{31}}{G_{33}}\right)\overline{\left(G_{21}^{(3)}+\frac{G_{23}G_{31}}{G_{33}}\right)} = Q_3 \left( \overline{G_{21}^{(3)}} \frac{G_{23}G_{31}}{G_{33}}+ {G_{21}^{(3)}} \overline{\frac{G_{23}G_{31}}{\overline{G_{33}}} }+ \left|\frac{G_{23}G_{31}}{G_{33}}\right|^2\right).$$
Note that the leading term $Q_3 \left|G_{21}^{(3)}\right|^2$ vanishes since $G_{21}^{(3)}$ is independent of the 3rd row and column of $H$, and the rest of the three terms have at least three off-diagonal resolvent entries. We now act $Q_4$ on these terms, apply (\ref{Gij Gijk}) with $k=4$ to each resolvent entry, and multiply everything out. This gives a sum of fractions, where all the entries in the numerator are off-diagonal and all the entries in the denominator are diagonal. Moreover, the leading order terms vanish,
$$Q_4Q_3 \left( \overline{G_{21}^{(34)}} \frac{G_{23}^{(4)}G_{31}^{(4)}}{G_{33}^{(4)}} + {G_{21}^{(34)}}  \overline{\frac{G_{23}^{(4)}G_{31}^{(4)}}{\overline{G_{33}^{(4)}}}}\right)=0,$$
and each of the surviving term has at least four off-diagonal resolvent entries. We then continue in this manner, and at each step the number of off-diagonal resolvent entries in the numerator increases at least by one. Finally, 
$Q_{n+1}\cdots Q_3 |G_{kj}|^2$ is a sum of fractions where each of them contains at least $n+1$ off-diagonal entries in the numerator. Together with \eqref{partial_P}, this gives the estimate \eqref{claim2}, which further proves \eqref{claim}.
\end{proof}

\begin{remark}\label{33}   Lemma \ref{Qpart2} asserts that the $Q_k$ operation yields an improvement by a factor $\Phi$. In fact, for the regular resolvents of band matrices, a stronger version of averaging fluctuation results was proved in \cite{EKY_Average}. We believe that following the methods there, the bounds in Lemma \ref{Ppart2} and Lemma \ref{Qpart2} can be improved to 
\be\label{semi_strong}
\OO_\tau \left( \Phi^{ 4  }+   W^{-1/2}\Phi^2 \right).
\ee
In this paper, however, we will skip the discussion on the strategy in \cite{EKY_Average}, since its proof is rather involved, and more importantly, we will prove an even stronger bound, i.e., \eqref{GM2} below, in Part III of this series \cite{PartIII}. With \eqref{semi_strong}, the $\Phi_\#^{w}$ in \eqref{GM21} can be improved to 
$$
(\Phi_\#^{w})^2 = \left( \frac{N}{W\im z}+\frac{N^{2 }}{W^2} \right) ( \Phi^4 +W^{-1/2}\Phi^2+ {N}^{-1}), 
$$
and the condition \eqref{46} becomes 
\be\label{462}
  \Phi^2 \le  \min\left\{ \frac{W}{N^{1+\e_* + \e^*}} ,\frac{W^2}{N^{2+ \e^*}} \right\} .
\ee
Using this estimate, the conditions \eqref{67} can be weaken to 
\be\log _N W \ge \max\left\{ \frac45 + \epsilon^* , \ \frac23 + \frac23\e_*+ \e^* \right\} .\label{inbetween}\ee
\end{remark}

Now we finish the proof of Lemma \ref{YEniu1}. Using \eqref{damen}, Lemma \ref{Ppart2}, Lemma \ref{Qpart2} and Markov's inequality, we can get that
$$
\sum_{k\ne j}\left[(1-S|M|^2)^{-1}S\right]_{ik}\left(  |G_{kj}|^2-|M_k|^2T_{kj} \right)= \OO_\tau \left(\left( \frac{N}{W\im z}+\frac{N^{2}}{W^2} \right) \Phi^3 \right)
$$ 
with high probability. Note that it only includes the off-diagonal  terms, i.e. $k\ne j$ terms. Now plugging it into the $T$-equation \eqref{T0} and using \eqref{Tij0}, we obtain \eqref{GM21}. 

Finally, we need to prove \eqref{44}. Clearly, if \eqref{46} holds, then $\Phi \le N^{-\delta}$ and $(\Phi_\#^w)^2 \nc\le N^{-2 \delta}$ for some constant $\delta>0$. Thus \eqref{cllo} is satisfied, and then \eqref{44} follows from an application of (\ref{GM21}) and Lemma \ref{bneng}. This completes the proof of Lemma \ref{YEniu1}.   
 \end{proof}

The following lemma gives a stronger form of Lemma \ref{YEniu1}. It will be proved in the companion paper \cite{PartIII}. Here we recall the notation in \eqref{tri}.

 \begin{lemma}[Strong $T$-equation estimate] \label{YEniu} 
 Suppose the assumptions of Theorem \ref{LLniu} (i.e., \eqref{mouyzz}, \eqref{mouyzz2}, \eqref{jxw0} and the assumption on $e$) and \eqref{xiazhou} hold. Let  $\Phi$ and  $\wt\Phi$ be  deterministic parameters satisfying   
  \be\label{GM1.5}
   W^{-1}\le \wt\Phi^2\le  \Phi^2\le  \wt \Phi \le N^{-\delta}
 \ee
for some constant $\delta>0$. Fix some $z$ and $\wt z$ (which can depend on $N$). If for any constants $\tau'>0$ and $D'>0$,
  \begin{align*}
    \P\Big( \|G(z,\wt z) -M(z,\wt z)\|_{\max} \ge N^{\tau'}\Phi\Big)+   \P\left(   \tnorm{G}^2(z,\wt z) \ge N^{1+\tau'} \wt \Phi^2\right)
    \le N^{-D'},
    \end{align*}
 then for any fixed (small) $\tau>0$ and (large) $D>0$, we have 
\begin{equation}\label{GM2}
   \P\left( \|T(z,\wt z) \|_{\max}\ge  N^\tau \Phi_\#^2  \right)\le  N^{-D},\quad 
  \Phi_\#^2:= \left( \frac{N}{W\im z}+\frac{N^{2}}{W^2} \right) \left(\Phi^2 {  \wt \Phi^2}+\Phi^2 {  N^{-1/2}}+N^{-1}\right)   .
\end{equation}
Furthermore, if the parameter $\wt \Phi$  satisfies  
\be\label{lakezz}
 \wt  \Phi^2 \le  \min\left\{ \frac{W}{N^{1+\e_* + \e^*}} ,\frac{W^2}{N^{2+ \e^*}} \right\},
\ee
then for any fixed $\tau>0$ and $D>0$ we have 
\be\label{44st}
\|G(z,\wt z)-M(z,\wt z)\|_{\max}   \le  \Phi N^{ -\frac1{3}\e^*   } + N^{ \tau}\left(\frac{1}{\sqrt{W\im z}}+ \frac{N^{1/2}}{W}\right)  
\ee
with probability at least $1- N^{-D}$.
 \nc \end{lemma}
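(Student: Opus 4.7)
The plan is to mirror the architecture of the proof of Lemma \ref{YEniu1} (the weak $T$-equation estimate), but to upgrade both averaging inputs (Lemmas \ref{Ppart2} and \ref{Qpart2}) to sharper versions that exploit the new $L^2$-type hypothesis $\tnorm{G}^2(z,\wt z) \le N^{1+\tau'}\wt\Phi^2$. The target bound
$\Phi_\#^2 = (N/(W\im z) + N^2/W^2)(\Phi^2\wt\Phi^2 + \Phi^2 N^{-1/2} + N^{-1})$
is obtained from the weak bound $(N/(W\im z) + N^2/W^2)(\Phi^3 + N^{-1})$ by trading one factor $\Phi$ for the smaller $\wt\Phi^2/\Phi$ (or $N^{-1/2}$), and the $L^2$ hypothesis is precisely what allows this trade.

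First I would start from the $T$-equation \eqref{T0} with the isolation of $T_{ij}^0$. The bounds \eqref{Tij0} and \eqref{43} from Lemma \ref{maxnorm} still apply, so the prefactor $N/(W\im z) + N^2/W^2$ is produced exactly as in \eqref{damen}, and $T_{ij}^0$ is absorbed into the $N^{-1}$ summand of $\Phi_\#^2$. I then split the off-diagonal sum as $A_P + A_Q$, where $A_P$ is built from $P_k|G_{kj}|^2 - |M_k|^2 T_{kj}$ and $A_Q$ from $Q_k|G_{kj}|^2$, with weights $b_k = \OO(N^{-1})$.

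Next, for the $P_k$ part I would revisit the derivation of Lemma \ref{Ppart2}. Writing $G_{kj}$ through \eqref{sq root formula} and expanding $G_{lj}^{(k)}$ via \eqref{Gij Gijk}, the discrepancy $P_k|G_{kj}|^2 - |M_k|^2 T_{kj}$ is a sum of off-diagonal error terms of the schematic form $(|G_{kk}|^2 - |M_k|^2)T_{kj}$ and $|M_k|^2\sum_l s_{kl}(|G_{lj}^{(k)}|^2 - |G_{lj}|^2)$, the latter being controlled by $\OO(\Phi\sqrt{T_{kj}T_{kk}} + \Phi^2 T_{kk})$ via Cauchy--Schwarz. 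The point is that $T_{kj}$ is now controlled \emph{on average} by the $L^2$ hypothesis: summing over $k$ gives $\sum_k T_{kj} \le (C_s/W)\tnorm{G}^2 \le (C_s N/W)\wt\Phi^2$, so when paired with the weights $b_k = \OO(N^{-1})$, Cauchy--Schwarz produces $\OO(\Phi\wt\Phi^2)$ where the naive estimate gave $\OO(\Phi^3)$. A separate large-deviation cancellation in the $H$-expansion of $|\sum_l H_{kl}G_{lj}^{(k)}|^2 - \sum_l s_{kl}|G_{lj}^{(k)}|^2$, proved via a moment computation in the style of Lemma \ref{large_deviation}, yields the additional $\Phi^2 N^{-1/2}$ term.

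For the $Q_k$ part I would invoke a strengthened fluctuation-averaging lemma: for $\max_k|b_k| = \OO(N^{-1})$ and any fixed $p \in 2\N$ and $\tau>0$,
\begin{equation*}
\E\Bigl|\sum_{k \ne j} b_k Q_k |G_{kj}|^2\Bigr|^p \le \bigl(N^\tau(\Phi^2 \wt\Phi^2 + \Phi^2 N^{-1/2} + N^{-1})\bigr)^{p/2}.
\end{equation*}
Heuristically, the graphical expansion of $Q_A|G_{kj}|^2$ produced in the proof of Lemma \ref{Qpart2} yields, after $|A|$ $Q$-operations, a sum of fractions whose numerators contain $\ge|A|+1$ off-diagonal resolvent entries; the improvement replaces one of these $\OO(\Phi)$ factors by a factor $\wt\Phi^2/\Phi$ or $N^{-1/2}$ extracted using $\tnorm{G}^2 \le N\wt\Phi^2$. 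The systematic implementation of this requires a substantial graphical bookkeeping, and I would import it as a black box from \cite{PartIII}. Combining the $P_k$ and $Q_k$ estimates with the prefactor from \eqref{43} gives \eqref{GM2}. The second statement \eqref{44st} then follows by observing that under \eqref{lakezz} one has $\Phi_\#^2 \le N^{-2\delta'}$ for some $\delta'>0$, so Lemma \ref{bneng} applies with $\Phi_\#$ in place of $\Phi$, and unpacking $\Phi_\#$ against the bounds \eqref{lakezz}, \eqref{mouyzz}, \eqref{mouyzz2} produces the two summands in \eqref{44st}.

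The principal obstacle is the improved fluctuation-averaging bound in the $Q_k$ step: Lemma \ref{Qpart2} exploits only the $Q_k$ projections to gain a single factor of $\Phi$, and extracting the additional factor $\wt\Phi^2/\Phi$ (or $N^{-1/2}$) from the $L^2$ input requires a new graphical organization of the resolvent expansion --- this is precisely what is developed in Part III \cite{PartIII}. Step 2 is also nontrivial, because one has to carry the $\Phi$-versus-$\wt\Phi$ distinction through the expansion of $\sum_l s_{kl}|G_{lj}^{(k)}|^2$ without losing the saving; but compared with Step 3 it is essentially a careful Cauchy--Schwarz argument built on Lemma \ref{large_deviation}.
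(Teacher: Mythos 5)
The paper does not actually prove Lemma \ref{YEniu} here: its ``proof'' is the single line ``See the proof of Theorem 2.7 in part III of this series \cite{PartIII}.'' So there is no in-paper argument to compare against, and your plan of reusing the $T$-equation machinery from Lemma \ref{YEniu1} and importing the improved fluctuation averaging from \cite{PartIII} is the same high-level route the authors take. Your derivation of \eqref{44st} from \eqref{GM2} via Lemma \ref{bneng}, and your use of \eqref{43} for the prefactor and of $T_{ij}^0$ for the $N^{-1}$ term, are all correct. Where the proposal goes wrong, however, is in the claim that the $P_k$ part reduces to ``essentially a careful Cauchy--Schwarz argument'' on top of Lemma \ref{large_deviation}; this does not close.

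Concretely, two issues. First, the intermediate inequality $\sum_k T_{kj}\le (C_s/W)\tnorm{G}^2$ is off by a factor of $W$: since the columns of $S$ sum to (approximately) $1$, one has $\sum_k T_{kj}=\sum_{k,l}S_{kl}|G_{lj}|^2\approx\sum_l|G_{lj}|^2\le \tnorm{G}^2\le N\wt\Phi^2$, with no $W^{-1}$ gain. You seem to have transferred the per-entry bound $\|T\|_{\max}\le (C_s/W)\tnorm{G}^2$ of \eqref{initial2} to the column sum. Second, even granting the corrected averaging $\frac1N\sum_k T_{kj}\lesssim\wt\Phi^2$, the schematic term $\sum_k b_k(|G_{kk}|^2-|M_k|^2)T_{kj}$ yields only $\OO(\Phi\wt\Phi^2)$ via triangle inequality, and the cross term produces $\sum_k b_k\Phi\sqrt{T_{kj}T_{kk}}=\OO(\Phi^2\wt\Phi)$ via Cauchy--Schwarz; both of these exceed the target $\Phi^2\wt\Phi^2+\Phi^2N^{-1/2}+N^{-1}$ by a factor of at least $\Phi^{-1}$ (respectively $\wt\Phi^{-1}$). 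Absorbing the extra factor requires cancellations in the $k$-average that absolute-value estimates cannot see --- e.g.\ exploiting that the leading part of $|G_{kk}|^2-|M_k|^2$ is mean-zero under $\E_k$, and that the cross term is a real part of an oscillating sum --- and this is precisely the graphical averaging developed in \cite{PartIII}, not a consequence of Lemma \ref{large_deviation}. (A smaller slip: your strengthened $Q_k$-part lemma is stated with exponent $p/2$ on the right, which would give a bound of order $(\Phi^2\wt\Phi^2+\cdots)^{1/2}$ after Markov; the exponent should be $p$, as in Lemma \ref{Qpart2}.) In short, your framing of the split between ``easy $P$'' and ``hard $Q$'' understates the difficulty: both halves need the \cite{PartIII} machinery, not just the $Q$ half.
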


The Remark \ref{remarkY} also applies to this lemma. 
Note that \eqref{44} or \eqref{44st} gives a self-improved bound on $\|G-M\|_{\max}$, which explains how we can improve the estimate on $G$ (from $\Phi$ to $\Phi_\#)$ via $T$ equations. As long as we have an initial estimate such that \eqref{46} or \eqref{lakezz} holds, we can then iterate the proof and improve the estimate on $G$ to $\Phi_{{\rm goal}}=\left(\frac{1}{\sqrt{W\im z}}+ \frac{N^{1/2}}{W}\right) $ in \eqref{jxw}.

\begin{proof}[Proof of Lemma \ref{YEniu}]
See the proof of Theorem 2.7 in part III of this series \cite{PartIII}.
\end{proof}

\section{Proof of  Theorem \ref{LLniu}}

Fix a parameter $0<\e_0<\e_*/5$. We define 
$$
 \wt z_n:= \re \wt z+ \ii N^{-n\e_0 }\im   z,
$$
so that $\im \wt z_{n+1}=N^{-\e_0}\im \wt z_{n}$. The basic idea in proving Theorem \ref{LLniu} is to use mathematical induction on  $n\in \N$.

The proofs of the weak form and strong form of Theorem \ref{LLniu} are completely parallel. In the following proof, we will only remark on the minor differences between them. 

\medskip \noindent 
{\bf Step 0}: The special case with $\wt z=z$ and  $\zeta=0$, $\b g=\b 0$ (i.e. $G(H,z)$ is the ordinary resolvent of a generalized Wigner matrix) was proved in \cite{ErdKnoYauYin2013}.  The proof given there can be carried over to our case without changes under the assumptions of Theorem \ref{LLniu} when $\wt z=z$ and $\im z\ge W^{-1+\delta}$ for some fixed $\delta>0$.   

This gives that 
$$
\P\left( \|G(z, z)-M( z, z)\| _{\max}\ge  \frac{N^{\tau}}{\sqrt{W\im z}}\right)\le N^{-D},
$$ 
 for any fixed $\tau>0$.
This bound is clearly stronger than the one in \eqref{jxw}.

\medskip 

\noindent   {\bf Step 1}: Consider  the case $n=0$, i.e., $G(z, \wt z_0)$, where we have
$$\re \wt z_0=\re \wt z,\quad \im \wt z_0=\im z.$$ 
We claim that for any $w,\wt w\in \C_+$,
\be\label{mnn}
\|G(w,\wt w)\|_{L^2\to L^2}\le \frac{1}{\min (\im w, \im \wt w)}.
\ee
To prove it, we first assume that $\im w = a + \im \tilde w$ with $a \ge 0$. We write 
$$
G(w,\wt w) = (A - \ii a J - \ii\im \tilde w )^{-1} ,  \quad J_{kl}={\bf 1}_{ k\in \llbracket 1, W\rrbracket} \delta_{kl} ,
$$ 
where $A$ is a symmetric matrix. Then 
$$
(A - \ii a J - \ii\im \tilde w )^* (A - \ii a J - \ii\im \tilde w ) = (A - \ii a J  )^* (A - \ii a J )  + 2 a(\im \tilde w) J  + (\im \tilde w)^2 \ge (\im \tilde w)^2.
$$
Obviously, we have a similar estimate with $\im \tilde w$ replaced by $\im w$ when $\im w\le \im \tilde w$. This proves the claim \eqref{mnn}.

Now by the definition of $T$ and \eqref{bandcw1}, we know
$$
  |T_{ij}{ (z, \wt z_0)}|\le \frac{C_s}{W}\sum_{k}|G_{kj}(z,\wt z_0)|^2 = \frac{C_s \im G_{jj}(z,\wt z_0)}{W\im z},
$$
where in the second step we used the so-called Ward identity that for any symmetric matrix $A$ and $\eta>0$,
\be\label{ward}\sum_k |R_{kj}(A,\ii\eta)|^2 = \frac{\im R_{jj}(A,\ii\eta)}{\eta},\quad R(A,\ii\eta):=(A-\ii\eta)^{-1}.
\ee
Obviously, the same argument gives that
\be\label{123}
\|T{ (z, \wt z_0(t))}\|_{\max}\le \frac{C_s \max_j \im G_{jj}(z,\wt z_0(t))}{W\im z}, \quad \wt z_0(t):= (1-t)z+t\wt z_0, \ \ t\in [0,1].
\ee
Now we claim that for any small enough $\tau>0$,
\be\label{1234}
\sup_{s\in [0,1]}  \P\left(\|G(z,\wt z_0(t))-M(z,\wt z_0(t))\|_{\max}\ge \frac{ N^{\tau}}{\sqrt{W\im z}}\right)\le N^{-D}.
\ee
\nc
To prove \eqref{1234}, we first note that for any $w, w'\in \mathbb C$, 
\be\label{coz}
  G(z, w)=G(z, w')+  G(z, w)(w-w')\wt J G(z, w'), 
  \quad \wt J_{kl}={\bf 1}_{k \notin \llbracket 1, W\rrbracket} \delta_{kl}.
\ee
This implies that 
$$
\|\partial_{\wt z}G  (z,\wt z)\|_{\max}\le   \sqrt N \|G(z,\wt z)\|_{L^2\to L^2}  \|G(z,\wt z)\|_{\max}   \le      \frac{  \sqrt N }{\min (\im z, \im \wt z)} \|G\| _{\max}.
$$
In particular, in this step we have 
\be \label{continu}
 \|\partial_{s}G  (z, \wt z_0(t))\|_{\max}\le    C N^{1/2+\e_*}|z-\wt z_0| \|G(z, \wt z_0(t))\| _{\max} . 
\ee
This provides some continuity estimate on $G  (z, \wt z_0(t))$, which shows that \eqref{1234} can be obtained from the following estimate: 
\be \label{zwjj2}
 \max_{k\in \llbracket0,N^5\rrbracket}\P\left( \left\|G (z, \wt z_0(kN^{-5}))-M ( z, \wt z_0(kN^{-5}))\right\| _{\max}\ge  \frac{N^{\tau}}{\sqrt{W\im z}} \right)\le N^{-D}.
 \ee 
 From Step 0,  this estimate  holds for $k=0$. By  induction, we assume that \eqref{zwjj2} holds for $k=k_0$.  Then using \eqref{continu} and \eqref{dozy}, we know that the first estimate of \eqref{cllo} holds for $G(z, \wt z_0(t))$ with $t=(k_0+1)N^{-5}$.  Then by \eqref{123} and applying Lemma \ref{bneng}, we obtain \eqref{zwjj2} for $k=k_0+1$. This completes the proof of \eqref{zwjj2} and \eqref{1234}. Note that the estimate \eqref{1234} applied to $G(z, \wt z_0(1))$ is the result we want for this step

 \medskip 
 \noindent 
  {\bf Step 2}:  
Suppose that for some $n\in \N$ with $\im \wt z_n\ge N^{-10}$, \eqref{jxw} holds for  $G(z, \wt z_n)$  and $M(z, \wt z_n)$ for any large $D>0$. We first prove the following estimate for $G(z, \wt z_{n+1})-M(z, \wt z_{n+1})$, which is weaker than \eqref{jxw}:   
  \be\label{jxw23}
 \P\left(\|G(z,\wt z_{n+1})-M(z,\wt z_{n+1} )\|_{\max}\ge  N^{ \tau}\left(\frac{N^{1/2+\epsilon_0}}{W\sqrt{\im z}}+ \frac{N^{1+\epsilon_0}}{W^{3/2}} \right) \right)\le N^{-D}
  \ee   
  for any fixed $\tau>0$.
  
For any $w, w'\in \mathbb C^+$ satisfying
\be\label{327}
  \re w = \re w', \quad N^{-\e_0}  \im w'  \le  \im w \le \im w',
\ee
using \eqref{327} and \eqref{coz}, we have 
  \begin{align*}
\sum_i  |G_{ij}(z,w)|^2  & \le 2\Big(1+  |w-w'|^2
  \left\|G(z, w)\right\|^2_{L^2\to L^2}\Big)\sum_{i}|G_{ij}(z, w')|^2  \notag \\
  & \le 2\Big (1+  \frac {  (\im w')^2} { (\im w)^2} \Big)  \sum_{i}|G_{ij}(z, w')|^2 \le  3N^{2\e_0}\tnorm{G (z, w')}^2, 
\end{align*}
where we have used \eqref{mnn} to bound  $\left\|G(z, w)\right\|^2_{L^2\to L^2}$.
We apply this inequality with $w' = \wt z_n$ and $w$ satisfying \eqref{327}.
Using \eqref{jxw} and the definition \eqref{tri}, we can bound $ \tnorm{G (z, \wt z_n)}^2$ as  
\be\label{bdll}
\sup_{\substack{ \re w = \re  \wt z_n,  \\ \im  \wt z_{n+1} \le  \im w \le \im  \wt z_n }}  \|T(z,  w)\|_{\max}  \le  
\sup_{\substack{ \re w = \re  \wt z_n,  \\ \im  \wt z_{n+1} \le  \im w \le \im  \wt z_n }}   \frac{C}W \tnorm{G (z, w) }^2 =\OO_\tau\left(\frac{N^{1+2\e_0}}{W^2\im z}+ \frac{N^{2+2\e_0}}{W^3}\right)
\ee
with high probability for any fixed $\tau>0$.

We now consider interpolation between $\wt z_n$ and $\wt z_{n+1}$: 
 $$
 \wt z_{n,m}=\wt z_n- \ii  (\im\wt z_n-\im \wt z_{n+1})mN^{-50}, \quad   m\in \llbracket 0, N^{50}\rrbracket. 
 $$
We would like to use Lemma \ref{bneng} and induction to prove that \eqref{jxw23} holds for $ G(z, \wt z_{n,m})-M(z, \wt z_{n,m})$ for all $m$. First, we know \eqref{jxw23} holds for $G(z, \wt z_n)$. Then suppose \eqref{jxw23} holds for  
$G(z, \wt z_{n,j})$ for all $j \le m-1$.  We now  verify that \eqref{cllo} holds for 
$G(z, \wt z_{n,m})$ with $\Phi^2=N^\tau \Phi^2_0$ for any fixed $\tau>0$, where
$$
\Phi^2_0:= \frac{N^{1+2\e_0}}{W^2\im z}+ \frac{N^{2+2\e_0}}{W^3}.
$$  
To this end, we note that \eqref{bdll} already verifies the bound on $\| T(z, \wt z_{n,m}) \|_{\max}$
in \eqref{cllo} for all $ m\in \llbracket 0, N^{50}\rrbracket$.
By using $\|\partial_{\wt z\, }G \|_{\max}\le N \|G \|^2_{\max}$ (which follows from \eqref{coz}), \eqref{dozy}, $|\wt z_{n,m-1} - \wt z_{n,m}| \le N^{-50}$, and \eqref{bdll} (to bound $\|G \|^2_{\max}$ by $\tnorm{G}^2$), we note that for sufficiently small constant $\delta>0$,
$$
  \|G(z,\wt z_{n,m-1})-M(z,\wt z_{n,m-1})\|_{\max}\le N^{-2\delta}\implies   \|G(z,\wt z_{n,{m}})-M(z,\wt z_{n,m})\|_{\max}\le N^{-\delta}.
$$
This proves  the first bound in \eqref{cllo}  for $G(z,\wt z_{n,{m}})$. 
Then Lemma  \ref{bneng} asserts that \eqref{34} holds for $G(z, \wt z_{n,m})$ with $N^\tau\Phi_0$ for any fixed $\tau>0$.  This proves \eqref{jxw23} (i.e. the $m=N^{50}$ case) by induction.

\medskip \noindent
  {\bf Step 3}:  Suppose that for some $n\in \N$ with $\im \wt z_n\ge N^{-10}$, \eqref{jxw} holds for  $G(z, \wt z_n)$  and $M(z, \wt z_n)$ for any large $D>0$. 
We  have proved that \eqref{jxw23} and \eqref{bdll} hold for $G(z, \wt z_{n+1})$. 
We now apply Lemma \ref{YEniu1} to  prove the weak form of Theorem \ref{LLniu}. First, the condition \eqref{GM11} holds with $\Phi=\frac{N^{1/2+\epsilon_0}}{W\sqrt{\im z}}+ \frac{N^{1+\epsilon_0}}{W^{3/2}} $. In order for the condition \eqref{46} to hold,
we need
\be\label{344}
\frac{N^{1/2+\epsilon_0}}{W\sqrt{\im z}}+ \frac{N^{1+\epsilon_0}}{W^{3/2}}  \le  \min\left\{ \frac{W}{N^{1+\e_* + \e^*}} ,\frac{W^2}{N^{2+ \e^*}} \right\}  ,
\ee
which is satisfied if  
$$
 W \ge 2\max\left(N^{\frac67 + \frac27 \e_0+\frac27\e^* } ,N^{\frac34 + \frac34\e_*+ \frac12 \e_0+\frac12\e^* } \right).
$$
If we take $\epsilon_0< \epsilon^*$, \eqref{GM11} implies \eqref{44} under the condition \eqref{67}. We then apply Lemma \ref{YEniu1} again, and after at most $3/\e^*$ iterations we obtain that 
\be\label{mainPhi}
 \|G(z,\wt z_{n+1})-M(z,\wt z_{n+1})\|_{\max} \le N^{ \tau}\left(\frac{1}{\sqrt{W\im z}}+ \frac{N^{1/2}}{W}\right). 
\ee
By induction on $n$ (with the number of inductions $\le 10/\e_0$),  the main estimate \eqref{mainPhi} for $G(z, \wt z_{n})$ holds for all $n$ as long as   $\im \wt z_{n} \ge N^{-10}$. 

\medskip

Similarly,  we can apply Lemma \ref{YEniu} to  prove the strong  form of Theorem \ref{LLniu}. 
As in the previous argument, \eqref{jxw23} and \eqref{bdll} hold for $G(z, \wt z_{n+1})$ assuming  \eqref{jxw} for $G(z, \wt z_n)$ and $\im \wt z_n\ge N^{-10}$. Therefore,  we can choose $\Phi$ and 
$\wt \Phi$ as  
$$
 \Phi= \frac{N^{1/2+\epsilon_0}}{W\sqrt{\im z}}+ \frac{N^{1+\epsilon_0}}{W^{3/2}} ,
 \quad 
 \wt \Phi=\frac{N^{\e_0}}{\sqrt{W\im z}}+ \frac{N^{1/2+\e_0}}{W},
 $$
where the choice of $\wt \Phi$ follows from using \eqref{bdll}. It is easy to see that \eqref{GM1.5} holds.
In order to apply Lemma \ref{YEniu}, we need \eqref{lakezz}, i.e., 
\be\nonumber
\left( \frac{N^{\e_0}}{\sqrt{W\im z}}+ \frac{N^{1/2+\e_0}}{W}\right)^2 \le  \min\left\{ \frac{W}{N^{1+\e_* + \e^*}} ,\frac{W^2}{N^{2+ \e^*}} \right\} ,
\ee  
which is satisfied if
$$ W \ge 2\max\left(N^{\frac34 + \frac12 \e_0+\frac14\e^* } ,N^{\frac12 + \e_*+ \e_0+\frac12\e^* } \right).$$
Clearly, the assumption \eqref{jxw0} guarantees this condition if we choose $\e_0<\epsilon^*/2$. Again, we can apply Lemma \ref{YEniu} iteratively until we get \eqref{mainPhi} for $G(z, \wt z_{n+1})$. The rest of the proof for the strong form of Theorem \ref{LLniu} is the same as 
the proof for the weak form.

 \medskip \noindent {\bf Step 4}:  We now prove \eqref{jxw} for $G(z,\wt z)$ with $\im \wt z = 0$ by using continuity from the estimate for $G(z,\wt z)$ with $\im \wt z = N^{-10}$ established in {\bf Step 3}. It is easy to see that 
\be\label{continuG}
\partial_{\wt z\, } \|G(z,\wt z\, )\|_{\max} \le   \|\partial_{\wt z\, }G(z,\wt z\, )\|_{\max}\le N \|G(z,\wt z\, )\|^2_{\max}.
\ee
With \eqref{continuG} and using \eqref{mainPhi} for $G(z, \re\wt z+\ii N^{-10})$, we can obtain that 
$$\sup_{0 \le \eta \le N^{-10}}\|G(z,\re \wt z+\ii \eta)\|_{\max}={\rm O}(1), \quad w.h.p.$$
Then using \eqref{dozy}, \eqref{coz} and \eqref{mainPhi} for $G(z, \re\wt z + \ii N^{-10})$, we obtain that \eqref{jxw} holds for $G(z, \re\wt z)$.

\begin{remark} 
If we use the bound in Remark \ref{33} and the condition \eqref{462} instead of \eqref{46},  then the restriction \eqref{344}  
becomes 
$$\left(\frac{N^{1/2+\epsilon_0}}{W\sqrt{\im z}}+ \frac{N^{1+\epsilon_0}}{W^{3/2}}  \right)^2 \le  \min\left\{ \frac{W}{N^{1+\e_* + \e^*}} ,\frac{W^2}{N^{2+ \e^*}} \right\} $$ 
which gives restriction in \eqref{inbetween}. 
So we get a result in between the weak and strong forms of   Theorem \ref{LLniu}.
\end{remark}

 \section{Properties of $M$} \label{matrix-norm} 
 
The main goal of this section is to derive some deterministic estimates related to $(M_\zeta^{\b g})_i$, $i\in \Z_N$. In particular, we will finish the proof of Lemma \ref{UE} and Lemma \ref{maxnorm}.

\subsection{The stability.}\
The system of self-consistent equations \eqref{falvww} is a perturbation of the standard self-consistent equation 
$$m_{\rm sc}^{-1}=-\wt z - m_{\rm sc}$$
for $m_{\rm sc}(\wt z)$. Thus our basic strategy is to use the standard perturbation theory (see \eqref{mGYY} below) combined with a stability estimate for the self-consistent equation (i.e. the operator bound \eqref{dust}). We first recall the following elementary properties of $m_{\rm sc}$, which can be proved directly using \eqref{msc}.

\begin{lemma} \label{lem: msc}
 We have for all $z=E+\ii\eta$ with $\eta > 0$ that
$$
   |m_{\rm sc}(z)| = |m_{\rm sc}(z)+z|^{-1}\le 1.
$$
Furthermore, there is a constant $c > 0$ such that for $E \in [-10, 10]$ and $\eta \in (0, 10]$ we have
\begin{equation} \label{m is bounded}
c \;\leq\; \abs{m_{\rm sc}(z)} \;\leq\; 1 - c\eta\,,
\end{equation}
\begin{equation} \label{m'}
\left|\partial_z m_{\rm sc}(z)\right| \;\leq\; c^{-1}(\kappa+\eta)^{-1/2},
\end{equation} 
$$
\abs{1-m^2_{\rm sc}(z)} \;\asymp\; \sqrt{\kappa + \eta} \,,
$$
as well as
$$
\im m_{\rm sc}(z) \;\asymp\;
\begin{cases}
\sqrt{\kappa + \eta} & \text{if $\abs{E} \leq 2$}
\\
\frac{\eta}{\sqrt{\kappa + \eta}} & \text{if $\abs{E} \geq 2$}
\end{cases},
$$
where $\kappa:= \big| |E|-2\big|$ denotes the distance of $E$ to the
spectral edges. 
\end{lemma}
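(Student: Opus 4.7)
The proof is a direct computation starting from the explicit formula $m_{\rm sc}(z)=\bigl(-z+\sqrt{z^2-4}\bigr)/2$, equivalently from the defining quadratic $m_{\rm sc}^2+zm_{\rm sc}+1=0$, i.e.\ $m_{\rm sc}(m_{\rm sc}+z)=-1$. My plan is to derive all five assertions from this one identity together with one imaginary-part bookkeeping calculation.

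First, from $m_{\rm sc}(m_{\rm sc}+z)=-1$ I read off $|m_{\rm sc}|\cdot|m_{\rm sc}+z|=1$, which gives the equality $|m_{\rm sc}|=|m_{\rm sc}+z|^{-1}$. Writing $m_{\rm sc}=-1/(m_{\rm sc}+z)$ and taking imaginary parts yields
\[
\im m_{\rm sc}\bigl(|m_{\rm sc}+z|^2-1\bigr)\;=\;\eta.
\]
Since $m_{\rm sc}$ maps $\C^+\to\C^+$ we have $\im m_{\rm sc}>0$, hence $|m_{\rm sc}+z|^2\geq 1$, which proves $|m_{\rm sc}|\leq 1$. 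The same identity rewrites as $1-|m_{\rm sc}|^2=\eta|m_{\rm sc}|^2/\im m_{\rm sc}$, and since $\im m_{\rm sc}\leq|m_{\rm sc}|$ this gives $1-|m_{\rm sc}|\geq c\eta$, once a lower bound $|m_{\rm sc}|\geq c$ is established. The lower bound follows from compactness: on the closed set $\{|E|\leq 10,\ \eta\in[0,10]\}$, $m_{\rm sc}$ is continuous and never vanishes (the formula at $\eta=0$ shows $|m_{\rm sc}(E)|=1$ for $|E|\leq 2$ and $|m_{\rm sc}(E)|>0$ for $2<|E|\leq 10$, via $m_{\rm sc}(E)=\bigl(-E+\sgn(E)\sqrt{E^2-4}\bigr)/2$).

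For the derivative bound, differentiating the explicit formula gives $\partial_zm_{\rm sc}=\bigl(-1+z/\sqrt{z^2-4}\bigr)/2$, so $|\partial_zm_{\rm sc}|\lesssim |z^2-4|^{-1/2}$. Writing $z^2-4=(z-2)(z+2)$ and using $|z\pm 2|=\sqrt{(|E|\pm 2)^2+\eta^2}$, one checks directly that $|z^2-4|\asymp \kappa+\eta$ uniformly on $|E|\leq 10$, $\eta\leq 10$, giving $|\partial_z m_{\rm sc}|\lesssim (\kappa+\eta)^{-1/2}$. Next, the quadratic relation $m_{\rm sc}^2+zm_{\rm sc}+1=0$ and $2m_{\rm sc}+z=\sqrt{z^2-4}$ combine to give the clean identity
\[
1-m_{\rm sc}^2\;=\;-m_{\rm sc}\sqrt{z^2-4},
\]
so $|1-m_{\rm sc}^2|=|m_{\rm sc}|\cdot|z^2-4|^{1/2}\asymp\sqrt{\kappa+\eta}$ using $|m_{\rm sc}|\asymp 1$.

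Finally, for $\im m_{\rm sc}$: compute $\im m_{\rm sc}=\tfrac12(\im\sqrt{z^2-4}-\eta)$, and split on the sign of $\re(z^2-4)=E^2-\eta^2-4$. In the bulk regime $|E|\leq 2$, $\kappa=2-|E|$ and $\re(z^2-4)\leq 0$ (for $\eta$ small), so $\sqrt{z^2-4}$ is dominated by its imaginary part of order $\sqrt{4-E^2+O(\eta)}\asymp\sqrt{\kappa+\eta}$, and $\im m_{\rm sc}\asymp\sqrt{\kappa+\eta}$. Outside the spectrum $|E|>2$, with $\kappa=|E|-2$, $\re(z^2-4)\asymp\kappa$ dominates; elementary manipulation of $\sqrt{a+ib}$ shows $\im\sqrt{z^2-4}\asymp |E|\eta/\sqrt{\kappa+\eta}\asymp \eta/\sqrt{\kappa+\eta}$, and subtracting $\eta$ preserves the order because $\eta/\sqrt{\kappa+\eta}\gg \eta$ when $\kappa+\eta$ is bounded. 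There is no serious obstacle here; the only care needed is the choice of branch of $\sqrt{z^2-4}$ (fixed so that $m_{\rm sc}$ maps $\C^+\to\C^+$) and the two-case split around the edges $|E|=2$ when tracking the $\asymp$-constants.
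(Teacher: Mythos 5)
Your proof is correct and takes exactly the approach the paper indicates: the paper gives no proof of this lemma, stating only that the properties ``can be proved directly using \eqref{msc},'' i.e.\ the explicit formula, and your derivations from $m_{\rm sc}(m_{\rm sc}+z)=-1$ and $2m_{\rm sc}+z=\sqrt{z^2-4}$ are precisely such direct computations (the identity $1-m_{\rm sc}^2=-m_{\rm sc}\sqrt{z^2-4}$ and the imaginary-part bookkeeping $\im m_{\rm sc}(|m_{\rm sc}+z|^2-1)=\eta$ are both verified). One small caveat: in your final paragraph, the phrase ``subtracting $\eta$ preserves the order because $\eta/\sqrt{\kappa+\eta}\gg\eta$ when $\kappa+\eta$ is bounded'' is only literally true when $\kappa+\eta\to 0$; for $\kappa\asymp 1$, $\eta\to 0$ one should instead note that $\im\sqrt{z^2-4}-\eta=\eta\bigl(E/\sqrt{E^2-4}-1\bigr)$ with $E/\sqrt{E^2-4}-1=\frac{4}{\sqrt{E^2-4}(E+\sqrt{E^2-4})}$ bounded below on the compact region, so the difference stays $\asymp\eta\asymp\eta/\sqrt{\kappa+\eta}$ --- the conclusion is unaffected but the justification as written is incomplete in that regime.
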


The following lemma will be used in the proof of Lemma \ref{UE} and Lemma \ref{maxnorm}. Recall that $S_0$ is the matrix with entries $s_{ij}$, which is defined in Definition \ref{defHg}.

\begin{lemma} \label{lem_m0}
Assume $|\re \wt z|\le 2-\kappa$ for some constant $\kappa>0$ and denote $m=m_{\rm sc}(\wt z+\ii 0^+ )$. Then for any fixed $\tau> 0$, there exist constants $ c_1, C_1>0$ such that
\be\label{gbzz2}
\left \|\left (\frac{m^2S_0+\tau}{1+\tau}\right)^2\right \|_{L^\infty\to L^\infty}<  1-c_1.
\ee
Furthermore, 
\be\label{dust}
\left\|(1-m^2S_0)^{-1}\right\|_{L^\infty\to L^\infty}\le C_1.
\ee
\end{lemma}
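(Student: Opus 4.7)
The plan is to derive both bounds from a row-sum estimate on the matrix $A := (m^2 S_0 + \tau I)/(1+\tau)$ and its square. The spectral input about $m = m_{\rm sc}(\wt z + \ii 0^+)$ that I will use is $|m| = 1$ together with $\re(m^2) \le 1 - c_\kappa$ for some $c_\kappa > 0$ depending only on $\kappa$. Both are immediate from \eqref{msc}: writing $m = \mathrm{e}^{\ii\theta}$ with $\cos\theta = -\re \wt z /2 \in (-1+\kappa/2,\,1-\kappa/2)$ gives $|m| = 1$, and then $1 - \re(m^2) = 2\sin^2\theta \ge c_\kappa$.

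For \eqref{gbzz2}, I would expand $A^2 = (m^4 S_0^2 + 2\tau m^2 S_0 + \tau^2 I)/(1+\tau)^2$ and control the row sum $\sum_j |(A^2)_{ij}|$. The key observation is that for $j \ne i$ the $\tau^2 \delta_{ij}$ term drops, so one can factor out $m^2$ and apply the elementary identity $|m^2 x + y|^2 = (x+y)^2 - 2xy(1 - \re m^2)$ (for nonnegative $x,y$) to obtain
\[
|(A^2)_{ij}| \le \frac{(S_0^2)_{ij} + 2\tau (S_0)_{ij}}{(1+\tau)^2} - \frac{2\tau(1 - \re m^2)}{(1+\tau)^2}\cdot\frac{(S_0^2)_{ij}(S_0)_{ij}}{(S_0^2)_{ij} + 2\tau (S_0)_{ij}}.
\]
For the diagonal term $|(A^2)_{ii}|$ I use only the crude triangle bound $|m^4 (S_0^2)_{ii} + 2\tau m^2 (S_0)_{ii} + \tau^2| \le (S_0^2)_{ii} + 2\tau (S_0)_{ii} + \tau^2$, discarding any further phase cancellation. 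Summing all contributions and using that $S_0$ and $S_0^2$ have unit row sums, the unperturbed pieces telescope to $(1+\tau)^2/(1+\tau)^2 = 1$, leaving
\[
\sum_j |(A^2)_{ij}| \;\le\; 1 - \frac{2\tau(1 - \re m^2)}{(1+\tau)^2}\sum_{j \ne i}\frac{(S_0^2)_{ij}(S_0)_{ij}}{(S_0^2)_{ij} + 2\tau (S_0)_{ij}}.
\]

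The main technical step, and the only place where the band structure enters, is to show that this last sum is bounded below by a constant independent of $N$ and $W$. The input is \eqref{bandcw1}: for $|i - j| \le W$ one has $(S_0)_{ij} = f(i-j) \ge c_s/W$, while $(S_0^2)_{ij} = (f*f)(i-j) \ge c_s^2/W$ because the convolution sum contains at least $W$ indices $k$ with both factors $\ge c_s/W$; the matching upper bounds $(S_0)_{ij}, (S_0^2)_{ij} = O(1/W)$ are equally elementary. Hence each summand with $|i-j| \le W$ is bounded below by a positive multiple of $1/W$, and summing over the $\asymp W$ such indices gives a lower bound of order $1$ depending only on $\tau, c_s, C_s$. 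Combined with $1 - \re m^2 \ge c_\kappa$, this yields \eqref{gbzz2} with $c_1 = c_1(\tau,\kappa,c_s,C_s) > 0$. I expect this convolution bookkeeping, performed cyclically on $\Z_N$, to be the most delicate part of the argument, though conceptually routine.

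For \eqref{dust}, I rewrite $I - m^2 S_0 = (1+\tau)(I - A)$ and use $(I-A)^{-1} = (I+A)(I-A^2)^{-1}$, which reduces matters to estimating $\|I+A\|_{L^\infty \to L^\infty}$ and $\|(I-A^2)^{-1}\|_{L^\infty \to L^\infty}$ separately. The first is at most $2$ because $\|A\|_{L^\infty \to L^\infty} \le 1$ (using $|m^2| = 1$ together with the row-stochasticity of $S_0$), and the second is at most $c_1^{-1}$ by a Neumann series thanks to \eqref{gbzz2}, giving $C_1 = 2 c_1^{-1}$.
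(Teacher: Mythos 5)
Your proposal is correct, but your proof of \eqref{gbzz2} takes a genuinely different route from the paper's. The paper argues by contradiction: if some row of $\bigl((m^2S_0+\tau)/(1+\tau)\bigr)^2$ applied to a sup-normalized vector $\bv$ nearly attains modulus $1$, then near-equality in the triangle inequality forces the phases of $m^4(S_0^2\bv)_i$, $m^2(S_0\bv)_i$ and $v_i$ to align, so $a:=(S_0\bv)_i\approx m^{-2}$ and $b:=(S_0^2\bv)_i\approx m^{-4}$, which differ by order one since $\im m\asymp 1$; the core bound \eqref{bandcw1} then forces the rows of $S_0$ and $S_0^2$ to share an index $j$ with $\re(v_j\bar a)$ and $\re(v_j\bar b)$ both near $1$, a contradiction. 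You instead quantify the same phase cancellation directly at the level of row sums, via $|m^2x+y|^2=(x+y)^2-2xy(1-\re m^2)$ together with the band estimates $(S_0)_{ij}\asymp W^{-1}$ and $(S_0^2)_{ij}\asymp W^{-1}$ for $|i-j|\le W$ (your convolution count for the lower bound on $(S_0^2)_{ij}$ is right), which yields an explicit constant $c_1=c_1(\tau,\kappa,c_s,C_s)$ and avoids the uniformity bookkeeping hidden in the paper's ``$\e\to 0^+$'' argument; both proofs ultimately rest on the same two inputs, namely the phase of $m^2$ being bounded away from $0$ and the $W^{-1}$ core of the band. One caveat: you assert $|m|=1$, but the lemma is also invoked with $\im\wt z>0$, where $|m|<1$; this is harmless, since for $|m|\le 1$ your identity becomes the inequality $|m^2x+y|^2\le(x+y)^2-2xy(1-\re m^2)$, and $1-\re(m^2)=1-|m|^2+2(\im m)^2\ge c_\kappa$ still holds by Lemma \ref{lem: msc}, so your computation goes through verbatim — this is the same reduction to the hardest case $|m|=1$ that the paper makes informally. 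Your derivation of \eqref{dust} from \eqref{gbzz2} via $(I-A)^{-1}=(I+A)(I-A^2)^{-1}$ and $\|A\|_{L^\infty\to L^\infty}\le 1$ is essentially identical to the paper's even/odd grouping of the Neumann series.
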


\begin{proof}
For some small constant $\tau>0$ we write 
\be\label{gbzz}
(1-m^2S_0)^{-1}= \frac{1}{1+\tau} \sum^\infty_{ k=0 } \left(\frac{m^2S_0+\tau}{1+\tau}\right)^k.
\ee
Assuming  \eqref{gbzz2}, we get that  
$$
\|(1-m^2S_0)^{-1}\|_{L^\infty\to L^\infty}
\le \frac 1 {1+\tau}     \left (  1+ \left\|\frac{m^2S_0+\tau}{1+\tau}\right\|_{L^\infty\to L^\infty}\right)  \sum_{j=0}^\infty   \left\| \frac{m^2S_0+\tau}{1+\tau}\right\|_{L^\infty\to L^\infty}^{2j} 
\le C_1, 
$$
which proves \eqref{dust}. 

We now prove  \eqref{gbzz2}. Suppose that  there is a vector  $\bv\in \C^N$ so that $\|\bv\|_\infty=1$ and
 $$\left |\frac{[(m^2S_0+\tau)^2 \bv ]_i}{(1+\tau)^2} \right | = 1 - \e$$ 
 for some $i\in \Z_N$ and $\epsilon\equiv \e_N \to 0^+$. Hence 
\be\label{333} 
 (1+ 2 \tau + \tau^2) (1- \epsilon)  = \big | m^4 b + 2 \tau m^2 a  + \tau^2 v_i \big | \le 
 |  b| + 2 \tau |a| + \tau^2 | v_i | \le 1+ 2 \tau + \tau^2, 
\ee
where $a:=(S_0 \bv)_i,$ $b:=(S_0^2\bv)_i$ and we have used the bounds $|m|\le 1$, $ |a| \le 1$ and  $ |b| \le 1$ (since $\|S_0\|_{L^\infty \to L^\infty}=1$). It will be clear that the $|m|=1$ case is most difficult and we will 
assume this condition in the following proof. Moreover, we assume with loss of generality that $v_i > 0$ (by changing the global phase of $\mathbf v$). Now $m$, $a$ and $b$ are complex numbers, and the inequality \eqref{333} implies that $m^4 b$, $m^2 a$ and $ v_i $ have almost the same phases. Since $|v_i| \le 1$, $ |b| \le 1$ and  $ |a| \le 1$, \eqref{333} implies that for some constant $C>0$ independent of $\e$, 
\be \label{ab}
 v_i\ge 1- C\e, \quad |b- m^{-4}| \le C \e , \quad | a - 
 m^{-2} | \le C \e .
\ee
  \nc 
Since $m$ is a unit modulus complex number with imaginary part of order 1, we have that $\delta:=|m^{-2}-m^{-4}| $ is a number of order 1 and 
$$ |a-b|>\delta/2.$$ 
Fix the index $i$ and denote $c_j := (S_0)_{ij}$, $d_j: = (S_0^2)_{ij}$. Then $\sum_j c_j = 1 = \sum_j d_j$. Hence \eqref{ab} implies  
$$
1  - {\rm O}(\e) = \re (a \bar a )= \sum_j c_j   \re (v_j   \bar a), \quad  1  - {\rm O}(\e) = \re (b \bar b )= \sum_j d_j \re (v_j \bar b) , 
$$
 where ${\rm O}(\e) $ denote a positive number bounded by $C \e$ for some constant $C> 0$ independent of $\e$. For any $0<r <1$, denote by $A_r: = \{ j:  \re (v_j   \bar a) \ge 1 - r \}$ and let $\alpha_r: = \sum_{j\in A_r}  c_j$. 
Then we have 
\be\nonumber
  \alpha_r  \ge \sum_{j\in A_r}  c_j   \re (v_j   \bar a)  =  1 - {\rm O}(\e)- \sum_{j\not \in A_r}  c_j   \re (v_j   \bar a) \ge 1 - {\rm O}(\e) -   (1-\alpha_r)  (1-r) = \alpha_r + r - \alpha_r r- {\rm O}(\e),
\ee
which implies that 
\be\label{zjmmus}
\sum_{j\in A_r}  c_j = \alpha_r  \ge   1 - {\rm O}(\e)r^{-1}, \quad \sum_{j\notin A_r}  c_j =   {\rm O}(\e)r^{-1}.
\ee
Similarly, if we define $B_r := \{ j:  \re (v_j   \bar b) \ge 1 - r \}$, then 
 \be\label{zjmmus2} 
\sum_{j\notin B_r}  d_j= {\rm O}(\e)r^{-1}.
\ee   
We claim that if $r \ge C \e$ for some large enough constant $C>0$, then $A_r \cap B_r \not= \emptyset$. To see this, we define $U: = \{j: |i-j| \le  W\}$.  By  \eqref{bandcw1} and the definition of $c_j$, we have $c_j \ge 
c_s W^{-1}$ for $j\in U$.  Clearly, we also have $d_j \ge \frac12 c_s W^{-1}$ for $j \in U$.  Then with \eqref{zjmmus} and \eqref{zjmmus2}, we have
$$
\#\{j\in U\setminus A_r\} =  {\rm O}(\e)r^{-1}c_s^{-1}W,\quad \#\{j\in U\setminus B_r\} =  {\rm O}(\e)r^{-1}c_s^{-1}W.
$$
If we choose  $r= C \e$ for some large enough constant $C>0$, then the above two inequalities imply 
$A_r \cap B_r \not= \emptyset$, since $|U|=W$.  \nc  
Thus there is an index $j$ such that 
\be\label{abj}
\re (v_j   \bar a) \ge 1 - r , \quad \re (v_j   \bar b) \ge 1 - r.
\ee 
Since $|a|\le 1$, $|b|\le 1$, $|v_j|\le 1$ and $|a-b|>\delta/2$, \eqref{abj} is possible only if $r \gtrsim \delta$, which contradicts the fact that $r\to 0$ when $\e\to 0$. This proves \eqref{gbzz2}.  
\end{proof}

\subsection{Proof of Lemma \ref{UE}.}\ 
With Lemma \ref{lem_m0}, we can now give the proof of Lemma \ref{UE}.
\begin{proof}[Proof of Lemma \ref{UE}] 
We first prove the existence and continuity of the solutions to \eqref{falvww}. The proof is a standard application of the contraction principle. Denote by $\b z: = (z_1, \ldots, z_N)$, $\b x: = (x_1, \ldots, x_N)$ and $\mathbf M:=((M_\zeta^{\b g})_1, \ldots, (M_\zeta^{\b g})_N)$ with 
$$
z_i=z{\bf 1}_{ i\in \llbracket 1, W\rrbracket}+\wt z\, {\bf 1}_{ i\notin \llbracket 1, W\rrbracket},
$$ 
and
\be\label{Def x} 
x_i\equiv (x_\zeta^{\b g})_i(z, \wt z):=(M_\zeta^{\b g})_i(z, \wt z)-m , \quad \mathbf M= \bx +m{\bf e}_1, \quad m:=m_{\rm sc}(\wt z+ {\rm i} 0^+\, ), \quad  {\bf e}_1=(1,1,\cdots, 1). 
\ee
Using the above notations and recalling Definition \ref{defHg}, we can rewrite \eqref{falvww} into the following form 
\be\label{111}
 (m+x_i)^{-1}= M_i^{-1}=-z_i-g_i-(S_0 M)_i
+\zeta({ \Sigma} M)_i = -z_i-g_i-(S_0 \bx )_i
 - m (S_0 {\bf e}_1)_i +\zeta({ \Sigma} \bx )_i+ \zeta m ({\Sigma} {\bf e}_1)_i.
\ee
Subtracting   $m^{-1}=-\wt z-m$ from the last equation and using $S_0 {\bf e}_1={\bf e}_1$, we get that
$$
m^{-1}-(m+x_i)^{-1}=g_i+(z_i-\wt z)+(S_0{\bf x})-\zeta  m  ({\Sigma}  {\bf e}_1 )_i -\zeta  ({\Sigma} \bx)_i.
$$
Then \eqref{111} is equivalent to 
\be\label{mGYY}
  [(1-m^2S_0) \bx]_i= m^2(g_i +(z_i-\wt z\, ))+m^2\left(\frac1{m+x_i}-\frac1m+\frac{x_i}{m^2}\right)-\zeta  m^3 ({\Sigma}  {\bf e}_1 )_i -\zeta m^2({ \Sigma} \bx)_i.
\ee
Define iteratively a sequence of vectors  $\bx^k\in \C^N$ such that $\bx^0=\b0\in \C^N$ and 
\be\label{niacin}
 \left[(1-m^2S_0) \bx^{k+1}\right]_i :=  m^2( g_i +(z_i-\wt z\, )) +m^2\left(\frac1{m+ (\bx^k)_i}-\frac1m+\frac{(\bx^{k})_i}{m^2}\right)-\zeta  m^3 ({ \Sigma}  {\bf e}_1)_i -\zeta  m^2({ \Sigma} \bx^{k})_i.
\ee
 In other words, \eqref{niacin} defines a mapping $h:l^\infty (\Z_N)\to l^\infty (\Z_N)$:
\be\label{iteration} 
\bx^{k+1}= h(\bx^k), \quad h_i(\bx):= \sum_j(1-m^2S_0)^{-1}_{ij} \left[ m^2( g_j +(z_j-\wt z\, )) + q(x_j) -\zeta  m^3 ({ \Sigma}  {\bf e}_1)_j -\zeta  m^2({ \Sigma} \bx^{k})_j\right],
\ee
where 
$$
q(x) := m^2\left(\frac1{m+ x } +\frac{x}{m^2} - \frac 1 m\right)  =\frac{x^2}{m+x}.
$$
Note by the assumptions of Lemma \ref{UE}, $c_\kappa \le m \le 1$ for some constant $c_\kappa>0$ depending only on $\kappa$. Then with \eqref{dust}, it is easy to see that there exists a sufficiently small constant $0< \alpha <c_\kappa/2$, such that $h$ is a self-mapping 
$$h: B_r \left(l^\infty(\Z_N)\right)\to B_r \left(l^\infty(\Z_N)\right), \quad B_r\left(l^\infty(\Z_N)\right):=\{\bx\in l^\infty(\Z_N): \|\bx\|_\infty \le r \},$$
as long as $r\le \alpha$ and 
\be\label{priori_cond}
\zeta+\| \b g\|_\infty+ |z-\wt z|\le c_r
\ee
for some constant $ c_r>0$ depending on $r$. Now it suffices to prove that $h$ restricted to $B_r \left(l^\infty(\Z_N)\right)$ is a contraction, which then implies that $\bx:=\lim_{k\to\infty} \bx^k$ exists and is a unique solution to \eqref{mGYY} subject to the condition $\|\b x\|_\infty \le r$.

From the iteration relation \eqref{iteration}, we obtain that
\be\label{k1k}
\bx^{k+1} - \bx^{k}= \frac {1} {1-m^2S_0} \left [ q (\bx^k) - q (\bx^{ k-1} ) \right]- \frac { \zeta  m^2} {1-m^2S_0} {\Sigma} ( \bx^{k}- \bx^{k-1}),
\ee
where $q(\bx)$ denotes a vector with components $q(x_i)$. 
Using $|q'(0)| = 0$ and \eqref{dust}, we get from \eqref{k1k} that
$$
\|\bx^{k+1}-\bx^k\|_\infty\le  C_\kappa \left(\zeta+\|\bx^{k }\|_\infty +\|\bx^{k-1 }\|_\infty  \right)\cdot \|\bx^{k }-\bx^{k-1}\|_\infty
$$
for some constant $C_\kappa>0$ depending only on $\kappa$. Thus we can first choose a sufficiently small constant $0<r<\al$ and then the constant $c_r>0$ such that $C_\kappa \left(c_r + 2r  \right)<1$, and $h$ is a self-mapping on $ B_r \left(l^\infty(\Z_N)\right)$ under the condition \eqref{priori_cond}. In other words, $h$ is indeed a contraction, which proves the existence and uniqueness of the solution.

Note that with \eqref{dust} and $\bx^0=\b0$, we get from \eqref{iteration} that
 $$ \|\bx^1\|_\infty=\OO \left(|z-\wt z|+\zeta+\|\b g\|_\infty\right).$$
With the contraction mapping, we have the bound 
$$\|\bx\|_\infty \le \sum_{k=0}^\infty \|\bx^{k+1}-\bx^k\|_\infty \le \frac{ \|\bx^1\|_\infty }{1-C_\kappa \left(\zeta + 2r  \right)}=\OO \left(|z-\wt z|+\zeta+\|\b g\|_\infty\right).$$
This gives the bound \eqref{bony}.

We now prove \eqref{dozy}. We have proved above that both $(M_\zeta^{\b g})_i(z, \wt z)$ and $(M_{\zeta'}^{\b g'})_i(z', \wt z\,')$ exist and satisfy \eqref{bony}. Denote by $m':=m_{\rm sc}(\wt z\,'+ {\rm i} 0^+\, )$ and $x'_i:=(M_{\zeta'}^{\b g'})_i(z', \wt z\,')-m' $. By \eqref{m'}, we have 
\be\label{m-m'}
|m'-m|={\rm O}(|\wt z - \wt z\,'|).
\ee 
Then using \eqref{mGYY} we can obtain that  
\begin{align*}
 \|  \bx'-\bx \|_\infty \le & C\|(1-m^2S_0)^{-1}\|_{L^\infty\to L^\infty}\cdot  \left\{ |\wt z-\wt z\,'|\cdot \left[ \|\bx'\|_\infty +  \|\b g'\|_\infty+ |z'-\wt z\,'| + \|\bx'\|_\infty^2 + \zeta' (1+\|\bx'\|_\infty)\right] \right.\\
& + \left. \left[  \|\b g-\b g'\|_\infty+ |z-z'| + |\wt z-\wt z\,'|+ |\zeta-\zeta'|(1+\|\bx'\|_\infty)+ \left(\zeta+\|\bx \|_\infty +\| \bx' \|_\infty  \right)\cdot \|\bx' - \bx \|_\infty\right]\right\} \\
\le & C\left(\zeta+\|\bx \|_\infty +\| \bx' \|_\infty  \right)\cdot \|\bx' - \bx \|_\infty + C \left(  \|\b g-\b g'\|_\infty+ |z-z'| + |\wt z-\wt z\,'|+ |\zeta-\zeta'|\right).
\end{align*}
Applying \eqref{bony} to both $(M_\zeta^{\b g})_i(z, \wt z)$ and $(M_{\zeta'}^{\b g'})_i(z', \wt z\,')$,
we see that for small enough $c$, 
$$ \|  \bx'-\bx \|_\infty \le C \left(  \|\b g-\b g'\|_\infty+ |z-z'| + |\wt z-\wt z\,'|+ |\zeta-\zeta'|\right).$$
Together with \eqref{m-m'}, we obtain \eqref{dozy} as desired. 
\end{proof}

\subsection{Proof of Lemma \ref{maxnorm}.}\
To prove Lemma \ref{maxnorm}, it suffices to prove the result for the case $ {\bf g}=0$, and we will describe how to relax to the condition ${\b g}={\rm O}(W^{-3/4})$ by using the Lipschitz continuity estimate \eqref{dozy} at the end of the proof. In preparation for the proof, we first prove the following lemma.

\begin{lemma}\label{lem63} 
Suppose that  $\b g=0$ and the  assumptions  \eqref{mouyzz}, \eqref{mouyzz2} and \eqref{jxw0} hold. 
Then there exist constants $c>0$ and $C>0$ such that 
\be\label{xiao}
 \left||(M_\zeta^{\b 0})_n|^2-|m|^2\right|\le C\left(|z-\wt z\, |+\zeta\right)e^{-c\frac{ |n|}W}, \quad n\in \Z_N, 
 \ee
  and
 \be\label{xiao1}
\frac1W \sum_{n \in \Z_N} (|m|^2|(M_\zeta^{\b 0})_n|^{-2} -1)\ge c( \im z - \im \wt z) - \zeta + \OO \left( N^{-\frac32\e_*} + N^{-\e^* }\im \wt z\right),
 \ee
where $m:=m_{\rm sc}(\wt z+\ii 0^+ )$. 
\end{lemma}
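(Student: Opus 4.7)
The plan is to prove (i) first and then use it as an input for (ii).

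For (i), I would start from the fixed-point equation
\begin{equation*}
  [(1-m^2 S_0)\mathbf{x}]_i = m^2(z_i-\wt z) + q(x_i) - \zeta m^3(\Sigma \b e_1)_i - \zeta m^2(\Sigma \mathbf{x})_i
\end{equation*}
from the proof of Lemma \ref{UE}, with $\mathbf{x}=\mathbf{M}-m\b e_1$ and $q(x)=x^2/(m+x)$. Apart from $q(x_i)$, the right-hand side is supported in $\llbracket 1,W\rrbracket$ and of size $|z-\wt z|+\zeta$; the quadratic $q(x_i)$ is a lower-order correction after one bootstrap thanks to \eqref{bony}. The essential additional input is the exponential off-diagonal decay
\begin{equation*}
  \big|(1-m^2 S_0)^{-1}_{ij}\big| \le \tfrac{C}{W}\,e^{-c|i-j|/W},
\end{equation*}
which I would derive from Lemma \ref{lem_m0} by a Combes--Thomas-type argument: conjugating $m^2 S_0$ by the diagonal weight $D_{kk}=e^{\alpha k}$ with $\alpha\asymp 1/W$ perturbs its $L^\infty$-norm by at most $1+\OO(\alpha W)=\OO(1)$, so the squared-norm bound \eqref{gbzz2} survives with $c_1$ replaced by $c_1/2$, and the Neumann-series argument of Lemma \ref{lem_m0} still applies to the weighted operator. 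Inserting this decay into the fixed-point equation yields $|x_i|\le C(|z-\wt z|+\zeta)e^{-c|i|/W}$, and \eqref{xiao} follows from $||M_i|^2-|m|^2|=\OO(|x_i|)$.

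For (ii), I would use the elementary identity
\begin{equation*}
  |m|^2/|M_i|^2-1 = |1-m/M_i|^2 - 2\re(1-m/M_i)
\end{equation*}
together with the rearrangement $1-m/M_i = m(z_i-\wt z) + m(S_0\mathbf{x})_i - m\zeta(\Sigma M)_i$, obtained from the self-consistent equation and $m^{-1}=-\wt z-m$, $S_0\b e_1=\b e_1$. Dropping the nonnegative $|1-m/M_i|^2$, summing over $i$ with $\sum_i(S_0\mathbf{x})_i=\sum_j x_j$ and $\sum_i(\Sigma M)_i=(W{+}1)W^{-1}\sum_{j\in\llbracket 1,W\rrbracket} M_j$, and substituting the closed-form expression
\begin{equation*}
  (1-m^2)\textstyle\sum_j x_j = m^2 W(z-\wt z) - \zeta m^3(W+1) + \sum_i q(x_i) - \zeta m^2(W{+}1)W^{-1}X_1
\end{equation*}
(from summing the fixed-point equation itself, $X_1=\sum_{j\in\llbracket 1,W\rrbracket} x_j$), the algebraic collapse $m+m^3/(1-m^2)=m/(1-m^2)$ and $m^2+m^4/(1-m^2)=m^2/(1-m^2)$ reduces the leading contribution to $-2\re[m(z-\wt z)/(1-m^2)] + 2\zeta\,\re[m^2/(1-m^2)]$. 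The two identities
\begin{equation*}
  \tfrac{m}{1-m^2} = \tfrac{i}{2\im m},\qquad \re\tfrac{m^2}{1-m^2} = -\tfrac12,
\end{equation*}
valid at $\im\wt z=0$ where $|m|=1$ (both derived via $(1-m^2)\bar m=\bar m - m=-2i\im m$), then give exactly the main term $c(\im z-\im\wt z)-\zeta$ with $c=1/\im m = 2/\sqrt{4-e^2}$.

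The main obstacle is the bookkeeping of errors so that they fit within $\OO(N^{-3\epsilon_*/2}+N^{-\epsilon^*}\im\wt z)$. Three observations are needed: (a) all quadratic contributions in $(z-\wt z,\zeta)$, in particular $\sum_i q(x_i)/W$ controlled by part (i) and the $\zeta X_1/W$ correction, are $\OO((|z-\wt z|+\zeta)^2)=\OO(N^{-3\epsilon_*/2})$, using \eqref{mouyzz2} and $\epsilon^*\le\epsilon_*/20$; (b) the a priori dangerous $\OO(r)=\OO(N^{-\epsilon_*+3\epsilon^*})$ contribution from $\re(z-\wt z)$ vanishes at leading order because $\re[m/(1-m^2)]=0$ on the real axis, so smoothness in $\im\wt z$ downgrades it to $\OO(\im\wt z\cdot r)=\OO(N^{-\epsilon^*}\im\wt z)$; (c) the corrections of the leading coefficients $c$ and $-1$ away from the real axis produce errors $\OO(\im\wt z)(\im z-\im\wt z)$ and $\OO(\im\wt z\cdot\zeta)$, each bounded by $N^{-\epsilon^*}\im\wt z$ since $\im z,\zeta\le N^{-\epsilon^*}$ by \eqref{mouyzz2} and $\epsilon^*\le\epsilon_*/20$. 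It is this cancellation hidden in $\re[m/(1-m^2)]=0$ that converts an otherwise fatal $\re(z-\wt z)$ error into the clean form stated in \eqref{xiao1}.
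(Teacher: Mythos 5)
Your overall strategy matches the paper's, but you take somewhat different technical routes in two places, and there is one concrete gap in the error bookkeeping that must be repaired.

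For part (i), the paper derives the kernel bound $|[(1-m^2S_0)^{-1}]_{ij}-\delta_{ij}|\le CW^{-1}e^{-c|i-j|/W}$ directly from the Neumann series \eqref{gbzz} combined with the \emph{finite range} of $S_0$ (the terms with $k\lesssim |i-j|/W$ vanish, and the remaining tail decays geometrically by \eqref{gbzz2}); your Combes--Thomas conjugation is a legitimate alternative that would also handle variance profiles with exponential tails. Be aware, however, that to recover the crucial $W^{-1}$ prefactor you must conjugate the identity $(1-m^2S_0)^{-1}-I=m^2(1-m^2S_0)^{-1}S_0$ and use $\|D^{-1}S_0D\|_{\max}=\OO(W^{-1})$ together with the $L^\infty$-bound on $D^{-1}(1-m^2S_0)^{-1}D$; the raw Combes--Thomas estimate $|(1-m^2S_0)^{-1}_{ij}|\le Ce^{-c|i-j|/W}$ alone loses a factor $W$ when you propagate it through the source term in \eqref{x1}. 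Likewise, the phrase ``the quadratic $q(x_i)$ is a lower-order correction after one bootstrap'' understates what is needed: an $L^\infty$ bound from \eqref{bony} does not by itself give exponential decay of $x_i$, because $\sum_j K_{ij}q(x_j)$ with $\|q(x)\|_\infty=\OO((|z-\wt z|+\zeta)^2)$ only yields a flat $\OO((|z-\wt z|+\zeta)^2)$ term. You need either the paper's iterative scheme ($\bx^k$, with \eqref{zkkz} picking up one $e^{-c|n|/W}$ factor at each order) or a self-improving bootstrap in the exponentially weighted norm, as you implicitly need for your own closing of the estimate. For part (ii), your identity $|m|^2/|M_i|^2-1=|1-m/M_i|^2-2\re(1-m/M_i)$ (dropping the nonnegative square) is a clean alternative to the paper's expansion $|M_n|^2-|m|^2=2\re(\bar m x_n)+|x_n|^2$; both reduce the computation to summing the self-consistent equation and invoking the real-axis identities $\re\tfrac{m}{1-m^2}=0$, $\im\tfrac{m}{1-m^2}=(4-e^2)^{-1/2}$, $\re\tfrac{m^2}{1-m^2}=-\tfrac12$, which are exactly the paper's. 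Your observations (b) and (c) are correct and match the role of the $\OO(N^{-\e^*}\im\wt z)$ correction in the paper's \eqref{631}.

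The concrete gap is your observation (a). The claim $\OO((|z-\wt z|+\zeta)^2)=\OO(N^{-3\e_*/2})$ is false in general: while $|\re(z-\wt z)|^2\le r^2=N^{-2\e_*+6\e^*}$ and $\zeta^2\le T^2=N^{-2\e_*+2\e^*}$ are indeed $\le N^{-3\e_*/2}$ (using $\e^*\le\e_*/20$), the piece $\im(z-\wt z)^2$ can be as large as $(\im z)^2\le N^{-2\e^*}$, which dwarfs $N^{-3\e_*/2}$. This is precisely the quadratic error that arises from $\sum_i q(x_i)/W$ and from the replacement of $|M_i|^{-2}$ by $|m|^{-2}$. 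The correct handling, which the paper spells out just above \eqref{waccp}, is to split $\im(z-\wt z)^2\le(\im z)\cdot\im(z-\wt z)\le N^{-\e^*}\im(z-\wt z)$ and absorb this into the leading term $c(\im z-\im\wt z)$ (reducing $c$ by a negligible amount), leaving only the stated $\OO(N^{-3\e_*/2}+N^{-\e^*}\im\wt z)$. Without this extra split-and-absorb step the error estimate does not close.
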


\begin{proof}[Proof of Lemma \ref{lem63}]  
 First with \eqref{gbzz} and the fact that $(S_0)_{ij}=0$ if $|i-j|\ge C_s W$,  we get that 
 $$
[ (1-m^2S_0)^{-1}]_{ij}-\delta_{ij} = [m^2 (1-m^2S_0)^{-1}S_0]_{ij}={\rm O}(W^{-1} ) \sum_{k\ge \frac{|i-j|}{C_sW}}   \left\| \frac{m^2S_0+\tau}{1+\tau}\right \|_{L^\infty\to L^\infty}^{ k} 
$$ 
Therefore with \eqref{gbzz2}, we obtain immediately that 
\be\label{M19}
 \left| [(1-m^2S_0)^{-1}]_{ij}-\delta_{ij}\right|\le CW^{-1} e^{-c\frac{|i-j|}{W}}
\ee
for some constants $c,C>0$.  As in the proof of Lemma \ref{UE}, with $\bx^k$ defined in \eqref{niacin}, we know that
\be\label{xn}
 x_n =   M_n-m   = x^1_n+\sum_{k\ge 1}(x^{k+1}_n- x^{k}_n), \quad M_n:=(M_\zeta^{\b 0})_{n}.
\ee
(Recall that we have proved that $x_n  = \lim_{k \to \infty} x^k_n$ in the proof of Lemma \ref{UE} above.) In particular, according to \eqref{niacin}, $\bx^1$ is given by 
\be\label{x1}
 [(1-m^2S_0) \bx^{ 1}]_i=  m^2(z_i-\wt z\, ) -\zeta  m^3 ({ \Sigma}  {\bf e}_1)_i  .
\ee
 Then with \eqref{M19} and \eqref{x1}, one can show that 
  \be\label{xn1}
    |x_n^1|\le C e^{-c\frac{|n|}{W}}\left(|z-\wt z\, |+\zeta\right), \quad n\in \Z_N. 
  \ee 
By \eqref{k1k} and \eqref{M19}, we have 
  $$
    |x_i^{k+1}-x_i^{k }|\le C \sum_j  \left(W^{-1} e^{-c\frac{|i-j|}{W}}+\delta_{ij}\right) \left[\left(|x^{k}_j|+|x^{k-1}_j|\right)|x^{k}_j-x^{k-1}_j| +  \zeta  \mathds{1}_{j\in \llbracket 1,W\rrbracket}\max_{j'\in \llbracket 1,W\rrbracket} |x^{k}_{j'}-x^{k-1}_{j'}| \right].
  $$
By  induction, it is easy to prove that there are constants  $c,C>0$ such that 
\be\label{zkkz}
    |x_n^{k+1}-x_n^{k }|\le C e^{-c\frac{|n|}{W}}\left(|z-\wt z\, |+\zeta\right)^{ k + 1} .
  \ee
Together with \eqref{xn1} and \eqref{xn}, this implies 
  \be\label{jzapp}
  |x_n| =   \left|  M_n -m \right|\le C\left(|z-\wt z\, |+\zeta\right)e^{-c\frac{ |n|}W}, \quad n\in \Z_N.
 \ee
This proves \eqref{xiao} since $\left||M_n|^2-|m|^2\right| \le |M_n^2 -m^2|$.

\medskip   
We now  prove \eqref{xiao1}. 
Using \eqref{xiao}, we have
\begin{equation} \label{insert1}
\begin{split}
& \frac1W\sum_{n \in \Z_N} (|m|^2|M_n|^{-2} -1) =\frac1{W |m|^2}\sum_{n \in \Z_N} \left(|m|^2 - |M_n|^{2} \right) + \OO \left(|z-\wt z|^2+\zeta^2 \right).
\end{split}
\end{equation}
\nc
By definition \eqref{Def x},  
$$
|M_n|^2=|m |^2+2\re (\bar m x_n) +|x_n|^2. 
$$ 
Then with \eqref{jzapp} we get that
\begin{align}\nonumber
 \frac1W \sum_n \left(|M_n|^2-|m|^2\right)&=   \frac{1}W\sum_n  \left[ 2\re (\bar m x_n)  +  |x_n|^2 \right]  =  \frac{2}W\sum_n  \re (\bar m x_n)  + {\rm O}( |z-\wt z|^2+\zeta^2 ). \nonumber
\end{align}
By \eqref{mouyzz} and  \eqref{mouyzz2}, we have
$$ \zeta^2+|\re(z-\wt z)|^2\le T^2 + r^2 \le N^{-3\e_*/2},\quad 0\le \im\wt z\le \im z\le N^{- \e^*},$$
which implies that  
$$
\zeta^2 + |z-\wt z\, |^2  \le  \zeta^2+|\re(z-\wt z)|^2  +  \im (z-\wt z )^2  \le N^{-3\e_*/2} + N^{-\e^*} \im (z-\wt z).
$$
Then using \eqref{xn} and \eqref{zkkz}, we obtain that 
\begin{align}\nonumber
\frac1{W}\sum_{n} \left( |M_n|^{2} - |m|^2 \right) &= \frac{2}W\sum_n \re (\bar m x_n) 
  +\OO \left( N^{-\frac32\e_*} + N^{-\e^*} \im (z-\wt z)\right)\\\label{waccp}
 &= \frac{2}W\sum_n \re \left(\bar m x^1_n \right) +\OO \left( N^{-\frac32\e_*} + N^{-\e^*} \im (z-\wt z)\right).
\end{align}
 Summing \eqref{x1} over $i$, we get that (recall that we take $\b g=0$)
$$
 (1-m^2) \sum_i x^{1}_i :=  m^2  \sum_i (z_i-\wt z\, )
 -\zeta  m^3 \left(W+ 1\right)  = m^2 W (z - \wt z) -\zeta  m^3 W+ {\rm O}(1),
$$
where we used that $\sum_i (S_0)_{ij}=1$ and $( \Sigma {\bf e}_1)_i = 1+ W^{-1}$ for $i\in\llbracket 1,W\rrbracket$. Thus for the second term in the second line of \eqref{waccp}, we have
 \begin{align}\label{631}
  \sum_n \re (\bar m x^1_n)&
= |m|^2W\re \left(\frac{(z-\wt z\, )m-\zeta m^2}{1-m^2}\right) +   {\rm O}(1) 
\nonumber \\
&=|m^2|W\left(\frac{\zeta}{2}-\frac{\im z-\im \wt z\, }{\sqrt{4-|\re \wt z|^{\,2}}}+\OO \left( N^{-\e^*} \im \wt z \,\right)\right) +   {\rm O}(1),
\end{align}
where we have used the following special properties of $m(\wt z+\ii 0^+)$ when $\wt z$ is a real number, in which case $m(\wt z+\ii 0^+)$ has unit modulus:
\be
\re \frac{m(a^+)} {1-m^2 (a^+)} = 0, \quad  \im \frac{m(a^+)}{1-m^2(a^+)}=\frac{1}{  \sqrt{4-a^2}},\quad \re  \frac{m^2(a^+)}{1-m^2(a^+)}=-\frac12,\quad |a|<2, \ \ a^+:=a+\ii 0^+.
\ee
Here the error $\OO \left( N^{-\e^*} \im \wt z \,\right)$ in \eqref{631} is due to $|m(\wt z)- m(\re \wt z + {\rm i} 0^+)| \le C \im \wt z $.
Inserting \eqref{631}  into \eqref{waccp}, we obtain that for some constant $c>0$,
$$ \frac1{W}\sum_{n} \left( |M_n|^{2} - |m|^2 \right)  \le -c (\im z-\im \wt z) + \zeta |m|^2 + \OO \left( N^{-\frac32\e_*} +  N^{- \e^*}\im \wt z \right),$$
which, together with \eqref{insert1}, proves  \eqref{xiao1}.
 \end{proof}

With Lemma \ref{lem63}, we now finish the proof of Lemma \ref{maxnorm}.

\begin{proof}[Proof of Lemma \ref{maxnorm}]\nc 
We first assume that $\b g=0$. With \eqref{gbzz2} and a perturbation argument, we can show that
$$\left \|\left (\frac{M^2S+\tau}{1+\tau}\right)^2\right \|_{L^\infty\to L^\infty}<  1-c$$
for some constant $c>0$. Then \eqref{tianYz} can be proved as in \eqref{M19}. 
Our main task is to prove \eqref{zlende}. Assume that  
\be\label{afafs}
(1-|M|^2S)  \bu^0=\bv^0
\ee
for some vectors $\bu^0,\bv^0\in \bR^N$. Multiplying \eqref{afafs}  with $\bu^0  |M|^{-2 }$ from the left and using the definition of $S$, we obtain that
\be\label{nhx}
\sum_i (|M_i|^{-2} -1)|\bu^0_i|^2+\sum_{1\le i\le W}  \zeta (1+W^{-1})|\bu^0_i|^2  + \frac12\sum_{i,j} S_{ij}\left(\bu^0_i-\bu^0_j\right)^2
= (\bu^0, |M|^{-2 }\bv^0).
\ee
We define a symmetric operator $H: L^2(\mathbb T)\mapsto L^2(\mathbb T)$, where $\mathbb T:=\llbracket-(\log N)^4 W, (\log N)^4 W \rrbracket$ and
$$H := H_{ 0}+H_{ 1},$$
with
$$H_0:  \ \ (\bu, H_0 \bv)=\frac14\sum_{  i,j \in \mathbb T} S_{ij}\left(\bu_i-\bu_j\right)\left(\bv_i-\bv_j\right),\quad \bu,\bv\in L^2(\mathbb T),$$ 
and
$$
H_1:  \ \ (H_1)_{ij}:= \delta_{ij}\left[(|M_i|^{-2} -1)+\zeta \, {\bf1}_{1\le i\le W}  (1+W^{-1})\right].$$
For any vector $\bu$, we denote by $\bu|_{\mathbb T}$ the restriction of $\bu$ to $L^2(\mathbb T)$. Then with \eqref{xiao} and the fact that $|m|\le 1$, we can rewrite \eqref{nhx} as
\be\label{key step}
(\bu^0|_{\mathbb T}, H\bu^0|_{\mathbb T})+ \frac14\sum_{i,j} S_{ij}\left(\bu^0_i-\bu^0_j\right)^2
\le (\bu^0, |M|^{-2 }\bv^0)+{\rm O}(N^{-10}) \|\bu^0\|^2_2 . 
\ee
 
First we claim that 
\be\label{adfa111}
H\ge c\im z(\log N)^{-4}
\ee
for some constant $c>0$. With Temple's inequality, we have the following estimate on the ground state energy of $H$:
\be\label{temple}
H\ge E_0(H)\;\ge\; \langle H \rangle_\phi-\frac{ \langle(H )^2\rangle_\phi-\langle H \rangle_\phi^2}{E_1(H )-\langle H \rangle_\phi},
\ee
for any $\phi\in L^2(\mathbb T)$ such that $ \|\phi\|_2=1$ and $\langle H \rangle_\phi<E_1(H)$, where $E_0(H)$ and $E_1(H)$ are the lowest two eigenvalues of $H$. Applying min-max principle to $H\ge H_0-\|H_{1 }\| _{L^2\to L^2} $, we obtain that
\be\label{minmax}
E_1(H) \;\ge\; E_1(H_{0 })- \|H_{1 }\| _{L^2\to L^2}.
\ee
By \eqref{xiao}, we have $\|H_{1 }\| _{L^2\to L^2}=\OO \left(|z-\wt z\, |+\zeta+\im \wt z\right)$. We then claim that
\be\label{spectral gap}
E_1(H_{0 }) \ge c(\log N)^{-13}
\ee
for some constant $c>0$. Recall that $ S\equiv S^{\zeta}= S_0 -\zeta \Sigma$ with 
$$\frac14\sum_{  i,j \in \mathbb T} \Sigma_{ij}\left(\bu_i-\bu_j\right)^2 \le 1,\quad \forall \bu \in L^2(\mathbb T), \ \ \|\bu\|_2=1.$$ 
Then again by min-max principle, it suffices to prove the following lemma.
\begin{lemma}\label{small lem}
For $s_{ij}$ satisfying \eqref{bandcw0}-\eqref{bandcw1}, there exists a constant $c>0$ such that
$$
\frac14\sum_{  i,j \in \mathbb T} s_{ij}\left(\bu_i-\bu_j\right)^2 \ge c(\log N)^{-13},\quad \forall \bu \in L^2(\mathbb T), \ \ \|\mathbf u\|_2 =1, \ \ \mathbf u \perp (1,1,\cdots,1).
$$
\end{lemma}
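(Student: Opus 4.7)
The plan is to establish a Poincar\'e-type inequality for the Dirichlet form $D(\mathbf u):=\tfrac14\sum_{i,j\in\mathbb T}s_{ij}(\mathbf u_i-\mathbf u_j)^2$ by a two-scale decomposition of $\mathbb T$. Partition $\mathbb T$ into $K\asymp(\log N)^4$ consecutive blocks $I_1,\ldots,I_K$ of equal length $\asymp W$, and set $\bar{\mathbf u}_k:=|I_k|^{-1}\sum_{i\in I_k}\mathbf u_i$. Since $\mathbf u\perp(1,\ldots,1)$ and $\|\mathbf u\|_2=1$, the orthogonal identity
\begin{equation*}
1=\|\mathbf u\|_2^2=\sum_k\sum_{i\in I_k}(\mathbf u_i-\bar{\mathbf u}_k)^2+\sum_k|I_k|\,\bar{\mathbf u}_k^2
\end{equation*}
splits the proof into a fluctuation piece and a macroscopic piece, and it is enough to bound each by a polylogarithmic multiple of $D(\mathbf u)$.

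For the fluctuation piece, any two indices $i,j$ lying in a common block satisfy $|i-j|\le W$, so \eqref{bandcw1} gives $s_{ij}\ge c_s W^{-1}$. Jensen applied to $\mathbf u_i-\bar{\mathbf u}_k=|I_k|^{-1}\sum_{j\in I_k}(\mathbf u_i-\mathbf u_j)$, summed over $i\in I_k$ and over $k$, gives $\sum_k\sum_{i\in I_k}(\mathbf u_i-\bar{\mathbf u}_k)^2\le CD(\mathbf u)$ \emph{with no logarithmic loss}, since the factor $|I_k|^{-1}\asymp W^{-1}$ from Jensen exactly cancels the $W$ produced by inverting $s_{ij}\ge c_s W^{-1}$. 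For the macroscopic piece, the same Jensen argument applied to the difference of adjacent block averages $\bar{\mathbf u}_k-\bar{\mathbf u}_{k+1}=(|I_k||I_{k+1}|)^{-1}\sum_{i\in I_k,j\in I_{k+1}}(\mathbf u_i-\mathbf u_j)$ produces $\sum_k(\bar{\mathbf u}_k-\bar{\mathbf u}_{k+1})^2\le CW^{-1}D(\mathbf u)$. With equal block sizes the constraint $\mathbf u\perp\mathbf 1$ becomes $\sum_k\bar{\mathbf u}_k=0$, and applying the standard sharp one-dimensional discrete Poincar\'e inequality on $\{1,\ldots,K\}$ (sharp constant of order $K^2$) yields $\sum_k\bar{\mathbf u}_k^2\le CK^2W^{-1}D(\mathbf u)$; multiplying by $|I_k|\asymp W$ then controls the macroscopic piece by $CK^2 D(\mathbf u)\le C(\log N)^8 D(\mathbf u)$.

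Combining the two estimates gives $1\le C(\log N)^8 D(\mathbf u)$, so $D(\mathbf u)\ge c(\log N)^{-8}$, which is in fact stronger than the required $c(\log N)^{-13}$. No step is a real obstacle: the one point requiring care is using the \emph{sharp} $O(K^2)$ constant in the coarse-grained Poincar\'e (a weaker $K^4$ bound would still suffice for $(\log N)^{-13}$). A small technicality is that the last block may be slightly shorter, breaking $\sum_k\bar{\mathbf u}_k=0$; this is absorbed into a weighted one-dimensional Poincar\'e inequality with weights $|I_k|\asymp W$, which changes no constants in the final scaling.
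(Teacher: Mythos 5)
Your argument is correct, and it is genuinely different from the paper's proof. The paper treats $\mathbb T$ as a perturbed torus: it writes the Dirichlet form as $F_0+F_1$, where $F_0$ is a small multiple (weight $(\log N)^{-5}$) of the \emph{periodic} nearest-$W$-neighbour Laplacian on $\mathbb T$ and $F_1$ is the remainder; it diagonalizes $F_0$ by Fourier modes, finding a spectral gap $\gtrsim (\log N)^{-5}\cdot W^2/|\mathbb T|^2 \asymp (\log N)^{-13}$, and then shows $F_1\ge 0$ by a chaining argument that absorbs the artificial wrap-around couplings. Your two-scale block decomposition dispenses with the periodization and the Fourier step altogether: the variance splits cleanly into a within-block piece, killed at order $1$ by the local $s_{ij}\gtrsim W^{-1}$ lower bound and Jensen, and a block-average piece, killed at order $K^2\asymp(\log N)^8$ by the coarse one-dimensional Poincar\'e inequality. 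As a result you avoid the extra $(\log N)^{-5}$ weight the paper inserts and get the sharper gap $c(\log N)^{-8}$; each approach buys something (the paper's positivity lemma $F_1\ge 0$ is reused as \eqref{spectral gap} in the Temple argument, while your version is more self-contained and tighter).

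One point should be made explicit. For the macroscopic step you expand $\bar{\mathbf u}_k-\bar{\mathbf u}_{k+1}$ by Jensen over \emph{all} pairs $i\in I_k$, $j\in I_{k+1}$ and then invoke $s_{ij}\gtrsim W^{-1}$; but \eqref{bandcw1} only provides this lower bound when $|i-j|\le W$. If the blocks have length $\asymp W$ (rather than $\le W/2$), a pair with $i$ at the far end of $I_k$ and $j$ at the far end of $I_{k+1}$ has $|i-j|$ up to $2W$, where $s_{ij}$ may vanish, and you would need an extra chaining step exactly of the type the paper uses. Taking block length $W/2$ removes the issue at no cost: you still have $K\asymp(\log N)^4$ blocks, the fluctuation-piece cancellation survives, and the weighted discrete Poincar\'e inequality that you already use to handle the shorter final block goes through unchanged.
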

We postpone its proof until we finish the proof of Lemma \ref{maxnorm}. We now choose the trial state $\phi$ as a constant vector in \eqref{temple}, i.e., 
 $$\phi_0=\frac{1}{\sqrt{|\mathbb T|}}(1,1,\cdots,1).$$  
Then by definition, $H_{0}\phi_0={\bf 0} $ and $\langle H \rangle_{\phi_0} \le \|H_{1 }\| _{L^2\to L^2} \ll E_1(H)$ by \eqref{minmax} and \eqref{spectral gap}. Then by \eqref{temple} and \eqref{minmax}, we have
\be\label{temple2}
H\;\ge\; \langle H_{1}\rangle_{\phi_0}-\frac{\|H_{1 }\|^2_{L^2\to L^2}}{E_1(H_{ })-\langle H_{1}\rangle_{\phi_0}}  \;\ge\;  
\langle H_{1}\rangle_{\phi_0}-\frac{\|H_{1 }\|^2_{L^2\to L^2}}{E_1(H_{0 })-2\|H_{1 }\| _{L^2\to L^2} }. 
 \ee
By the definition of $H_1$, we have
\begin{align*}
\langle H_{1}\rangle_{\phi_0}&=\frac{1}{|\mathbb T|}\sum_{n\in \mathbb T}\left[(|M_n|^{-2} -1)+\zeta \, {\bf1}_{n\in \llbracket 1, W\rrbracket}  (1+W^{-1})\right]\\
&= \frac{1}{|\mathbb T|}\sum_{n\in \mathbb T}(1-|m|^2)|M_n|^{-2} +\frac{1}{|\mathbb T|}\sum_{n\in \mathbb T}(|m|^2|M_n|^{-2} -1)+ \frac{\zeta(W+1)}{|\mathbb T|} \\
& \ge c\im \wt z + {\rm O}(N^{-10}) +\frac{1}{|\mathbb T|}\sum_{n\in \mathbb Z_N}(|m|^2|M_n|^{-2} -1)+\frac{\zeta(W+1)}{|\mathbb T|} \\
& \ge c\im z(\log N)^{-4} + \OO \left( N^{-\frac32\e_*} + N^{-\e^*}\im z\right),
\end{align*}
where we used \eqref{xiao} and $|m|^2 \le 1 - c\im \wt z$ (by \eqref{m is bounded}) in the third step, and 
\eqref{xiao1} in the last step.
Together with \eqref{temple2}, $\|H_{1 }\|^2_{L^2\to L^2}={\rm O}( N^{-\frac32\e_*} + N^{-\e^*}\im z)$ and \eqref{spectral gap}, this proves \eqref{adfa111}.

 With \eqref{adfa111}, \eqref{key step} gives that for some $c>0$, 
$$
c\im z(\log N)^{-4} \sum_{i\in \mathbb T}|\bu^0_i|^2 +\frac14\sum_{i,j} S_{ij}\left(\bu^0_i-\bu^0_j\right)^2
\le (\bu^0, |M|^{-2 }\bv^0)+{\rm O}(N^{-10}) \|\bu^0\|^2_2 .
$$
Now for some fixed $ i_0\in \Z_N$, we choose $\bv^0= S{\bf e}_{i_0}$. Then the above inequality becomes 
\be\label{yjzzex}
c\im z(\log N)^{-4} \sum_{i\in \mathbb T}|\bu^0_i|^2 +\frac14\sum_{i,j} S_{ij}\left(\bu^0_i-\bu^0_j\right)^2
\le (S|M|^{-2 }\bu^0)_{i_0} +{\rm O}(N^{-10}) \|\bu^0\|^2_2 .
\ee
In the following, we suppose $\|\bu^0\|_\infty \gg W^{-1}$, otherwise the proof is done. Since for any $i\in \Z_N$,
\be\label{u0eqn}
(\bu^0- |M|^2S   \bu^0)_i = (S{\bf e}_{i_0})_i ={\rm O}(W^{-1}), 
\ee
we must have 
$$\|\bu^0\|_\infty \asymp \|S\bu^0\|_\infty.$$  
Now we decompose $\bu^0$ as follows:  
$$
\bu^0_i =u+ \wt \bu_i, \quad \text{with } u=\frac1N\sum_{i\in \Z_N}\bu^0_i , \ \ \sum_i\wt \bu_i=0 .
$$
Suppose  $|u|\ge 10 \| \wt \bu\|_\infty$, then we have 
$$
\max_i |\bu^0_i| \le 2\min _i |\bu^0_i|  .
$$
Together with \eqref{yjzzex}, it implies that if $| u|\ge 10 \| \wt \bu\|_\infty$, then 
\be\label{part1}
\|\bu^0\|_\infty\le  2|u| \le C(W{  \im z})^{-1}.
\ee
On the other hand, if $| u|\le 10 \| \wt \bu\|_\infty$, with \eqref{afafs}, \eqref{xiao} and the definition of $S$ in Definition \ref{defHg}, we get that
\be \label{u0eqn2}
\wt \bu - |M|^2S   \wt \bu=\OO \left(W^{-1}+(\zeta+|z-\wt z|) |u|\right).
\ee
Then in this case, with \eqref{u0eqn} and \eqref{u0eqn2} it is easy to see that
\be\label{asdfjyyy}
 \|\bu^0\|_\infty \asymp \|S\bu^0\|_\infty \asymp  \|\wt \bu \|_\infty \asymp \|S\wt \bu\|_\infty \asymp  \|S_0\wt \bu\|_\infty.
\ee
By \eqref{sumsone}, we have 
$$
\sum_j (S_0\wt \bu\,)_j=0,
$$
which implies
\be\label{max diff}
\|S_0  \wt \bu \|_\infty \le \max_{i,j}\left|(S_0  \wt \bu \,)_j -(S_0  \wt \bu \,)_i\right| .
\ee
 Using (\ref{sumsone}), for fixed $i\le j\in \Z_N$ we have
\be\label{scwzjl}
\left|(S_0  \wt \bu \,)_j -(S_0  \wt \bu \,)_i\right| ^2 = \left|\sum_{x,y} (S_0)_{ix}(S_0)_{jy}(\wt \bu _{x}-  \wt \bu _{y})\right| ^2\le \sum_{x,y}(S_0)_{ix}(S_0)_{jy} |  \wt \bu _{x}-  \wt \bu _{y}|^2.
\ee

The lower bound in \eqref{bandcw1} shows that $S_0$ has a core, i.e., there is a constant $c_s>0$ such that 
$(S_0)_{xy}\ge c_sW^{-1}$ if $|x-y|\le W$. 
Then for any fixed $i\le j\in \Z_N$,
we choose $x_0, x_1, x_2, \cdots ,x_n$ for some $n={\rm O}(N/W)$ such that 
$$
i=x_0\le x_1\le x_2 \le \cdots \le x_{n-1}\le x_n=j, \quad \text{ with }\ \    W/3\le |x_k-x_{k+1}|\le W/2, \ \forall k. 
$$
Furthermore, set $x'_0=x$ and $x'_n=y$. Clearly for any choices of $x'_k$, $1\le k\le n-1$, we have
$$
    \wt \bu  _y -    \wt \bu  _x=\sum_{k=1}^n  \left( \wt \bu  _{x'_k} -    \wt \bu  _{x'_{k-1}}\right) \Rightarrow | \wt \bu  _y -    \wt \bu  _x|^2\le \frac {CN}W \sum_{k=1}^n \left|    \wt \bu  _{x'_k} -    \wt \bu  _{x'_{k-1}}\right|^2 .
$$
For our {\rm goal}, we will choose  $x'_k$'s such that
 $$
 x'_k \in \llbracket x_k-W/4, x_k+W/4\rrbracket, \quad 1\le k \le n-1. 
 $$  
Taking averaging over all $x'_k$, $1\le k \le n-1$, in the above regions,  we get that 
 $$
|    \wt \bu  _y -    \wt \bu  _x|^2\le \frac NW \left(\text{Average}_{x'_1, x'_2, \cdots, x'_{n-1}}\right) \sum_{k=1}^n \left|    \wt \bu  _{x'_k} -    \wt \bu  _{x'_{k-1}}\right|^2.
$$
Note that by our choices, we always have $ |x'_k-x'_{k-1}|\le W$ and $S_{x'_k x'_{k-1}}\ge \frac12 c_s W^{-1}$ for $2\le k \le n-1$, which gives that
\begin{align*}
\text{Average}_{x'_{k-1}, x'_k} \left| \wt \bu  _{x'_k} -    \wt \bu  _{x'_{k-1}}\right|^2 & \le \frac{4}{W^2} \sum_{x_k' ,x_{k-1}'\in  \llbracket x_{k-1} -W/4, x_k+W/4\rrbracket} \left|    \wt \bu  _{x'_k} -    \wt \bu  _{x'_{k-1}}\right|^2 \\
& \le \frac{8c_s^{-1}}{W} \sum_{x_k' ,x_{k-1}'\in  \llbracket x_{k-1} -W/4, x_k+W/4\rrbracket}S_{x'_k x'_{k-1}}\left|    \wt \bu  _{x'_k} -    \wt \bu  _{x'_{k-1}}\right|^2.
\end{align*}
Together with \eqref{scwzjl}, we get that for some constant $C>0$,
\begin{align*}
\left|(S_0  \wt \bu \,)_j -(S_0  \wt \bu \,)_i\right| ^2 & \le \sum_{x,y}(S_0)_{ix}(S_0)_{jy} \left[\frac{CN}{W^2}\sum_{k=2}^{n-1} \sum_{x_k',x_{k-1}' \in \llbracket x_{k-1}-W/4, x_k+W/4\rrbracket}S_{x'_k x'_{k-1}}\left|    \wt \bu  _{x'_k} -    \wt \bu  _{x'_{k-1}}\right|^2 \right]\\
& + \sum_{x,y}(S_0)_{ix}(S_0)_{jy}  \frac{CN}{W}\left[\frac{2}{W} \sum_{x':|x'-x_1| \le W/4} \left|    \wt \bu  _{x'} -    \wt \bu  _{x}\right|^2 + \frac{2}{W} \sum_{y':|y'-x_{n-1}| \le W/4}\left|    \wt \bu  _{y} -    \wt \bu  _{y'}\right|^2 \right] .
\end{align*}
For the first term on the right-hand side, we have
$$\sum_{k=2}^{n-1} \sum_{x_k',x_{k-1}' \in \llbracket x_{k-1}-W/4, x_k+W/4\rrbracket}S_{x'_k x'_{k-1}}\left|    \wt \bu  _{x'_k} -    \wt \bu  _{x'_{k-1}}\right|^2 \le C\sum_{ k,l\in \Z_N}S_{kl}\left(\wt \bu _k-\wt \bu _l\right)^2 .$$
For the terms in the second line, we notice that
$$|x'-x| \le |x'-x_1| + |x_1-i| + |i-x| \le C_sW+W$$
for all $x'$ such that $|x'-x_1| \le W/4$, where $C_s$ is the constant appeared in \eqref{bandcw1}. Then we can subdivide the interval $\llbracket x,x' \rrbracket$ or $\llbracket x',x\rrbracket$ into subintervals with lengths $\le W/2$, and proceed as above to get
$$ \sum_x (S_0)_{ix}\frac{2}{W} \sum_{|x'-x_1| \le W/4}\left|    \wt \bu  _{x'} -    \wt \bu  _{x}\right|^2 \le \frac{C}{W}\sum_{1\le k,l\le N}S_{kl}\left(\wt \bu _k-\wt \bu _l\right)^2  $$
for some constant $C>0$ that is independent of the choice of $x'$. In sum, we have obtained that
$$
\left|(S_0  \wt \bu \,)_j -(S_0  \wt \bu \,)_i\right| ^2
\le \frac{CN}{W^2}  \sum_{1\le k,l\le N}S_{kl}\left(\wt \bu _k-\wt \bu _l\right)^2 
= \frac{CN}{W^2}  \sum_{1\le k,l\le N}S_{kl}\left( \bu _k^0- \bu _l^0\right)^2 .$$
Then from \eqref{asdfjyyy} and \eqref{max diff}, we obtain that
$$\|\bu^0\|_\infty^2 \le \frac{CN}{W^2}  \sum_{1\le k,l\le N}S_{kl}\left( \bu _k^0- \bu _l^0\right)^2.$$
Plugging it into \eqref{yjzzex}, we get that if  $| u|\le 10 \| \wt \bu\|_\infty$, then
\be\label{part2} 
\frac{W^2}{N}\|\bu^0\|_\infty^2 \le C \sum_{1\le k,l\le N}S_{kl}\left( \bu _k^0- \bu _l^0\right)^2 \le C\|\bu^0\|_\infty  +{\rm O}(N^{-10}) \|\bu^0\|^2_2\Rightarrow \|\bu^0\|_\infty\le \frac{C N}{W^{2}}. \ee
In sum, by our choice of $\bv^0= S{\bf e}_{i_0}$ and \eqref{afafs}, we obtain from \eqref{part1} and \eqref{part2} that  
$$
\left\|\left(1-S|M|^2\right)^{-1}S\right\|_{\max} \le C\left( \frac{1}{W\im z}+\frac{N}{W^2}\right),
$$
which completes the proof of \eqref{zlende} in the case with $\b g =0$.

Given any $\b g\in \mathbb R^N$ such that $\|\b g\|_\infty \le W^{-3/4}$, we can write 
$$M_\zeta^{\b g} = M_\zeta^{\b 0} + \mathcal E,$$
where $\mathcal E$ is a diagonal matrix with $\max_i |\mathcal E_{ii}| = {\rm O}(\|\b g\|_\infty) ={\rm O}( W^{-3/4})$ by the Lipschitz continuity estimate  \eqref{dozy}. Then \eqref{tianYz} can be obtained by combing \eqref{tianYz} in the case $\b g=0$ with a standard perturbation argument. For \eqref{zlende}, we write
\be\label{perturb}
\left(1-S|M_\zeta^{\b g}|^2\right)^{-1}S = \left(1-S|M_\zeta^{\b 0}|^2\right)^{-1}S + \left(1-S|M_\zeta^{\b 0}|^2\right)^{-1}S (|M_\zeta^{\b g}|^2-|M_\zeta^{\b 0}|^2)\left(1-S|M_\zeta^{\b g}|^2\right)^{-1}S. 
\ee
Using \eqref{zlende} in the case $\b g=0$ and the bound
$$\left\| \left(1-S|M_\zeta^{\b 0}|^2\right)^{-1}S\right\|_{L^\infty \to L^\infty} \le N\left\| \left(1-S|M_\zeta^{\b 0}|^2\right)^{-1}S\right\|_{\max}, $$
we get from \eqref{perturb} that
$$\left\|\left(1-S|M_\zeta^{\b g}|^2\right)^{-1}S\right\|_{\max} \le \left\|\left(1-S|M_\zeta^{\b 0}|^2\right)^{-1}S\right\|_{\max} +  {\rm O} \left( \left( \frac{N}{W\im z}+\frac{N^2}{W^2}\right)W^{-3/4}\right).\left\|\left(1-S|M_\zeta^{\b g}|^2\right)^{-1}S\right\|_{\max}.$$
Together with \eqref{jxw0}, this implies \eqref{zlende} for any $\b g$ such that $\|\b g\|_\infty \le W^{-3/4}$.
\end{proof}
 
\begin{proof}[Proof of Lemma \ref{small lem}]
Since the matrix $S_0=(s_{ij})$ has a core by \eqref{bandcw1}, it suffices to prove that 
\be\label{core}
\sum_{  i ,j \in \mathbb T} \hat s_{ij}\left(\bu_i-\bu_j\right)^2 \ge c(\log N)^{-13},\quad \forall \bu \in L^2(\mathbb T), \ \ \|\mathbf u\|_2 =1, \ \ \mathbf u \perp (1,1,\cdots,1),
\ee
where 
$$\hat s_{ij}:=\frac1W\mathds{1}_{|i-j|\le W}.$$
Then we define the following two symmetric operators $F_{0,1}: L^2(\mathbb T)\mapsto L^2(\mathbb T)$ such that for any $\bu,\bv\in L^2(\mathbb T)$,
\begin{align*}
(\bu, F_0 \bv)=\frac1{W(\log N)^5}\sum_{  i,j \in \mathbb T, |i-j|_{\mathbb T} \le W} \left(\bu_i-\bu_j\right)\left(\bv_i-\bv_j\right), 
\end{align*}
where $|\cdot |_{\mathbb T}$ denotes the periodic distance on $\mathbb T$, and
\begin{align*}
(\bu, F_1 \bv)= \sum_{i,j \in \mathbb T} \tilde s_{ij}\left(\bu_i-\bu_j\right)\left(\bv_i-\bv_j\right) , \quad \tilde s_{ij}:= \hat s_{ij}- \frac{1}{W(\log N)^5} \mathds{1}_{|i-j|_{\mathbb T}\le W}.
\end{align*}

We first show that for some constant $c>0$,
\be\label{F0}
E_1(F_0) \ge c(\log N)^{-13},
\ee
 where $E_1(F_0)$ denotes the second lowest eigenvalue of $F_0$. Without loss of generality, we can regard $F_0$ as an operator on $L^2(\mathbb T,\mathbb C)$ consisting of {\it complex} $L^2$ vectors. Since $F_0$ is a periodic operator on $L^2(\mathbb T,\mathbb C)$, its eigenvectors are the unit complex vectors with Fourier components:
$$\mathbf w_p:\quad (\mathbf w_p)_k := \frac1{\sqrt{|\mathbb T|}}e^{\ii p k},  \quad k\in \mathbb T, \quad \text{with}\quad p=\frac{2\pi n}{|\mathbb T|}, \ \ n\in \mathbb T .$$
Then for any $p\ne 0$, we have 
\begin{align*}
(\mathbf w_p, F_0 \mathbf w_p)&=\frac{1}{W(\log N)^5}\sum_{  |k-l|_{\mathbb T} \le W} \left|(\mathbf w_p)_k-(\mathbf w_p)_l\right|^2  = \frac{1}{|\mathbb T|W(\log N)^5} \sum_{  |k-l|_{\mathbb T} \le W}\left[2-2\cos(p(k-l))\right]  \\
&=\frac{1}{W(\log N)^5} \sum_{  |n| \le W}\left[2-2\cos(pn)\right] \ge \frac{c}{W(\log N)^5}\frac{W^3}{|\mathbb T|^2} \ge c(\log N)^{-13}. 
\end{align*}
This proves \eqref{F0}.

We now show that $F_1$ defines a positive operator. For simplicity of notations, we let $L=|\mathbb T|$ and shift  $\mathbb T$ to $\mathbb T:=\llbracket 1,L\rrbracket$. Then $\tilde s_{ij}$ can be written as
\begin{align}\label{tilde sij}
\quad \tilde s_{ij}=\left(1-(\log N)^{-5}\right)\hat s_{ij}  - \frac1{W(\log N)^5}\left(\mathds{1}_{ 1\le i \le W, L-W+i \le j \le L} + \mathds{1}_{ 1\le j \le W, L-W+j \le i \le L}\right) .
\end{align}
Fix any $\mathbf u\in L^2(\mathbb T)$. The following proof is very similar to the one below \eqref{scwzjl}, so we shall omit some details. For any fixed $1\le i\le W$ and $L-W\le j \le L$, we choose $x_0, x_1, \cdots, x_n$ for some $n={\rm O}((\log N)^4)$ such that 
$$
i=x_0\le x_1\le x_2 \le \cdots \le x_{n-1}\le x_n=j, \quad \text{ with }\ \    W/3\le |x_k-x_{k+1}|\le W/2, \ \forall k. 
$$
Moreover, we set $x'_0=i$ and $x'_n=j$. Then we can get as before that  
 $$
| \bu  _i -    \bu _j |^2\le C(\log N)^4 \left(\text{Average}_{x'_1, x'_2, \cdots, x'_{n-1}}\right) \sum_{k=1}^n \left|    \bu  _{x'_k} -  \bu  _{x'_{k-1}}\right|^2,
$$
where we took average over all $x'_k\in \llbracket x_k-W/4, x_k+W/4\rrbracket$, $1\le k \le n-1$.
Note that by our choices, we always have $ |x'_k-x'_{k-1}|\le W$ and $\hat s_{x'_k x'_{k-1}}= W^{-1}$ for $1\le k \le n$, which gives that
\begin{align*}
&\, \frac1{W(\log N)^5}\sum_{1\le i \le W, L-W\le j \le L}\left|\bu_i - \bu_j\right| ^2  \\
\le &\, \frac1{W(\log N)^5}\sum_{1\le i \le W, L-W\le j \le L} \left[\frac{C(\log N)^4}{W}\sum_{k=2}^{n-1} \sum_{x_k',x_{k-1}' \in \llbracket x_{k-1}-W/4, x_k+W/4\rrbracket}\hat s_{x'_k x'_{k-1}}\left| \bu  _{x'_k} - \bu  _{x'_{k-1}}\right|^2 \right]\\
+ &\, \frac1{W(\log N)^5} \sum_{1\le i \le W, L-W\le j \le L} C(\log N)^4\left[ \sum_{x:|x-x_1| \le W/4} \hat s_{x i}\left|    \bu  _{x} -  \bu  _{i}\right|^2 + \sum_{y:|y-x_{n-1}| \le W/4} \hat s_{jy}\left|  \bu  _{y} -  \bu  _{j}\right|^2 \right] \\
\le &\, C(\log N)^{-1} \sum_{ k,l\in \mathbb T}\hat s_{kl}\left(\bu _k-\bu _l\right)^2 .
\end{align*}
Then by \eqref{tilde sij}, it is easy to see that $F_1$ is a positive operator. Thus by min-max principle we have
 $$E_1(F_0 + F_1)\ge E_1(F_0),$$
 which proves \eqref{core} together with \eqref{F0}.
\end{proof}

\begin{bibdiv}
\begin{biblist}

\bib{BouErdYauYin2017}{article}{
      author={Bourgade, P.},
      author={Erd{\H o}s, L.},
      author={Yau, H.-T.},
      author={Yin, J.},
       title={Universality for a class of random band matrices},
        date={2017},
     journal={Advances in Theoretical and Mathematical Physics},
      volume={21},
      number={3},
       pages={739\ndash 800},
}

\bib{PartI}{article}{
      author={Bourgade, P.},
      author={Yau, H.-T.},
      author={Yin, J.},
       title={Random band matrices in the delocalized phase, {I}: Quantum
  unique ergodicity and universality},
        date={2018},
     journal={in preparation},
}

\bib{EKY_Average}{article}{
      author={Erd{\H o}s, L.},
      author={Knowles, A.},
      author={Yau, H.-T.},
       title={Averaging fluctuations in resolvents of random band matrices},
        date={2013},
     journal={Ann. Henri Poincar\'e},
      volume={14},
       pages={1837\ndash 1926},
}

\bib{ErdKnoYauYin2013-band}{article}{
      author={Erd{\H{o}}s, L.},
      author={Knowles, A.},
      author={Yau, H.-T.},
      author={Yin, J.},
       title={Delocalization and diffusion profile for random band matrices},
        date={2013},
     journal={Comm. Math. Phys.},
      volume={323},
      number={1},
       pages={367\ndash 416},
}

\bib{ErdKnoYauYin2013}{article}{
      author={Erd{\H{o}}s, L.},
      author={Knowles, A.},
      author={Yau, H.-T.},
      author={Yin, J.},
       title={The local semicircle law for a general class of random matrices},
        date={2013},
     journal={Elect. J. Prob.},
      volume={18},
      number={59},
       pages={1\ndash 58},
}

\bib{EKYY2}{article}{
      author={Erd{\H o}s, L.},
      author={Knowles, A.},
      author={Yau, H.-T.},
      author={Yin, J.},
       title={Spectral statistics of {E}rd{\H{o}}s-{R}{\'e}nyi graphs {II}:
  Eigenvalue spacing and the extreme eigenvalues},
        date={2012},
     journal={Comm. Math. Phys.},
      volume={314},
       pages={587\ndash 640},
}

\bib{ESY_local}{article}{
      author={Erd{\H{o}}s, L.},
      author={Schlein, B.},
      author={Yau, H.-T.},
       title={Local semicircle law and complete delocalization for {W}igner
  random matrices},
        date={2008},
     journal={Commun. Math. Phys.},
      volume={287},
      number={2},
       pages={641\ndash 655},
}

\bib{ErdYauYin2012Univ}{article}{
      author={Erd{\H{o}}s, L.},
      author={Yau, H.-T.},
      author={Yin, J.},
       title={Bulk universality for generalized {W}igner matrices},
        date={2012},
     journal={Probab. Theory Related Fields},
      volume={154},
      number={1-2},
       pages={341\ndash 407},
}

\bib{isotropic}{article}{
      author={Knowles, A.},
      author={Yin, J.},
       title={The isotropic semicircle law and deformation of {W}igner
  matrices},
        date={2013},
     journal={Comm. Pure Appl. Math.},
      volume={66},
       pages={1663\ndash 1749},
}

\bib{PartIII}{article}{
      author={Yang, F.},
      author={Yin, J.},
       title={Random band matrices in the delocalized phase, {III}: Averaging
  fluctuations},
        date={2018},
     journal={in preparation},
}

\end{biblist}
\end{bibdiv}

  \end{document}